\theoremstyle{plain}
\newtheorem{thm}{Theorem}[section]
\newtheorem{cor}[thm]{Corollary}
\newtheorem{lem}[thm]{Lemma}
\newtheorem{prop}[thm]{Proposition}
\newtheorem{defn}[thm]{Definition}
\newtheorem{exa}[thm]{Example}
\newtheorem{rem}[thm]{Remark}
\begin{document}

\title [{{ On Properties of Graded Rings and Graded Modules }}]{ On Properties of Graded Rings and Graded Modules }

\author[{{M. Refai }}]{\textit{Mashhoor Refai }}

\address
{\textit{Mashhoor Refai, President of Princess Sumaya University for Technology, Jordan.}}
\bigskip
{\email{\textit{m.refai@psut.edu.jo}}}

 \author[{{R. Abu-Dawwas }}]{\textit{Rashid Abu-Dawwas }}

\address
{\textit{Rashid Abu-Dawwas, Department of Mathematics, Yarmouk
University, Jordan.}}
\bigskip
{\email{\textit{rrashid@yu.edu.jo}}}

 \subjclass[2010]{13A02, 16W50}

\date{}

\begin{abstract}  Let $R$ be a $G$-graded ring. In this article, we introduce two new concepts on graded rings, namely, weakly graded rings and invertible graded rings, and we discuss the relations between these concepts and several properties of graded rings. Also, we study the concept of weakly crossed products and study some properties defined on weakly crossed product and give the relationship between this new concept and several properties of graded rings. Moreover, in this article, we give a generalization for the concept of graded essential submodules, and introduce the concept of graded semi-uniform modules which is a generalization for the concept of graded uniform modules.
\end{abstract}

\keywords{ weakly graded rings, weakly crossed products, invertible graded rings, graded semi-essential submodules, graded essential submodules, graded uniform modules, graded semi-uniform modules.
 }
 \maketitle

 \section{Introduction}

 Let $G$ be a group with identity $e$ and $R$ be a ring with unity 1. Then $R$ is said to be $G$-graded ring if there exist additive subgroups $R_{g}$ of $R$ such that $R=\displaystyle\bigoplus_{g\in G}R_{g}$ and $R_{g}R_{h}\subseteq R_{gh}$ for all $g,h\in G$. A $G$-graded ring $R$ is denoted by $(R, G)$. The elements of $R_{g}$ are called homogeneous of degree $g$ and $R_{e}$ (the identity component of $R$) is a subring of $R$ and $1\in R_{e}$. For $x\in R$, $x$ can be written uniquely as $\displaystyle\sum_{g\in G}x_{g}$ where $x_{g}$ is the component of $x$ in $R_{g}$. The support of $(R, G)$ is defined by $supp(R,G)=\left\{g\in G:R_{g}\neq0\right\}$ and $h(R)=\displaystyle\bigcup_{g\in G}R_{g}$.

Let $R$ be a $G$-graded ring and $I$ an ideal of $R$. Then $I$ is said to be $G$-graded ideal if $I=\displaystyle\bigoplus_{g\in G}(I\cap R_{g})$, i.e., if $x\in I$ and $x=\displaystyle\sum_{g\in G}x_{g}$, then $x_{g}\in I$ for all $g\in G$. An ideal of a $G$-graded ring need not be $G$-graded, (see \cite{Dawwas}).

The concepts of faithful and non-degenerate graded rings have been introduced in \cite{Nastasescue}; $(R, G)$ is said to be faithful if for all $g, h\in G$, $a_{g}\in R_{g}-\{0\}$ we have $a_{g}R_{h}\neq\{0\}$ and $R_{h}a_{g}\neq\{0\}$. Clearly, if $(R, G)$ is faithful, then $supp(R, G)=G$. $(R, G)$ is said to be non-degenerate if for all $g\in G$, $a_{g}\in R_{g}-\{0\}$ we have $a_{g}R_{g^{-1}}\neq\{0\}$ and $R_{g^{-1}}a_{g}\neq\{0\}$. Otherwise, $(R, G)$ is said to be degenerate. Clearly, if $(R, G)$ is faithful, then $(R, G)$ is non-degenerate. However, the converse is not true in general and one can look in \cite{Nastasescue}.

Regular graded rings have been introduced in \cite{Nastasescue}; $(R, G)$ is said to be regular if for all $g\in G$, $a_{g}\in R_{g}-\{0\}$ we have $a_{g}\in a_{g}R_{g^{-1}}a_{g}$. It is easily to prove that if $(R, G)$ is regular, then $(R,G)$ is non-degenerate.

Strongly graded rings have been introduced in \cite{Nastasescue}; $(R,G)$ is said to be strong if $R_{g}R_{h}= R_{gh}$ for all $g,h\in G$. Clearly, if $(R,G)$ is strong, then $(R,G)$ is faithful. However, the converse is not true in general and one can look in \cite{Nastasescue}.

First strongly graded rings have been introduced and studied in \cite{Refai}; $(R,G)$ is said to be first strong if $1\in R_{g}R_{g^{-1}}$ for all $g\in supp(R,G)$. Clearly, if $(R,G)$ is strong, then $(R,G)$ is first strong. However, the converse is not true in general and one can look in \cite{Refai}.

Clearly, if $(R,G)$ is strong, then $supp(R,G)=G$. On the other hand, if $(R,G)$ is first strong, then $supp(R,G)$ is a subgroup of $G$. In fact, it has been proved that $(R,G)$ is first strong if and only if $supp(R,G)$ is a subgroup of $G$ and $R_{g}R_{h}= R_{gh}$ for all $g,h\in supp(R,G)$, (see \cite{Refai}).

Also, in \cite{Refai}, the concept of second strongly graded rings was introduced; $(R,G)$ is second strong if $supp(R,G)$ is a monoid in $G$ and $R_{g}R_{h}= R_{gh}$ for all $g,h\in supp(R,G)$. Clearly, if $(R,G)$ is strong, then $(R,G)$ is second strong. However, the converse is not true in general and one can look in \cite{Refai}. If $(R,G)$ is first strong, then $(R,G)$ is second strong. However, the converse is not true in general and one can look in \cite{Refai}.

In fact, if $(R,G)$ is second strong and $supp(R,G)$ is a subgroup of $G$, then $(R,G)$ is first strong. Moreover, $(R,G)$ is first strong if and only if $(R,G)$ is second strong and non-degenerate. Also, $(R,G)$ is strong if and only if $(R,G)$ is second strong and faithful, (see \cite{Refai}).

In this article, we introduce the concept of weakly graded rings; $(R,G)$ is said to be weak if whenever $g\in G$ with $R_{g}=\{0\}$, then $R_{g^{-1}}=\{0\}$. We discuss the relations between weakly graded rings and the concepts of strong, first strong, second strong, faithful, nondegenerate and regular graded rings, and then we give an analogous study to \cite{Dade}.

Also, we introduce the concept of invertible graded rings; $(R,G)$ is said to be invertible if the identity component $R_{e}$ is a field. We introduce an example on invertible graded rings which is not strong and an example on strongly graded rings which is not invertible. On the other hand, we prove that every invertible weakly graded domain is first strong. Several results are investigated, and then we study invertible graded rings as a vector space over $R_{e}$.

The concept of crossed product was introduced in \cite{Nastasescue}; $(R,G)$ is said to be crossed product if $R_{g}$ contains a unit for all $g\in G$. Clearly, if $(R,G)$ is a crossed product, then $supp(R,G)=G$. In this article, we study the concept of weakly crossed products (crossed products over the support) that was mentioned and used in \cite{Dawwas1}; $(R,G)$ is said to be weakly crossed product if $R_{g}$ contains a unit for all $g\in supp(R,G)$. We study some properties defined on weakly crossed product and give the relationship between this new concept and several properties of graded rings.

Let $M$ be a left $R$ - module. Then $M$ is a $G $-graded $R$-module if there exist additive subgroups $M_{g}$ of $M$ indexed by the elements $g\in G$ such that $M=\displaystyle\bigoplus_{g\in G}M_{g}$ and $R_{g}M_{h}\subseteq M_{gh}$ for all $g,h\in G$. The elements of $M_{g}$ are called homogeneous of degree $g$. If $x\in M$, then $x$ can be written uniquely as $\displaystyle\sum_{g\in G}x_{g}$, where $x_{g}$ is the component of $x$ in $M_{g}$. Clearly, $M_{g}$ is $R_{e}$-submodule of $M$ for all $g\in G$. Also, we write
$h(M)=\displaystyle\bigcup_{g\in G}M_{g}$ and $supp(M,G)=\left\{g\in G:M_{g}\neq0\right\}$.
Let $M$ be a $G$-graded $R$-module and $N$ be an $R$-submodule of $M$. Then $N$ is called $G$-graded $R$-submodule if $N=\displaystyle\bigoplus_{g\in
G}\left(N\bigcap M_{g}\right)$, i.e., if $x\in N$ and $x=\displaystyle\sum_{g\in G}x_{g}$, then $x_{g}\in N$ for all $g\in G$. Not all $R$-submodules of a $G$-graded $R$-module are $G$-graded, (see \cite{Dawwas} and \cite{Nastasescue}).

\begin{lem}(\cite{Farzalipour}) Let $R$ be a $G$-graded ring and $M$ be a $G$-graded $R$-module.

\begin{enumerate}

\item If $I$ and $J$ are graded ideals of $R$, then $I+J$ and $I\bigcap J$ are graded ideals of $R$.

\item If $N$ and $K$ are graded $R$-submodules of $M$, then $N+K$ and $N\bigcap K$ are graded $R$-submodules of $M$.

\item If $N$ is a graded $R$-submodule of $M$, $r\in h(R)$, $x\in h(M)$ and $I$ is a graded ideal of $R$, then $Rx$, $IN$ and $rN$ are graded $R$-submodules of $M$. Moreover, $(N:_{R}M)=\left\{r\in R:rM\subseteq N\right\}$ is a graded ideal of $R$.
\end{enumerate}
\end{lem}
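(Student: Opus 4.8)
The plan is to reduce everything to a single, repeatedly-used observation: a subset $P$ of $M$ that is already closed under the relevant operations is a graded submodule precisely when, for every $x \in P$, each homogeneous component $x_g$ of $x$ again lies in $P$. Equivalently, a submodule generated by homogeneous elements is graded, since each cyclic piece $Rx$ with $x \in h(M)$ is graded and (by the sum part) sums of graded submodules are graded. I would also record the elementary but essential fact that for a fixed $\lambda \in G$ the translations $g \mapsto \lambda g$ and $g \mapsto g\lambda$ are bijections of $G$; this guarantees that when I expand a product into homogeneous pieces, the piece of a prescribed degree is a single well-defined term, so no two contributions collide.

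For parts (1) and (2) I would argue the two cases in parallel, since a graded ideal is just a graded submodule of $R$ over itself. For the sum: given $x \in N + K$, write $x = n + k$ with $n \in N$, $k \in K$; then $x_g = n_g + k_g$, and gradedness of $N$ and $K$ gives $n_g \in N$, $k_g \in K$, so $x_g \in N+K$. For the intersection: given $x \in N \cap K$, gradedness of $N$ forces $x_g \in N$ and gradedness of $K$ forces $x_g \in K$, hence $x_g \in N \cap K$. Applying the component criterion finishes both.

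For part (3), $Rx$ with $x \in M_\lambda$ is handled directly: any $rx$ expands as $\sum_g r_g x$ with $r_g x \in M_{g\lambda}$, so the degree-$h$ component of $rx$ is $r_{h\lambda^{-1}}x \in Rx$. For $IN$ I would observe that $I$ and $N$ being graded let me write every generator $an$ ($a \in I$, $n \in N$) as a finite sum of products $a_g n_h$ of homogeneous elements drawn from $I \cap R_g$ and $N \cap M_h$; each such product is homogeneous of degree $gh$, so $IN$ is additively generated by homogeneous elements and is therefore graded. The case $rN$ ($r \in R_\lambda$) is the same idea: since $N$ is graded, $rN$ is additively generated by the homogeneous elements $rm$ with $m \in h(N)$; here I would also note closure under the $R$-action (using commutativity of $R$, as in the cited setting, so that $s(rn)=r(sn)\in rN$) in order to see that $rN$ is genuinely a submodule.

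Finally, for the colon ideal $(N:_R M)$ — the one step I expect to be slightly more than mechanical — I would first check it is an ideal by the usual computation, then take $r \in (N:_R M)$ and prove each $r_g \in (N:_R M)$. It suffices to show $r_g m \in N$ for every homogeneous $m \in M_h$. Expanding, $rm = \sum_g r_g m \in N$ with $r_g m \in M_{gh}$; since the degrees $gh$ are distinct as $g$ varies and $N$ is graded, the degree-$gh$ component of $rm$, namely $r_g m$, must lie in $N$. As $m$ ranges over homogeneous elements and every element of $M$ is a finite sum of these, $r_g M \subseteq N$, i.e. $r_g \in (N:_R M)$, which is exactly gradedness. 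The only real subtlety throughout is bookkeeping of degrees under the group translations so that components can be isolated, together with flagging the commutativity assumption needed for $rN$ to be a submodule.
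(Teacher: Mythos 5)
The paper does not prove this lemma at all---it is quoted from the literature with the citation \cite{Farzalipour}---so there is no in-paper argument to compare against; your proof is the standard one and is correct. Isolating homogeneous components via the bijectivity of the translations $g\mapsto g\lambda$ (for $Rx$, $rN$, and the colon ideal) and spanning $IN$ additively by the homogeneous products $a_g n_h$ is exactly how the cited source argues, and you were right to flag that $rN$ being a submodule uses commutativity of $R$, which holds in the setting of \cite{Farzalipour} though the present paper's rings are not assumed commutative.
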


In \cite{Farzalipour2}, it has been proved that if $N$ is a graded $R$-submodule of $M$, then $Ann(N)=\left\{r\in R:rN=0\right\}$ is a graded ideal of $R$.

A $G$-graded $R$-module $M$ is said to be strongly graded if $R_{g}M_{h}=M_{gh}$ for all $g,h\in G$. Clearly, $(R,G)$ is strong if and only if every graded $R$-module is strongly graded (see \cite{Nastasescue}). Also, $(M,G)$ is called first strong if $supp(R,G)$ is a subgroup of $G$ and $R_{g}M_{h}=M_{gh}$ for all $g\in supp(R,G)$, $h\in G$. Moreover, $(R,G)$ is first strong if and only if every graded $R$-module
is first strongly graded (see \cite{Refai Moh'd}).

Let $M$ be a $G$-graded $R$-module and $N$ be an $R$-submodule of $M$. Then $M/N$ may be made into a graded module by putting $(M/N)_{g}=(M_{g}+N)/N$ for all $g\in G$ (see \cite{Nastasescue}). In fact, the proof of the following will be a routine.

\begin{lem}\label{4} Let $M$ be a graded $R$-module, $K$ and $N$ be $R$-submodules of $M$ such that $K\subseteq N$. Then $N$ is a graded $R$-submodule of $M$ if and only if $N/K$ is a graded $R$-submodule of $M/K$.
 \end{lem}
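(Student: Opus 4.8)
The plan is to argue both implications directly from the definition of a graded submodule, using the quotient grading $(M/K)_{g}=(M_{g}+K)/K$ recalled just above together with the canonical projection $\pi\colon M\to M/K$, $\pi(x)=x+K$. The single fact I will use repeatedly is that $\pi$ carries the homogeneous decomposition in $M$ to a decomposition in $M/K$: if $x=\sum_{g}x_{g}$ with $x_{g}\in M_{g}$, then $\pi(x)=\sum_{g}\pi(x_{g})$ with $\pi(x_{g})\in(M/K)_{g}$, and this is precisely the $G$-decomposition of $\pi(x)$ in the graded module $M/K$. This follows at once from additivity and $R$-linearity of $\pi$ and the definition of the grading on $M/K$.

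For the forward implication I would assume $N$ is a graded $R$-submodule of $M$ and take an arbitrary element of $N/K$; it has the form $\pi(x)$ for some $x\in N$. Writing $x=\sum_{g}x_{g}$ with $x_{g}\in M_{g}$, gradedness of $N$ gives $x_{g}\in N$ for every $g$, hence $\pi(x_{g})\in N/K$. Since $\pi(x)=\sum_{g}\pi(x_{g})$ and each $\pi(x_{g})$ lies in $(M/K)_{g}\cap(N/K)$, this exhibits every element of $N/K$ as a sum of homogeneous components all of which lie in $N/K$; therefore $N/K$ is graded.

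For the converse I would assume $N/K$ is a graded $R$-submodule of $M/K$ and take $x\in N$ with homogeneous decomposition $x=\sum_{g}x_{g}$. Applying $\pi$, the element $\pi(x)\in N/K$ has $G$-decomposition $\pi(x)=\sum_{g}\pi(x_{g})$ with $\pi(x_{g})\in(M/K)_{g}$. Gradedness of $N/K$ forces $\pi(x_{g})\in N/K$ for each $g$, so $x_{g}+K=n+K$ for some $n\in N$, i.e. $x_{g}-n\in K$. Here the hypothesis $K\subseteq N$ is essential: it gives $x_{g}-n\in N$, and combined with $n\in N$ this yields $x_{g}\in N$ for all $g$. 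Hence $N$ is graded.

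The step I expect to require the most care is the converse, specifically the passage from $\pi(x_{g})\in N/K$ back to $x_{g}\in N$: a priori one only recovers $x_{g}$ modulo $K$, and it is precisely the containment $K\subseteq N$ that promotes this to genuine membership in $N$. The remaining point is merely the bookkeeping that the homogeneous components of $\pi(x)$ in $M/K$ are exactly the images $\pi(x_{g})$, which is immediate from $(M/K)_{g}=(M_{g}+K)/K$.
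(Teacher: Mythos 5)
Your proof is correct and is precisely the routine argument the paper has in mind: the paper in fact omits the proof entirely, remarking only that it ``will be a routine.'' Both directions proceed exactly as intended, via the projection $\pi\colon M\to M/K$ and the quotient grading $(M/K)_{g}=(M_{g}+K)/K$, and you correctly isolate the hypothesis $K\subseteq N$ as the step that upgrades $x_{g}+K\in N/K$ to genuine membership $x_{g}\in N$.
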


 Graded essential submodules have been introduced and studied in \cite{Nastasescue}, a nonzero graded $R$-submodule $K$ of $M$ is said to be graded essential if $K\bigcap N\neq\{0\}$ for every nonzero graded $R$-submodule $N$ of $M$. Also, graded essential submodules have been studied in  \cite{Ceken}.

Graded prime ideals have been introduced and studied in \cite{Refai Hailat Obiedat}; a proper graded ideal $B$ of a graded ring $R$ is said to graded prime if whenever $a, b\in h(R)$ such that $ab\in B$, then either $a\in B$ or $b\in B$. Graded prime submodules have been introduced and studied in \cite{Atani}, a proper graded $R$-submodule $P$ of $M$ is said to be graded prime if whenever $r\in h(R)$ and $m\in h(M)$ such that $rm\in P$, then either $m\in P$ or $r\in (P:_{R}M)$. Graded prime submodules have been studied by several authors, for example \cite{Abu-Dawwas4} and \cite{Abu-Dawwas3}. It has been proved in \cite{Atani} that if $N$ is a graded prime $R$-submodule of $M$, then $(N:_{R}M)$ is a graded prime ideal of $R$.

 In this article, we introduce and study the concept of graded semi-essential submodules, a nonzero graded $R$-submodule $K$ of $M$ is said to be graded semi-essential if $K\bigcap P\neq\{0\}$ for every nonzero graded prime $R$-submodule $P$ of $M$. Clearly, every graded essential submodule is graded semi-essential, we prove that the converse is not true in general. We prove that the intersection of two graded semi-essential submodules is not graded semi-essential in general, however, it will be true if one is graded essential submodule. Also, we introduce another case where the intersection of two graded semi-essential submodules will be graded semi-essential.

Let $N$ be a graded $R$-submodule of $M$. Then the graded radical of $N$ is denoted by $Grad_{M}(N)$ and it is defined to be the intersection of all graded prime submodules of $M$ containing $N$. If there is no graded prime submodule containing $N$, then we take $Grad_{M}(N)=M$.

Let $M$ and $M^{\prime}$ be two $G$-graded $R$-modules. An $R$-homomorphism $f:M\rightarrow M^{\prime}$ is said to be graded $R$-homomorphism if $f(M_{g})\subseteq M^{\prime}_{g}$ for all $g\in G$. One can prove that if $K$ is a graded $R$-submodule of $M$ and $L$ is a graded $R$-submodule of $M^{\prime}$, then $f(K)$ is a graded $R$-submodule of $M^{\prime}$ and $f^{-1}(L)$ is a graded $R$-submodule of $M$ (see \cite{Nastasescue}). Let $f:M\rightarrow M^{\prime}$ be a graded $R$-epimorphism and $K$ be a graded $R$-submodule of $M$ such that $Ker(f)\subseteq K$. Then there exists a one to one order preserving correspondence between the proper graded $R$-submodules of $M$ containing $K$ and the proper graded $R$-submodules of $M^{\prime}$ containing $f(K)$. Moreover, for any graded $R$-submodule $L$ of $M^{\prime}$, there exists a graded $R$-submodule $N$ of $M$ such that $Ker(f)\subseteq N$ and $f(N)=L$.

Based on analogous results for graded rings, the proof of the following will be routine.

\begin{lem}\label{6} Let $f:M\rightarrow M^{\prime}$ be a graded $R$-epimorphism and $K$ be a graded $R$-submodule of $M$ such that $Ker(f)\subseteq K$.
\begin{enumerate}
\item If $N$ is a graded prime $R$-submodule of $M$ containing $K$, then $f(P)$ is a graded prime $R$-submodule of $M^{\prime}$ containing $f(K)$.
\item If $L$ is a graded prime $R$-submodule of $M^{\prime}$ containing $f(K)$, then $f^{-1}(L)$ is a graded prime $R$-submodule of $M$ containing $K$.
\end{enumerate}
\end{lem}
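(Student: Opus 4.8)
The plan is to lean entirely on the facts already recorded just before the statement: for a graded $R$-epimorphism $f$, both $f(N)$ and $f^{-1}(L)$ are again graded $R$-submodules, the order-preserving correspondence between proper graded submodules containing $K$ (resp. $f(K)$) takes care of properness, and the colon operation $(-:_{R}-)$ transports across $f$ via surjectivity. What is left in each part is to verify the defining prime condition on homogeneous elements, and that is the only place the grading really enters.

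For (1), I first note that $f(N)$ is a graded $R$-submodule of $M^{\prime}$ containing $f(K)$ (since $K\subseteq N$), and that it is proper: because $Ker(f)\subseteq K\subseteq N$ one has $f^{-1}(f(N))=N$, so $f(N)=M^{\prime}$ would force $N=M$, contradicting that $N$ is proper. To check primeness, take $r\in h(R)$ and $m^{\prime}\in h(M^{\prime})$ with $rm^{\prime}\in f(N)$. The decisive step is to lift $m^{\prime}$ to a \emph{homogeneous} preimage: since $f$ is a graded epimorphism it restricts to a surjection $M_{g}\to M^{\prime}_{g}$ on each component, so I may choose $m\in h(M)$ with $f(m)=m^{\prime}$ and $\deg m=\deg m^{\prime}$. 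Then $rm\in h(M)$ and $f(rm)=rm^{\prime}\in f(N)$, whence $rm\in f^{-1}(f(N))=N$. Applying the graded prime hypothesis on $N$ to $rm$ yields either $m\in N$, giving $m^{\prime}=f(m)\in f(N)$, or $r\in(N:_{R}M)$; in the latter case $rM^{\prime}=f(rM)\subseteq f(N)$, so $r\in(f(N):_{R}M^{\prime})$.

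For (2), $f^{-1}(L)$ is a graded $R$-submodule of $M$ containing $K$ (as $K\subseteq f^{-1}(f(K))\subseteq f^{-1}(L)$), and it is proper since $f$ is surjective, so $f^{-1}(L)=M$ would give $L=f(M)=M^{\prime}$. For primeness take $r\in h(R)$ and $m\in h(M)$ with $rm\in f^{-1}(L)$; here no lifting is needed, because $f(m)\in h(M^{\prime})$ automatically and $f(rm)=rf(m)\in L$. The graded prime hypothesis on $L$ then gives $f(m)\in L$, i.e. $m\in f^{-1}(L)$, or $r\in(L:_{R}M^{\prime})$, in which case $f(rM)=rf(M)=rM^{\prime}\subseteq L$, so $rM\subseteq f^{-1}(L)$ and $r\in(f^{-1}(L):_{R}M)$.

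The only genuinely nontrivial point is the homogeneous lifting used in part (1); everything else is a formal transfer of the prime condition through $f$. I would therefore isolate the surjectivity of each restriction $f|_{M_{g}}:M_{g}\to M^{\prime}_{g}$ as the key observation: given $m^{\prime}\in M^{\prime}_{g}$, pick any preimage, write it in its homogeneous components, and use $f(M_{h})\subseteq M^{\prime}_{h}$ together with uniqueness of the decomposition in $M^{\prime}$ to conclude that the degree-$g$ component already maps onto $m^{\prime}$. With that in hand both parts close by the routine computations above, which is consistent with the remark preceding the statement that the proof is essentially the module analogue of the known results for graded rings.
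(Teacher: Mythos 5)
Your proof is correct, and it is precisely the routine verification the paper leaves implicit: the paper states this lemma without proof, remarking only that it follows routinely from the analogous results for graded rings and the correspondence of graded submodules recorded just before it. Your one genuinely substantive step---that a graded epimorphism restricts to a surjection $M_{g}\rightarrow M^{\prime}_{g}$ on each component, justified via uniqueness of homogeneous decompositions---is exactly the right key fact, and you have also silently corrected the paper's typo ($f(P)$ should read $f(N)$ in part (1)).
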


We study graded semi-essential submodules under homomorphisms.

In \cite{Escoriza}, a graded $R$-module $M$ is said to be graded multiplication if for every graded $R$-submodule $N$ of $M$, $N=IM$ for some graded deal $I$ of $R$. In this case, we can take $I=(N:_{R}M)$. Graded multiplication modules have been studied by several authors, for example, see \cite{Abu-Dawwa1}, \cite{Abu-Dawwas2} and \cite{Khaksari}. We prove that if $M$ is a graded multiplication faithful $R$-module and $K$ a graded $R$-submodule of $M$ such that $(K:_{R}M)$ is a graded semi-essential ideal of $R$, then $K$ is a graded semi-essential $R$-submodule of $M$.

The concept of graded uniform modules have been introduced and studied in \cite{Nastasescue}, a graded $R$ module $M$ is said to be graded uniform if every nonzero graded $R$-submodule of $M$ is graded essential. In this work, we introduce and study the concept of graded semi-uniform modules, a graded $R$ module $M$ is said to be graded semi-uniform if every nonzero graded $R$-submodule of $M$ is graded semi-essential. Clearly, every graded uniform module is graded semi-uniform, we prove that the converse is not true in general. It is known that the graded uniform property is hereditary, we prove that the graded semi-uniform property is not hereditary. We prove that if $R$ is a graded semi-uniform ring, then every graded multiplication faithful $R$-module is graded semi-uniform.

\section{Weakly Graded Rings}

In this section, we introduce the concept of weakly graded rings and discuss the relations between weakly graded rings and several properties of graded rings.

\begin{defn} Let $R$ be a $G$-graded ring. Then $(R,G)$ is said to be weak if whenever $g\in G$ with $R_{g}=\{0\}$, then $R_{g^{-1}}=\{0\}$.
\end{defn}

\begin{exa} The trivial graduation of a ring $R$ by a group $G$ is weak, that is $R_{e}=R$ and $R_{g}=\{0\}$ otherwise.
\end{exa}

\begin{exa}\label{3} Let $R=M_{2}(K)$ and $G=\mathbb{Z}_{4}$. Then $R$ is $G$-graded by

$R_{0}=\left(
         \begin{array}{cc}
           K & 0 \\
           0 & K \\
         \end{array}
       \right)$, $R_{2}=\left(
                          \begin{array}{cc}
                            0 & K \\
                            K & 0 \\
                          \end{array}
                        \right)$ and $R_{1}=R_{3}=\{0\}$. $(R,G)$ is first strong since $I\in R_{0}R_{0}$ and $I\in R_{2}R_{2}$ but $(R,G)$ is not strong since $R_{1}R_{3}=\{0\}\neq R_{0}$. Clearly, $(R, G)$ is weak.
\end{exa}

\begin{exa}\label{5} Let $R=K[x]$ (the ring of polynomials with coefficients from a field $K$) and $G=\mathbb{Z}$. Then $R$ is $G$-graded by $R_{j}=Kx^{j}$, $j\geq0$ and $R_{j}=\{0\}$ otherwise. $(R,G)$ is second strong but it is not first strong since $2\in supp(R,G)$ with $R_{2}R_{-2}=\{0\}\neq R_{0}$. $(R, G)$ is not weak since $R_{-1}=\{0\}$ but $R_{1}\neq\{0\}$.
\end{exa}

\begin{prop}\label{7} If $(R,G)$ is non-degenerate, then $(R,G)$ is weak.
\end{prop}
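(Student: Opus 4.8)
The plan is to prove the statement directly by unwinding the two definitions and using a short contradiction argument. Recall that $(R,G)$ is weak means: for every $g\in G$, if $R_{g}=\{0\}$ then $R_{g^{-1}}=\{0\}$. So I would fix an arbitrary $g\in G$ with $R_{g}=\{0\}$ and aim to conclude that $R_{g^{-1}}=\{0\}$.

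Suppose, toward a contradiction, that $R_{g^{-1}}\neq\{0\}$, and choose a nonzero homogeneous element $a_{g^{-1}}\in R_{g^{-1}}-\{0\}$. The key step is to apply the non-degeneracy hypothesis at the group element $g^{-1}$ rather than at $g$: since $(g^{-1})^{-1}=g$, non-degeneracy applied to $a_{g^{-1}}\in R_{g^{-1}}-\{0\}$ yields $a_{g^{-1}}R_{g}\neq\{0\}$. But $R_{g}=\{0\}$ forces $a_{g^{-1}}R_{g}=\{0\}$, which is the desired contradiction. Hence $R_{g^{-1}}=\{0\}$, and since $g$ was arbitrary, $(R,G)$ is weak.

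The only point requiring any care is matching the indices correctly when invoking non-degeneracy: the definition guarantees, for each homogeneous degree $h$ and each nonzero $a_{h}\in R_{h}$, that $a_{h}R_{h^{-1}}\neq\{0\}$, so one must read off the inverse degree at the chosen element's degree $h=g^{-1}$ in order to produce a product landing in the zero component $R_{g}$. There is no real obstacle here; the result is essentially immediate from the definitions once the contrapositive form of weakness is made explicit. (One could equally phrase the argument directly in contrapositive form: if $R_{g}=\{0\}$ but $R_{g^{-1}}\neq\{0\}$, then any nonzero $a_{g^{-1}}$ would already violate non-degeneracy at degree $g^{-1}$.)
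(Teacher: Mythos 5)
Your proof is correct and follows essentially the same route as the paper: assume $R_{g}=\{0\}$ but $R_{g^{-1}}\neq\{0\}$, pick a nonzero $a_{g^{-1}}\in R_{g^{-1}}$, and apply non-degeneracy at degree $g^{-1}$ to get $a_{g^{-1}}R_{g}\neq\{0\}$, contradicting $R_{g}=\{0\}$. Your explicit remark about matching indices (reading the definition at $h=g^{-1}$ so that the product lands in $R_{g}$) is exactly the step the paper performs implicitly, so nothing is missing.
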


\begin{proof} Let $g\in G$ with $R_{g}=\{0\}$. If $R_{g^{-1}}\neq\{0\}$, then there exists $a_{g^{-1}}\in R_{g^{-1}}-\{0\}$ and since $(R,G)$ is non-degenerate, $a_{g^{-1}}R_{g}\neq\{0\}$ and then $R_{g}\neq\{0\}$ a contradiction. So, $R_{g^{-1}}=\{0\}$ and hence $(R,G)$ is weak.
\end{proof}

The next example shows that the converse of Proposition \ref{7} is not true in general.

\begin{exa} Let $R=M_{4}(K)$ and $G=D_{10}=\left\langle a,b:a^{5}=b^{2}=e,ba=a^{-1}b\right\rangle$. Then $R$ is $G$-graded by

$R_{e}=\left(
         \begin{array}{cccc}
           K & 0 & 0 & 0 \\
           0 & K & 0 & 0 \\
           0 & 0 & K & 0 \\
           0 & 0 & 0 & K \\
         \end{array}
       \right)$, $R_{a}=\left(
                          \begin{array}{cccc}
                            0 & K & 0 & 0 \\
                            0 & 0 & K & 0 \\
                            0 & 0 & 0 & 0 \\
                            0 & 0 & 0 & 0 \\
                          \end{array}
                        \right)$, $R_{a^{2}}=\left(
                                               \begin{array}{cccc}
                                                 0 & 0 & K & 0 \\
                                                 0 & 0 & 0 & 0 \\
                                                 0 & 0 & 0 & 0 \\
                                                 0 & 0 & 0 & 0 \\
                                               \end{array}
                                             \right)$, $R_{a^{3}}=\left(
                                                                    \begin{array}{cccc}
                                                                      0 & 0 & 0 & 0 \\
                                                                      0 & 0 & 0 & 0 \\
                                                                      K & 0 & 0 & 0 \\
                                                                      0 & 0 & 0 & 0 \\
                                                                    \end{array}
                                                                  \right)$, $R_{b}=\left(
                                                                                     \begin{array}{cccc}
                                                                                       0 & 0 & 0 & 0 \\
                                                                                       0 & 0 & 0 & K \\
                                                                                       0 & 0 & 0 & 0 \\
                                                                                       0 & K & 0 & 0 \\
                                                                                     \end{array}
                                                                                   \right)$, $R_{ab}=\left(
                                                                                                       \begin{array}{cccc}
                                                                                                         0 & 0 & 0 & K \\
                                                                                                         0 & 0 & 0 & 0 \\
                                                                                                         0 & 0 & 0 & 0 \\
                                                                                                         K & 0 & 0 & 0 \\
                                                                                                       \end{array}
                                                                                                     \right)$, $R_{a^{4}b}=\left(
                                                                                                                             \begin{array}{cccc}
                                                                                                                               0 & 0 & 0 & 0 \\
                                                                                                                               0 & 0 & 0 & 0 \\
                                                                                                                               0 & 0 & 0 & K \\
                                                                                                                               0 & 0 & K & 0 \\
                                                                                                                             \end{array}
                                                                                                                           \right)$, $R_{a^{4}}=\left(
                                                                                                                                                  \begin{array}{cccc}
                                                                                                                                                    0 & 0 & 0 & 0 \\
                                                                                                                                                     K& 0 & 0 & 0 \\
                                                                                                                                                     0& K & 0 & 0 \\
                                                                                                                                                     0& 0 & 0 & 0 \\
                                                                                                                                                  \end{array}
                                                                                                                                                \right)$ and
 $R_{a^{2}b}=R_{a^{3}b}=\{0\}$. $(R,G)$ is weak but it is degenerate since $x_{ab}=\left( \begin{array}{cccc}
                                                           0 & 0 & 0 & 0 \\
                                                           0 & 0 & 0 & 0 \\
                                                           0 & 0 & 0 & 0 \\
                                                           1 & 0 & 0 & 0 \\
                                                         \end{array}
                                                       \right)\in R_{ab}-\{0\}$ with $x_{ab}R_{(ab)^{-1}}=x_{ab}R_{a^{4}b}=\{0\}$.
\end{exa}

Since every faithful graded ring is non-degenerate and every regular graded ring is non-degenerate, we have the next two results.

\begin{cor}\label{11} If $(R,G)$ is faithful, then $(R,G)$ is weak.
\end{cor}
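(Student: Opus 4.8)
The plan is to exploit the implication chain already assembled in the introduction together with Proposition \ref{7}, rather than to argue from scratch. The introduction records the standard fact (from \cite{Nastasescue}) that every faithful graded ring is non-degenerate: indeed, if $(R,G)$ is faithful then for every $g\in G$ and every $a_{g}\in R_{g}-\{0\}$ one has $a_{g}R_{h}\neq\{0\}$ and $R_{h}a_{g}\neq\{0\}$ for all $h\in G$, and specializing to $h=g^{-1}$ gives exactly the non-degeneracy condition. Proposition \ref{7} then supplies the remaining link, namely that non-degenerate implies weak.

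So the proof I would give is essentially a two-step composition: assume $(R,G)$ is faithful; conclude it is non-degenerate by the specialization above; and then invoke Proposition \ref{7} to conclude it is weak. There is no genuine obstacle here, which is precisely why the paper groups this result with a companion corollary under the single remark that faithful and regular graded rings are non-degenerate.

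If one prefers a self-contained argument that does not route through non-degeneracy, I would instead mirror the proof of Proposition \ref{7} directly: take $g\in G$ with $R_{g}=\{0\}$, suppose for contradiction that $R_{g^{-1}}\neq\{0\}$, pick $a_{g^{-1}}\in R_{g^{-1}}-\{0\}$, and apply faithfulness to the pair $(g^{-1},g)$ to obtain $a_{g^{-1}}R_{g}\neq\{0\}$, which forces $R_{g}\neq\{0\}$, a contradiction. Hence $R_{g^{-1}}=\{0\}$ and $(R,G)$ is weak. Either route is only a few lines; the one thing to get right is the indexing, ensuring the faithfulness hypothesis is applied so that it produces a nonzero product landing in $R_{g}$.
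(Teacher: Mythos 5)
Your proposal is correct and matches the paper exactly: the paper derives Corollary \ref{11} by the same two-step composition, noting that every faithful graded ring is non-degenerate and then invoking Proposition \ref{7}. Your alternative direct argument (mirroring the proof of Proposition \ref{7} with faithfulness applied to $a_{g^{-1}}$ and the component $R_{g}$) is also valid, but it is just the same reasoning with the intermediate notion inlined.
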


\begin{cor} If $(R,G)$ is regular, then $(R,G)$ is weak.
\end{cor}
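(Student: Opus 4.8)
The plan is to obtain this as an immediate consequence of Proposition \ref{7} by chaining it with the already-noted implication that regularity forces non-degeneracy. Since Proposition \ref{7} establishes ``non-degenerate $\Rightarrow$ weak,'' it suffices to observe that every regular graded ring is non-degenerate, which was asserted in the introductory discussion. Thus the entire argument reduces to a single link, and I would present the corollary as a direct corollary of Proposition \ref{7}.

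For completeness I would first spell out the substantive link, namely ``regular $\Rightarrow$ non-degenerate,'' even though it was stated as easily verified. Fix $g\in G$ and take any $a_{g}\in R_{g}-\{0\}$. Suppose toward a contradiction that $a_{g}R_{g^{-1}}=\{0\}$. Then in particular $a_{g}R_{g^{-1}}a_{g}=\{0\}$, so regularity would give $a_{g}\in a_{g}R_{g^{-1}}a_{g}=\{0\}$, forcing $a_{g}=0$, a contradiction. Hence $a_{g}R_{g^{-1}}\neq\{0\}$, and the symmetric argument using $R_{g^{-1}}a_{g}a_{g}$ (or rather the mirror inclusion $a_g \in a_g R_{g^{-1}} a_g$ read from the left) yields $R_{g^{-1}}a_{g}\neq\{0\}$. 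Therefore $(R,G)$ is non-degenerate.

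With that in hand, the proof finishes in one step: apply Proposition \ref{7} to the non-degenerate ring $(R,G)$ to conclude that $(R,G)$ is weak. There is essentially no obstacle here; the only point requiring any care is making sure both one-sided conditions in the definition of non-degeneracy are checked, since weakness of the graded ring ultimately rests (via Proposition \ref{7}) on the non-vanishing of $a_{g}R_{g}$-type products. Accordingly, I expect the author's proof to be a single sentence citing the regular-implies-non-degenerate fact together with Proposition \ref{7}, exactly paralleling the treatment of Corollary \ref{11} for faithful rings.
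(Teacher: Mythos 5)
Your proposal is correct and matches the paper exactly: the paper derives this corollary, with no written proof, from the remark that every regular graded ring is non-degenerate (stated as easily verified in the introduction) combined with Proposition \ref{7}. Your explicit verification of the link ``regular $\Rightarrow$ non-degenerate'' via the inclusion $a_{g}\in a_{g}R_{g^{-1}}a_{g}$ is sound and simply fills in the detail the paper omitted.
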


\begin{prop} Let $R$ be a $G$-graded ring. If $R$ has no zero divisors, then $(R,G)$ is weak if and only if $(R,G)$ is non-degenerate.
\end{prop}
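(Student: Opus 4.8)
The plan is to dispose of one implication immediately and concentrate all the work on the other. By Proposition~\ref{7}, every non-degenerate graded ring is weak, and that implication required no hypothesis on zero divisors whatsoever; so the direction ``non-degenerate $\Rightarrow$ weak'' comes for free. All the content of the proposition therefore lies in the converse: assuming $R$ has no zero divisors and that $(R,G)$ is weak, I want to conclude that $(R,G)$ is non-degenerate.

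First I would record a symmetric reformulation of weakness. The definition gives $R_g=\{0\}\Rightarrow R_{g^{-1}}=\{0\}$; applying this same implication to $g^{-1}$ in place of $g$ yields $R_{g^{-1}}=\{0\}\Rightarrow R_g=\{0\}$. Together these say that $R_g=\{0\}$ if and only if $R_{g^{-1}}=\{0\}$, equivalently $R_g\neq\{0\}$ if and only if $R_{g^{-1}}\neq\{0\}$. This biconditional on the support is the only place where weakness is used.

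Next, fix $g\in G$ and a nonzero homogeneous element $a_g\in R_g-\{0\}$; I must show $a_gR_{g^{-1}}\neq\{0\}$ and $R_{g^{-1}}a_g\neq\{0\}$. Since $a_g\neq 0$ we have $R_g\neq\{0\}$, so by the reformulation $R_{g^{-1}}\neq\{0\}$, and I may choose a nonzero $b_{g^{-1}}\in R_{g^{-1}}$. The products $a_gb_{g^{-1}}$ and $b_{g^{-1}}a_g$ lie in $R_e$, because $R_gR_{g^{-1}}\subseteq R_{gg^{-1}}=R_e$ and $R_{g^{-1}}R_g\subseteq R_{g^{-1}g}=R_e$. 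Here the hypothesis that $R$ has no zero divisors does the essential work: as $a_g\neq 0$ and $b_{g^{-1}}\neq 0$, both products are nonzero. Hence $a_gR_{g^{-1}}\neq\{0\}$ and $R_{g^{-1}}a_g\neq\{0\}$, which is precisely non-degeneracy.

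I do not expect any serious obstacle: the argument is short once one observes that weakness supplies a nonzero partner sitting in $R_{g^{-1}}$, after which the domain hypothesis automatically keeps the relevant products alive. The only point demanding a little care is to make the support argument genuinely symmetric, that is, to invoke weakness for both $g$ and $g^{-1}$, rather than relying on the one-directional form in which the definition is stated.
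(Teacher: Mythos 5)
Your proposal is correct and follows essentially the same route as the paper: the direction ``non-degenerate $\Rightarrow$ weak'' is delegated to Proposition~\ref{7}, and the converse picks a nonzero $a_g\in R_g$, uses weakness (applied at $g^{-1}$, a detail the paper leaves implicit but you rightly make explicit) to obtain a nonzero element of $R_{g^{-1}}$, and then invokes the absence of zero divisors to conclude $a_gR_{g^{-1}}\neq\{0\}$ and $R_{g^{-1}}a_g\neq\{0\}$. No gaps.
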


\begin{proof} Suppose that $(R,G)$ is weak. Let $g\in G$ and $a_{g}\in R_{g}-\{0\}$. Then $R_{g}\neq\{0\}$ and then $R_{g^{-1}}\neq\{0\}$ since $(R,G)$ is weak. Since $R$ has no zero divisors, $a_{g}R_{g^{-1}}\neq\{0\}$ and $R_{g^{-1}}a_{g}\neq\{0\}$ and hence $(R,G)$ is non-degenerate. The converse holds by Proposition \ref{7}.
\end{proof}

\begin{prop}\label{15} Let $R$ be a weakly $G$-graded ring. If $I$ is an ideal of $R$ with $I\bigcap R_{e}=\{0\}$ and $R$ has no zero divisors, then $I\bigcap R_{g}=\{0\}$ for all $g\in G$.
\end{prop}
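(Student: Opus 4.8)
The plan is to argue by contradiction. Suppose the conclusion fails, so that $I\cap R_{g}\neq\{0\}$ for some $g\in G$; necessarily $g\neq e$, since $I\cap R_{e}=\{0\}$ is assumed. Fix a nonzero element $a_{g}\in I\cap R_{g}$. My goal is to manufacture from $a_{g}$ a nonzero element of $I\cap R_{e}$, which will contradict the hypothesis $I\cap R_{e}=\{0\}$. The natural way to push a homogeneous element of degree $g$ into the identity component is to multiply it by a homogeneous element of degree $g^{-1}$, since $R_{g}R_{g^{-1}}\subseteq R_{gg^{-1}}=R_{e}$.

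First I would observe that $R_{g}\neq\{0\}$, because it contains $a_{g}\neq 0$. This is exactly where the weakness hypothesis enters: since $(R,G)$ is weak and $R_{g}\neq\{0\}$, we obtain $R_{g^{-1}}\neq\{0\}$, so there exists a nonzero element $b_{g^{-1}}\in R_{g^{-1}}$. Without weakness the component $R_{g^{-1}}$ could vanish and no such multiplier would exist, so this is the step that makes the argument work. Forming the product $a_{g}b_{g^{-1}}$, I have $a_{g}b_{g^{-1}}\in R_{g}R_{g^{-1}}\subseteq R_{e}$.

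It remains to check that this product is both nonzero and in $I$. Since $R$ has no zero divisors and $a_{g}\neq 0$, $b_{g^{-1}}\neq 0$, we get $a_{g}b_{g^{-1}}\neq 0$; this is precisely where the no-zero-divisor hypothesis is used. On the other hand, $a_{g}\in I$ and $I$ is an ideal, so $a_{g}b_{g^{-1}}\in I$. Hence $a_{g}b_{g^{-1}}$ is a nonzero element of $I\cap R_{e}$, contradicting $I\cap R_{e}=\{0\}$. Therefore no such $g$ exists, and $I\cap R_{g}=\{0\}$ for all $g\in G$.

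I do not anticipate a serious obstacle here: the entire content of the proof is the single observation that weakness guarantees a nonzero multiplier in $R_{g^{-1}}$, after which the ideal property and the absence of zero divisors close the argument immediately. The only point demanding care is to confirm that each of the three hypotheses (weak, ideal, no zero divisors) is genuinely invoked, so that the statement is not being proved under unnecessarily strong assumptions.
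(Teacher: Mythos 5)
Your proof is correct and is essentially the paper's argument recast in element-wise, contradiction form: both proofs multiply $I\cap R_{g}$ by $R_{g^{-1}}$, use weakness precisely to guarantee $R_{g^{-1}}\neq\{0\}$ (the paper runs this as ``$(I\cap R_{g})R_{g^{-1}}\subseteq I\cap R_{e}=\{0\}$, so by no zero divisors one factor vanishes''), and land a nonzero element in $I\cap R_{e}$ via the ideal property. No gaps; your identification of where each hypothesis enters matches the paper exactly.
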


\begin{proof} Let $g\in G$. If $g\notin supp(R,G)$, then $R_{g}=\{0\}$ and then $I\bigcap R_{g}=\{0\}$. Suppose that $g\in supp(R,G)$. Then $\{0\}=I\bigcap R_{e}\supseteq I\bigcap (R_{g}R_{g^{-1}})\supseteq(I\bigcap R_{g})R_{g^{-1}}$. Since $R$ has no zero divisors, either $I\bigcap R_{g}=\{0\}$ or $R_{g^{-1}}=\{0\}$. If $R_{g^{-1}}=\{0\}$, then $R_{g}=\{0\}$ since $(R,G)$ is weak and that is a contradiction since $g\in supp(R,G)$. So, $I\bigcap R_{g}=\{0\}$.
\end{proof}

\begin{prop}\label{8} Let $R$ be a $G$-graded ring. If $supp(R,G)$ is a subgroup of $G$, then $R$ is weakly graded.
\end{prop}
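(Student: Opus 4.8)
The plan is to recognise the weakness condition as a symmetry statement about the support under inversion, and then exploit the fact that any subgroup is automatically closed under taking inverses. Writing $H = supp(R,G)$, the hypothesis is that $H$ is a subgroup of $G$, and the goal is to verify the defining property of weakness: whenever $R_{g}=\{0\}$ we must have $R_{g^{-1}}=\{0\}$.

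First I would reformulate both sides of the implication in terms of membership in the support. By definition of $supp(R,G)$, the condition $R_{g}=\{0\}$ is equivalent to $g\notin H$, and similarly $R_{g^{-1}}=\{0\}$ is equivalent to $g^{-1}\notin H$. So the entire statement reduces to the purely group-theoretic assertion that $g\notin H$ implies $g^{-1}\notin H$.

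Next I would establish this implication by contraposition, using the subgroup hypothesis. Suppose toward a contradiction that $g^{-1}\in H$. Since $H$ is a subgroup of $G$, it is closed under inverses, so $(g^{-1})^{-1}=g\in H$, contradicting $g\notin H$. Hence $g^{-1}\notin H$, that is, $R_{g^{-1}}=\{0\}$. Since $g$ was an arbitrary element with $R_{g}=\{0\}$, this shows $(R,G)$ is weak.

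There is no genuine obstacle here: the only point worth flagging is that the weakness axiom is exactly the requirement that the support be invariant under $g\mapsto g^{-1}$, and a subgroup satisfies this invariance trivially. The argument is immediate once the support is identified with the relevant index set, so the step deserving the most care is simply the clean translation between the vanishing of homogeneous components and membership in $supp(R,G)$.
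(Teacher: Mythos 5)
Your proof is correct and is essentially identical to the paper's: both argue by contradiction that if $R_{g^{-1}}\neq\{0\}$ then $g^{-1}\in supp(R,G)$, whence $g\in supp(R,G)$ by closure of the subgroup under inverses, contradicting $R_{g}=\{0\}$. Your explicit translation of the weakness condition into invariance of the support under $g\mapsto g^{-1}$ is a clean way to phrase the same one-step argument.
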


\begin{proof} Let $g\in G$ with $R_{g}=\{0\}$. If $R_{g^{-1}}\neq\{0\}$, then $g^{-1}\in supp(R,G)$ and since $supp(R,G)$ is a subgroup, $g\in supp(R,G)$ and then $R_{g}\neq\{0\}$ a contradiction. So, $R_{g^{-1}}=\{0\}$ and hence $R$ is weakly graded.
\end{proof}

The next example shows that the converse of Proposition \ref{8} is not true in general.

\begin{exa}\label{13} Let $R=M_{3}(K)$ and $G=\mathbb{Z}_{7}$. Then $R$ is $G$-graded by

$R_{0}=\left(
         \begin{array}{ccc}
           K & 0 & 0 \\
           0 & K & 0 \\
           0 & 0 & K \\
         \end{array}
       \right)$, $R_{1}=\left(
                          \begin{array}{ccc}
                            0 & K & 0 \\
                            0 & 0 & K \\
                            0 & 0 & 0 \\
                          \end{array}
                        \right)$, $R_{2}=\left(
                                           \begin{array}{ccc}
                                             0 & 0 & K \\
                                             0 & 0 & 0 \\
                                             0 & 0 & 0 \\
                                           \end{array}
                                         \right)$, $R_{5}=\left(
                                                            \begin{array}{ccc}
                                                              0 & 0 & 0 \\
                                                              0 & 0 & 0 \\
                                                              K & 0 & 0 \\
                                                            \end{array}
                                                          \right)$, $R_{6}=\left(
                                                                             \begin{array}{ccc}
                                                                               0 & 0 & 0 \\
                                                                               K & 0 & 0 \\
                                                                               0 & K & 0 \\
                                                                             \end{array}
                                                                           \right)$ and $R_{3}=R_{4}=\{0\}$. Clearly, $(R,G)$ is weak but $supp(R,G)=\left\{0,1,2,5,6\right\}$ is not a subgroup of $G$.
\end{exa}

The next result states that the converse of Proposition \ref{8} is true if $R$ has no zero divisors.

\begin{prop}\label{9} Let $R$ be a $G$-graded ring. If $R$ has no zero divisors, then $(R,G)$ is weak if and only if $supp(R,G)$ is a subgroup of $G$.
\end{prop}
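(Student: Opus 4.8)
The plan is to prove the two implications of the biconditional separately and to notice at the outset that one of them is already in hand. The direction ``$supp(R,G)$ is a subgroup of $G$ $\Longrightarrow$ $(R,G)$ is weak'' is precisely the statement of Proposition \ref{8}, and that proposition was established with no hypothesis on zero divisors whatsoever. So I would dispatch this half of the equivalence simply by citing Proposition \ref{8}.

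The content of the result therefore lies in the converse: assuming $R$ has no zero divisors and $(R,G)$ is weak, I want to show that $supp(R,G)$ is a subgroup of $G$. I would do this by verifying the three subgroup conditions. The identity is immediate and needs no hypotheses: since $1\in R_{e}$ we have $R_{e}\neq\{0\}$, so $e\in supp(R,G)$. For closure under inverses, suppose $g\in supp(R,G)$, so $R_{g}\neq\{0\}$. If $g^{-1}\notin supp(R,G)$, then $R_{g^{-1}}=\{0\}$, and applying the weakness hypothesis to the element $g^{-1}$ yields $R_{(g^{-1})^{-1}}=R_{g}=\{0\}$, contradicting $g\in supp(R,G)$; hence $g^{-1}\in supp(R,G)$. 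Note that this step uses only weakness.

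For closure under multiplication, I would take $g,h\in supp(R,G)$ and choose nonzero homogeneous elements $a_{g}\in R_{g}$ and $b_{h}\in R_{h}$. Then $a_{g}b_{h}\in R_{g}R_{h}\subseteq R_{gh}$, and because $R$ has no zero divisors the product of the two nonzero elements $a_{g},b_{h}$ is nonzero; thus $a_{g}b_{h}\neq 0$ and so $R_{gh}\neq\{0\}$, i.e. $gh\in supp(R,G)$. This is the single point at which the no-zero-divisors hypothesis is actually invoked. Having verified that $supp(R,G)$ contains $e$ and is closed under inverses and products, I conclude it is a subgroup, completing the converse.

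The main thing to get right here is not a calculation but the bookkeeping of hypotheses: recognizing that weakness alone delivers closure under inverses, that the integral-domain condition alone delivers closure under products, and that one direction of the equivalence is free from Proposition \ref{8}. There is no genuine obstacle beyond this identification; the remainder is a routine check of the subgroup axioms.
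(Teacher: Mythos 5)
Your proof is correct and takes essentially the same route as the paper's: weakness gives closure under inverses, the no-zero-divisors hypothesis gives closure under products (your homogeneous-element argument is exactly the content of the paper's line $\{0\}\neq R_{g}R_{h}\subseteq R_{gh}$), and the converse direction is dispatched by citing Proposition \ref{8}. Your version merely makes the hypothesis bookkeeping more explicit than the paper does.
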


\begin{proof} Suppose that $(R,G)$ is weak. Let $g,h\in supp(R,G)$. Then $\{0\}\neq R_{g}R_{h}\subseteq R_{gh}$ and then $gh\in supp(R,G)$. Since $0\neq1\in R_{e}$, $e\in supp(R,G)$. Let $g\in supp(R,G)$. Then $g^{-1}\in supp(R,G)$ since $(R,G)$ is weak. So, $supp(R,G)$ is a subgroup of $G$. The converse holds by Proposition \ref{8}.
\end{proof}

\begin{cor}\label{10} Let $R$ be an integral domain. If $R$ is weakly $G$-graded, then $supp(R,G)$ is an abelian subgroup of $G$.
\end{cor}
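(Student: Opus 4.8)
The plan is to split the statement into its two assertions --- that $supp(R,G)$ is a subgroup of $G$ and that this subgroup is abelian --- and to dispatch the first one by citing earlier work. Since $R$ is an integral domain it has no zero divisors, so Proposition \ref{9} applies verbatim and already yields that $supp(R,G)$ is a subgroup of $G$. Thus the only genuine content is the commutativity.

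For the abelian part I would fix $g,h\in supp(R,G)$ and choose nonzero homogeneous elements $a_{g}\in R_{g}-\{0\}$ and $b_{h}\in R_{h}-\{0\}$, which exist precisely because $g,h$ lie in the support. By the grading axiom the product $a_{g}b_{h}$ lies in $R_{gh}$, while $b_{h}a_{g}$ lies in $R_{hg}$. Here I use that an integral domain is by definition commutative, so $a_{g}b_{h}=b_{h}a_{g}$, and that it has no zero divisors, so this common value is nonzero. Hence there is a single nonzero element of $R$ that is simultaneously homogeneous of degree $gh$ and homogeneous of degree $hg$.

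The decisive step is then to invoke the directness of the decomposition $R=\bigoplus_{g\in G}R_{g}$: if $gh\neq hg$ these are distinct components, so $R_{gh}\bigcap R_{hg}=\{0\}$, which would force $a_{g}b_{h}=0$, contradicting the previous step. Therefore $gh=hg$, and since $g,h\in supp(R,G)$ were arbitrary, $supp(R,G)$ is abelian.

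I do not expect a serious obstacle here; the argument is short once the right objects are in place. The only points demanding care are the two ring-theoretic inputs packaged into ``integral domain'' --- commutativity (needed to identify $a_{g}b_{h}$ with $b_{h}a_{g}$) and the absence of zero divisors (needed to guarantee $a_{g}b_{h}\neq0$) --- together with the observation that the direct-sum grading makes the intersection of two distinct homogeneous components trivial. These three facts are exactly what converts the commutativity of $R$ into the commutativity of the index set $supp(R,G)$.
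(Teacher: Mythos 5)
Your proposal is correct and follows essentially the same route as the paper: it invokes Proposition \ref{9} for the subgroup claim and then uses commutativity, the absence of zero divisors, and the directness of the decomposition $R=\bigoplus_{g\in G}R_{g}$ to force $gh=hg$. The only cosmetic difference is that you argue with a single nonzero element $a_{g}b_{h}\in R_{gh}\bigcap R_{hg}$, whereas the paper phrases the same step at the level of the sets $R_{g}R_{h}=R_{h}R_{g}$; you also make explicit the implicit final step that distinct homogeneous components intersect trivially.
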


\begin{proof} By Proposition \ref{9}, $supp(R,G)$ is a subgroup of $G$. Let $g,h\in supp(R,G)$. Then $\{0\}\neq R_{g}R_{h}\subseteq R_{gh}$ and $\{0\}\neq R_{h}R_{g}\subseteq R_{hg}$. Since $R$ is commutative, $R_{g}R_{h}=R_{h}R_{g}$ and then $\{0\}\neq R_{g}R_{h}\subseteq R_{gh}\bigcap R_{hg}$ and hence $gh=hg$. So, $supp(R,G)$ is an abelian subgroup of $G$.
\end{proof}

\begin{cor} Let $R$ be an integral domain. If $R$ is $G$-graded such that $(R,G)$ is faithful, then $G$ is abelian.
\end{cor}

\begin{proof} By Corollary \ref{11}, $(R,G)$ is weak and then by Corollary \ref{10}, $supp(R,G)$ is an abelian subgroup of $G$. On the other hand, since $(R,G)$ is faithful, $supp(R,G)=G$ and hence $G$ is abelian.
\end{proof}

In \cite{Refai}, it is proved that if $(R,G)$ is first strong, then $supp(R,G)$ is a subgroup of $G$. Combining this with Proposition \ref{8} to have the next result.

\begin{cor}\label{12} If $R,G)$ is first strong, then $(R,G)$ is weak.
\end{cor}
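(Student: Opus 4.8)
If $(R,G)$ is first strong, then $(R,G)$ is weak.

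Let me think about how to prove this.The statement to prove is Corollary \ref{12}: if $(R,G)$ is first strong, then $(R,G)$ is weak. The plan is to reduce this immediately to Proposition \ref{8} by invoking the known structural fact about the support of a first strong graded ring. Specifically, as recalled in the introduction and attributed to \cite{Refai}, whenever $(R,G)$ is first strong the support $supp(R,G)$ is a subgroup of $G$. Once this is in hand, the conclusion is nothing more than a direct application of Proposition \ref{8}, which asserts precisely that a subgroup support forces the ring to be weak.

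Concretely, I would proceed in two short steps. First, I would cite the result from \cite{Refai} to conclude that $supp(R,G)$ is a subgroup of $G$; this is the substantive input, but it is already an established theorem in the literature and is quoted verbatim in the introduction, so no fresh argument is required. Second, I would apply Proposition \ref{8}, whose hypothesis is exactly that $supp(R,G)$ is a subgroup of $G$ and whose conclusion is that $R$ is weakly graded. Chaining these two facts yields that $(R,G)$ is weak.

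There is essentially no obstacle here, since both ingredients are pre-established: the only content is recognizing that the first strong hypothesis feeds directly into the subgroup-support criterion, which in turn feeds into Proposition \ref{8}. If one wished to be fully self-contained rather than quoting \cite{Refai}, the one point needing verification would be closure of the support under inverses for a first strong ring, which follows from the defining condition $1 \in R_g R_{g^{-1}}$ for every $g \in supp(R,G)$; this condition forces $R_{g^{-1}} \neq \{0\}$ whenever $g \in supp(R,G)$, so $g^{-1} \in supp(R,G)$ as well. That observation, together with $e \in supp(R,G)$ (since $1 \in R_e$) and closure under products (from $R_g R_h = R_{gh}$ on the support), gives the subgroup property and hence the result, but the cleanest presentation simply combines the cited fact with Proposition \ref{8} as indicated.
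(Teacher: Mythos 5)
Your proposal is correct and matches the paper's argument exactly: the paper likewise quotes from \cite{Refai} that first strong implies $supp(R,G)$ is a subgroup of $G$, and then combines this with Proposition \ref{8} to conclude weakness. Your optional self-contained aside is also sound (indeed, closure of the support under inverses, which is immediate from $1\in R_{g}R_{g^{-1}}$, already suffices for weakness), but the main route is the paper's own.
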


The converse of Corollary \ref{12} is not true in general; in Example \ref{13}, $(R,G)$ is weak but it is not first strong since $supp(R,G)$ is not a subgroup of $G$. Also, it is clear that if $(R,G)$ is strong, then $supp(R,G)=G$. Combining this with Proposition \ref{8} to have the next result.

\begin{cor}\label{14} If $R,G)$ is strong, then $(R,G)$ is weak.
\end{cor}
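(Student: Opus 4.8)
The plan is to deduce this directly from Proposition \ref{8} by first identifying the support of a strongly graded ring. Recall that $(R,G)$ being strong means $R_{g}R_{h}=R_{gh}$ for all $g,h\in G$; the key consequence I would extract is that $supp(R,G)=G$, exactly as was already noted in the introduction.

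To establish the support claim, I would reason as follows. Fix an arbitrary $g\in G$. The strong condition applied to the pair $g,g^{-1}$ gives $R_{g}R_{g^{-1}}=R_{e}$. Since $1\in R_{e}$ and $1\neq0$, we have $R_{e}\neq\{0\}$, and therefore $R_{g}R_{g^{-1}}\neq\{0\}$. This forces $R_{g}\neq\{0\}$, since if $R_{g}=\{0\}$ the product $R_{g}R_{g^{-1}}$ would collapse to $\{0\}$. Hence $g\in supp(R,G)$, and as $g$ was arbitrary we conclude $supp(R,G)=G$.

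With $supp(R,G)=G$ in hand, the conclusion is immediate: $G$ is a subgroup of itself, so Proposition \ref{8} applies verbatim and yields that $R$ is weakly graded. I expect no genuine obstacle here, as the statement is a routine corollary once the support is computed; the only substantive input is the already-established Proposition \ref{8}, and the supporting observation that strong graduation forces a full support is the short computation sketched above.
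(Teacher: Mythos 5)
Your proposal is correct and follows exactly the paper's route: the paper also obtains Corollary \ref{14} by noting that strong graduation forces $supp(R,G)=G$ and then invoking Proposition \ref{8}. The only difference is that you spell out the short computation $R_{g}R_{g^{-1}}=R_{e}\neq\{0\}$ justifying the support claim, which the paper simply declares clear.
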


The converse of Corollary \ref{14} is not true in general; in Example \ref{13}, $(R,G)$ is weak but it is not strong since Since $R_{3}R_{4}=\{0\}\neq R_{0}$.

On the other hand, if $(R,G)$ is second strong, then $(R,G)$ need not be weak; in Example \ref{5}, $(R,G)$ is second strong but it is not weak. Also, the next example shows that if $(R,G)$ is weak, then $(R,G)$ need not be second strong.

\begin{exa} Let $R=K[x]$ and $G=\mathbb{Z}_{3}$. Then $R$ is $G$-graded by

$R_{0}=\left\langle 1,x^{3},x^{6},....\right\rangle$, $R_{1}=\left\langle x,x^{4},x^{7},....\right\rangle$ and $R_{2}=\left\langle x^{2},x^{5},x^{8},....\right\rangle$. $(R,G)$ is weak but it is not second strong since $R_{1}R_{2}\neq R_{0}$ since $1\in R_{0}$ such that $1\notin R_{1}R_{2}$.
\end{exa}

In fact, if $(R,G)$ is second strong and weak, then $(R,G)$ is first strong and hence $supp(R,G)$ is a subgroup of $G$. This is what we are going to prove.

\begin{prop} Let $R$ be a $G$-graded ring. If $(R,G)$ is second strong, then $(R,G)$ is weak if and only if $(R,G)$ is first strong.
\end{prop}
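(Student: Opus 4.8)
The plan is to prove the two implications separately, noting that one of them is already available from earlier work. For the forward direction, suppose $(R,G)$ is first strong; then by Corollary \ref{12} (which does not even use the second strong hypothesis) $(R,G)$ is weak. So that half requires nothing new.

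For the converse I would assume that $(R,G)$ is both second strong and weak, and aim to deduce first strongness. Since the introduction records the fact (from \cite{Refai}) that a second strong graded ring whose support is a subgroup of $G$ is automatically first strong, it suffices to upgrade the monoid $supp(R,G)$ to a subgroup. By the definition of second strong, $supp(R,G)$ is a monoid, so it contains $e$ and is closed under the group product; the only missing ingredient for a subgroup is closure under inverses.

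The key step — and the only place where weakness is used — is precisely this closure under inverses, which is essentially a restatement of the weak hypothesis. Let $g\in supp(R,G)$, so $R_{g}\neq\{0\}$. If $R_{g^{-1}}=\{0\}$, then applying weakness to the element $g^{-1}$ yields $R_{(g^{-1})^{-1}}=R_{g}=\{0\}$, a contradiction. Hence $R_{g^{-1}}\neq\{0\}$, that is $g^{-1}\in supp(R,G)$, and therefore $supp(R,G)$ is a subgroup of $G$.

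Finally I would conclude first strongness, either by invoking the cited fact directly or by arguing explicitly: for $g\in supp(R,G)$ we now have $g^{-1}\in supp(R,G)$, so the multiplicativity clause of second strong gives $R_{g}R_{g^{-1}}=R_{gg^{-1}}=R_{e}$, and since $1\in R_{e}$ we obtain $1\in R_{g}R_{g^{-1}}$ for every $g\in supp(R,G)$, which is exactly the definition of first strong. I do not anticipate a genuine obstacle here; the entire content of the proposition is the clean observation that, in the presence of second strongness, weakness is precisely what promotes the support monoid to a group.
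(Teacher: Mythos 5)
Your proposal is correct and follows essentially the same route as the paper: both directions coincide, with the converse handled by Corollary \ref{12} and the main direction by using weakness (applied to $g^{-1}$) to get $g^{-1}\in supp(R,G)$, then the second strong multiplicativity to conclude $1\in R_{e}=R_{g}R_{g^{-1}}$. Your added framing of the argument as ``weakness promotes the support monoid to a subgroup'' is a nice gloss on the same computation, but the underlying steps are identical to the paper's proof.
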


\begin{proof} Suppose that $(R,G)$ is weak. Let $g\in supp(R,G)$. Then since $(R,G)$ is weak, $g^{-1}\in supp(R,G)$ and then since $(R,G)$ is second strong, $R_{g}R_{g^{-1}}=R_{e}$ and then $1\in R_{g}R_{g^{-1}}$ and hence $(R,G)$ is first strong. The converse holds by Corollary \ref{12}.
\end{proof}

\begin{cor} Let $R$ be a $G$-graded ring. If $(R,G)$ is second strong, then $(R,G)$ is weak if and only if $supp(R,G)$ is a subgroup of $G$.
\end{cor}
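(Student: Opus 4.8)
The plan is to establish the two implications separately, in each case reducing immediately to a result already proved in the excerpt, so that no fresh computation is needed.

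For the ($\Leftarrow$) direction I would suppose that $supp(R,G)$ is a subgroup of $G$. This is exactly the hypothesis of Proposition \ref{8}, which yields at once that $(R,G)$ is weak. It is worth noting that this implication does not even use the second-strong assumption; it holds for any $G$-graded ring whose support is a subgroup.

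For the ($\Rightarrow$) direction I would suppose that $(R,G)$ is weak. Since the hypothesis of the corollary supplies that $(R,G)$ is second strong, the immediately preceding Proposition applies and gives that $(R,G)$ is first strong. I would then invoke the characterization of first strongly graded rings recalled in the introduction (from \cite{Refai}), namely that if $(R,G)$ is first strong then $supp(R,G)$ is a subgroup of $G$. Chaining these two facts produces the desired conclusion that $supp(R,G)$ is a subgroup of $G$.

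There is no genuine obstacle here; the corollary is a routine synthesis of the preceding Proposition with Proposition \ref{8} and the known properties of first strongly graded rings. The only point requiring care is bookkeeping: one must cite the second-strong-plus-weak $\Leftrightarrow$ first-strong equivalence correctly for the forward direction, and recognize that the reverse direction is in fact stronger than stated, resting solely on Proposition \ref{8}.
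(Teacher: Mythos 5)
Your proof is correct and follows exactly the route the paper intends: the forward direction combines the immediately preceding Proposition (second strong and weak implies first strong) with the fact from \cite{Refai} that first strong implies $supp(R,G)$ is a subgroup, while the reverse direction is just Proposition \ref{8}. Your observation that the reverse direction does not need the second-strong hypothesis is also accurate, and the paper itself signals this same argument in the text preceding the Proposition, so nothing further is needed.
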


In the rest of this section, we introduce an analogous study to that given in \cite{Dade}. The question we deal with is: For a group $G$ and a field $K$, can we write $R=K[x_{1},x_{2},....,x_{m}]$ as strongly $G$-graded ring?.

The next example shows that it is possible for $G=\mathbb{Z}_{2}$ and $K=\mathbb{C}$ (the field of complex numbers).

\begin{exa} Let $R=\mathbb{C}[x_{1},x_{2},....x_{m}]$ and $G=\mathbb{Z}_{2}$. Then $R$ is strongly $G$-graded by $R_{0}=\mathbb{R}[x_{1},x_{2},....x_{m}]$ and $R_{1}=i\mathbb{R}[x_{1},x_{2},....x_{m}]$ where $\mathbb{R}$ is he field of real numbers.
\end{exa}

\begin{lem}\label{16} Let $R$ be a weakly $G$-graded ring ($R$ is any ring). If $R$ has no zero divisors and $R_{e}$ is a division ring, then $R$ is simple.
\end{lem}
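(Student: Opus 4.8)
The plan is to reduce everything to the single structural fact that \emph{every nonzero homogeneous element of $R$ is invertible}, and then to exploit this against an arbitrary nonzero two-sided ideal. So the first and main step is to prove that claim. Fix $g\in G$ and $a_{g}\in R_{g}-\{0\}$. Since $R_{g}\neq\{0\}$ and $(R,G)$ is weak, we get $R_{g^{-1}}\neq\{0\}$ (equivalently, by Proposition \ref{9}, $supp(R,G)$ is a subgroup of $G$, so $g^{-1}\in supp(R,G)$). The product $a_{g}R_{g^{-1}}$ is an additive subgroup of $R_{g}R_{g^{-1}}\subseteq R_{e}$, and since $(a_{g}R_{g^{-1}})R_{e}=a_{g}(R_{g^{-1}}R_{e})\subseteq a_{g}R_{g^{-1}}$ it is a right ideal of $R_{e}$. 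Because $R$ has no zero divisors and $R_{g^{-1}}\neq\{0\}$, this right ideal is nonzero, hence equals $R_{e}$ as $R_{e}$ is a division ring; in particular $1\in a_{g}R_{g^{-1}}$, so $a_{g}$ has a right inverse lying in $R_{g^{-1}}$. Symmetrically, $R_{g^{-1}}a_{g}$ is a nonzero left ideal of $R_{e}$, hence all of $R_{e}$, giving $a_{g}$ a left inverse in $R_{g^{-1}}$. A left inverse and a right inverse of the same element must coincide, so $a_{g}$ is a unit with $a_{g}^{-1}\in R_{g^{-1}}$.

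Granting this, the graded part of the conclusion is immediate: any nonzero graded ideal $J$ of $R$ contains a nonzero homogeneous element by the definition of a graded ideal, and that element is a unit, so $J=R$; thus $R$ has no proper nonzero graded ideal. The remaining task is to promote this to honest simplicity, i.e. to show that an arbitrary nonzero two-sided ideal $I$ already contains a nonzero homogeneous element. The natural route is a minimal-support argument: choose $0\neq x\in I$ whose homogeneous decomposition $x=\sum_{i=1}^{n}x_{g_{i}}$ has the fewest nonzero terms, and use that $x_{g_{1}}$ is a unit to replace $x$ by $x_{g_{1}}^{-1}x\in I$, which now has $1\in R_{e}$ as one of its components while keeping the same number $n$ of terms. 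If $n=1$ we are done; otherwise one feeds homogeneous units $r$ into the difference $rxr^{-1}-x\in I$, in which the degree-$e$ components cancel, aiming to produce a strictly shorter nonzero element and contradict minimality.

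I expect this last reduction to be the main obstacle, and the place where the hypothesis that $R$ has no zero divisors must really be used. The delicate point is that conjugation by a homogeneous unit $r\in R_{k}$ sends a component of degree $h$ to degree $khk^{-1}$, so in the nonabelian situation the degrees are shifted and the cancellation bookkeeping in $rxr^{-1}-x$ is not automatic; controlling this shifting, and ensuring that the process genuinely forces $I$ to meet $h(R)$ in a nonzero element rather than merely detecting central homogeneous components, is exactly where the argument must be carried out with care. Once $I$ is shown to contain a nonzero homogeneous element, that element is a unit by the first step, whence $I=R$ and $R$ is simple.
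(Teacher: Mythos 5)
Your first step is correct and cleanly carried out: weakness plus the absence of zero divisors makes $a_{g}R_{g^{-1}}$ and $R_{g^{-1}}a_{g}$ nonzero one-sided ideals of the division ring $R_{e}$, so every nonzero homogeneous element is a unit with inverse in the component of inverse degree, and consequently $R$ has no nonzero proper \emph{graded} ideal. But the final reduction --- producing a nonzero homogeneous element inside an arbitrary nonzero two-sided ideal, which you yourself flag as the main obstacle --- is not a bookkeeping issue you can settle with care: it is impossible, because the lemma as stated is false. Take $R=K[x,x^{-1}]$ with $G=\mathbb{Z}$ and $R_{n}=Kx^{n}$ for all $n\in\mathbb{Z}$. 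This is a weakly (indeed strongly) graded domain with $R_{e}=K$ a field, yet $I=\langle 1+x\rangle$ is a nonzero proper ideal: the units of $R$ are exactly the elements $cx^{n}$ with $c\in K-\{0\}$ (apply the evaluation $x\mapsto -1$ to see $1+x$ is not among them). Moreover $I\bigcap R_{n}=\{0\}$ for every $n$, so $I$ contains no nonzero homogeneous element, and your shortening device is powerless here: $R$ is commutative, so $rxr^{-1}-x=0$ for every unit $r$ and conjugation never shortens anything. What your first step genuinely proves is graded simplicity --- essentially Proposition \ref{2.12} of the paper --- and that is the strongest conclusion available.

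It is worth noting that your instinct about where the difficulty sits is sharper than the paper's own treatment of Lemma \ref{16}. The paper argues: if $I\bigcap R_{e}=\{0\}$, then by Proposition \ref{15} we get $I\bigcap R_{g}=\{0\}$ for all $g\in G$, ``and then $I=\{0\}$, a contradiction.'' That last inference --- an ideal meeting every homogeneous component trivially must be zero --- is valid only for graded ideals, and the example above shows it fails for general ideals ($\langle 1+x\rangle$ meets every $R_{n}$ in $\{0\}$ yet is nonzero). So the paper's proof has a hole at exactly the point you identified and left open; the statement should either assume $I$ graded and conclude that $R$ is graded simple, or be abandoned in this generality, and the results the paper derives from it (the Artinian/Noetherian lemma and Proposition \ref{17}) would then need independent justification.
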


\begin{proof} Let $I$ be a nonzero ideal of $R$. If $I\bigcap R_{e}=\{0\}$, then by Proposition \ref{15}, $I\bigcap R_{g}=\{0\}$ for all $g\in G$ and then $I=\{0\}$ a contradiction. So, $I\bigcap R_{e}$ is a nonzero ideal of $R_{e}$ and then since $R_{e}$ is division ring, $I\bigcap R_{e}=R_{e}$. Let $g\in G$. Then $R_{g}\subseteq R_{g}R_{e}=R_{g}(I\bigcap R_{e})\subseteq R_{g}I\bigcap R_{g}R_{e}\subseteq R_{g}I\bigcap R_{g}\subseteq I\bigcap R_{g}$ and hence $I\bigcap R_{g}=R_{g}$, i.e., $R_{g}\subseteq I$ for all $g\in G$. Thus, $I=R$ and hence $R$ is simple.
\end{proof}

By Lemma \ref{16}, we can introduce the following:

\begin{lem} Let $R$ be a weakly $G$-graded ring ($R$ is any ring). If $R$ has no zero divisors and $R_{e}$ is a division ring, then $R$ is Artinian (Noetherian).
\end{lem}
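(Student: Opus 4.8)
The plan is to derive this statement immediately from the preceding result, since the hypotheses are identical to those of Lemma \ref{16}. My first step is therefore just to record that Lemma \ref{16} applies verbatim: because $R$ is a weakly $G$-graded ring with no zero divisors whose identity component $R_{e}$ is a division ring, Lemma \ref{16} gives that $R$ is simple, i.e. the only two-sided ideals of $R$ are $\{0\}$ and $R$.

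The second step is to translate simplicity into the two chain conditions. Since the lattice of ideals of $R$ consists of exactly the two elements $\{0\}$ and $R$, any ascending chain $I_{1}\subseteq I_{2}\subseteq\cdots$ and any descending chain $I_{1}\supseteq I_{2}\supseteq\cdots$ of ideals of $R$ can change value at most once, and hence must stabilize after finitely many steps. Thus $R$ satisfies both the ascending chain condition and the descending chain condition on ideals, which is precisely the assertion that $R$ is Noetherian and Artinian.

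There is essentially no obstacle once Lemma \ref{16} is in hand; the only point that needs care is to make explicit that the chains in question are chains of ideals, so that the finiteness of the ideal lattice of a simple ring is exactly what is being used. Accordingly I would write the proof in two sentences: invoke Lemma \ref{16} to conclude that $R$ is simple, and then observe that a simple ring, having only the ideals $\{0\}$ and $R$, trivially satisfies both chain conditions and is therefore Artinian and Noetherian.
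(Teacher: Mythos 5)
Your proposal is correct and coincides with the paper's own derivation: the paper gives no separate proof, prefacing the lemma only with ``By Lemma \ref{16}, we can introduce the following,'' which is exactly your two-step argument --- simplicity of $R$ from Lemma \ref{16}, and then both chain conditions trivially, since the ideal lattice of a simple ring has only the two elements $\{0\}$ and $R$. Like the paper, you read the chain conditions on (two-sided) ideals, which is precisely what is used in the subsequent application (Proposition \ref{17}, where $R$ is commutative, so the distinction from one-sided ideals disappears).
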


\begin{prop}\label{17} Let $R=K[x_{1},x_{2},....x_{m}]$ be a $G$-graded. If $R_{e}\subseteq K$, then $(R,G)$ is not weak.
\end{prop}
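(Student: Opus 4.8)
The plan is to argue by contradiction: assume $(R,G)$ \emph{is} weak and deduce that $R$ must coincide with its constant subfield $K$, which is impossible once there is at least one variable. The starting observation is that $R=K[x_{1},\ldots,x_{m}]$ is an integral domain because $K$ is a field, so all of the results for domains in the excerpt are available; in particular I will use the total degree function $\deg$ on $R$, which is non-negative and additive, $\deg(fg)=\deg(f)+\deg(g)$, and whose zero locus is exactly the set of constants $K$.

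First I would fix an arbitrary $g\in supp(R,G)$ and a nonzero $a_{g}\in R_{g}$. Applying the definition of weak to $g^{-1}$ gives its contrapositive, namely that $R_{g}\neq\{0\}$ forces $R_{g^{-1}}\neq\{0\}$; equivalently, $supp(R,G)$ is stable under inversion (alternatively one may invoke Proposition \ref{9} to see that $supp(R,G)$ is a subgroup). Pick a nonzero $b\in R_{g^{-1}}$. Then $a_{g}b\in R_{g}R_{g^{-1}}\subseteq R_{e}\subseteq K$, and $a_{g}b\neq0$ since $R$ is a domain, so $a_{g}b$ is a nonzero constant and hence $\deg(a_{g}b)=0$. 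By additivity $\deg(a_{g})+\deg(b)=0$, and as both degrees are non-negative this forces $\deg(a_{g})=0$, i.e. $a_{g}\in K$.

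Since $a_{g}$ was an arbitrary nonzero homogeneous element of degree $g$, this shows $R_{g}\subseteq K$ for every $g\in supp(R,G)$, and trivially $R_{g}=\{0\}\subseteq K$ otherwise. Summing over $g$ gives $R=\bigoplus_{g\in G}R_{g}\subseteq K$, which contradicts $R=K[x_{1},\ldots,x_{m}]\supsetneq K$ (here $m\geq1$). Therefore $(R,G)$ cannot be weak. The one point that needs care---and the only place where the hypothesis $R_{e}\subseteq K$ and the polynomial structure really enter---is the step that converts membership in $R_{e}$ into a total-degree statement: a $G$-homogeneous component $a_{g}$ need not itself be homogeneous for the total-degree grading, so I deliberately avoid assigning it a degree directly and instead read off $\deg(a_{g})=0$ from the additivity of $\deg$ applied to the product $a_{g}b$, which lands in $K$ precisely because pairing $g$ with $g^{-1}$ lands in $R_{e}$.
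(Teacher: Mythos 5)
Your proof is correct, and it takes a genuinely different route from the paper's. You argue directly: weakness guarantees that each $g\in supp(R,G)$ has $R_{g^{-1}}\neq\{0\}$, the product of a nonzero $a_{g}\in R_{g}$ with a nonzero $b\in R_{g^{-1}}$ is a nonzero element of $R_{e}\subseteq K$, and additivity of total degree in the domain $K[x_{1},\ldots,x_{m}]$ then forces $\deg(a_{g})=0$; summing over $g$ collapses $R$ into $K$, which is absurd for $m\geq 1$. The paper proceeds quite differently: it first upgrades the hypothesis $R_{e}\subseteq K$ to the statement that $R_{e}$ is a \emph{subfield} of $K$ (by decomposing $k^{-1}$ into homogeneous components), then invokes Lemma \ref{16} and its corollary --- a weakly graded domain whose identity component is a division ring is simple, hence Artinian --- and finally derives the contradiction by exhibiting a non-stabilizing descending chain of ideals $I_{a}=\left\langle rf^{n+a}:r\in R,\ n\in\mathbb{N}\right\rangle$ built from a nonconstant homogeneous element $f$, showing the polynomial ring is not Artinian. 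Your argument is more elementary and self-contained: it needs neither the preliminary fact that $R_{e}$ is a field nor any chain-condition machinery, only the identity $\deg(fg)=\deg(f)+\deg(g)$, and it yields the sharper conclusion that weakness would force $R_{g}\subseteq K$ for every $g$, i.e.\ $R=K$. What the paper's longer detour buys is Lemma \ref{16} itself, a structural result of independent interest (simplicity and the Artinian/Noetherian property of such weakly graded rings) that fits the Dade-style program of the section. One cosmetic remark: your closing caveat about not assigning a total degree to $a_{g}$ directly is unnecessary, since total degree is defined and additive for \emph{all} nonzero polynomials over a domain, homogeneous or not; but as your deduction only uses additivity applied to the product $a_{g}b$, nothing is at stake.
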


\begin{proof} Firstly, we prove that $R_{e}$ is a subfield of $K$. Clearly, $R_{e}$ is a commutative ring with unity. Let $k\in R_{e}-\{0\}$ and $k^{-1}=r_{g_{1}}+r_{g_{2}}+....+r_{g_{n}}$ where $r_{g_{i}}\in R_{g_{i}}-\{0\}$. Then $1=kk^{-1}=kr_{g_{1}}+kr_{g_{2}}+....+kr_{g_{n}}$ with $kr_{g_{i}}\in R_{g_{i}}$ and $1\in R_{e}$. So, $n=1$ and $g_{1}=e$. Hence, $k^{-1}\in R_{e}$. Let $f$ be a nonconstant homogeneous element of $R$. Then $I_{a}=\left\langle rf^{n+a}:r\in R,n\in \mathbb{N}\right\rangle$ ($a\in \mathbb{N}$) is an ideal of $R$ with $I_{1}\supset I_{2}\supset....$ but $f^{n}\notin I_{n}$ for all $n\in \mathbb{N}$; let $a\in \mathbb{N}$ and $g\in I_{a+1}$. Then $g=rf^{n+(a+1)}$ for some $r\in R$ and $n\in \mathbb{N}$ and then $g=rf^{(n+1)+a}\in I_{a}$. Suppose $f^{n}\in I_{n}$ for some $n\in \mathbb{N}$. Then $f^{n}=rf^{t+n}$ for some $t\in \mathbb{N}$ and then since $R$ is integral domain, $1=rf^{t}$ a contradiction since $t\in \mathbb{N}$ and $f$ is nonconstant. Therefore, there is no $s\in \mathbb{N}$ with $I_{n}=I_{s}$ for all $n\geq s$, i.e., $R$ is not Artinian. Hence, by Lemma \ref{16}, $(R,G)$ is not weak.
\end{proof}

\begin{cor} Let $R=K[x_{1},x_{2},....x_{m}]$ be a $G$-graded. If $R_{e}\subseteq K$, then $(R,G)$ is not strong.
\end{cor}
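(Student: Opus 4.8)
The plan is to derive this corollary immediately from Proposition~\ref{17} together with the implication ``strong $\Rightarrow$ weak'' that was already established as Corollary~\ref{14}. The argument is purely by contraposition, so essentially no new work is required beyond correctly chaining the two earlier results.

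First I would observe that the hypotheses here are identical to those of Proposition~\ref{17}: we take $R=K[x_{1},x_{2},\dots,x_{m}]$ graded by $G$ with $R_{e}\subseteq K$. Under exactly these hypotheses, Proposition~\ref{17} guarantees that $(R,G)$ is \emph{not} weak. I would then recall Corollary~\ref{14}, which asserts that whenever $(R,G)$ is strong, it is necessarily weak.

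The conclusion follows by a one-line contrapositive. Suppose, for contradiction, that $(R,G)$ were strong. Then Corollary~\ref{14} would force $(R,G)$ to be weak, contradicting Proposition~\ref{17}, which says $(R,G)$ is not weak. Hence $(R,G)$ cannot be strong.

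I do not expect any genuine obstacle here, since the corollary is a formal consequence of results proved just above it; the only point requiring care is to confirm that Proposition~\ref{17} applies verbatim (i.e.\ that the condition $R_{e}\subseteq K$ is the shared hypothesis) before invoking it. In effect, all of the substantive content—the non-Artinian argument showing $(R,G)$ fails to be weak—lives in Proposition~\ref{17}, and this corollary merely transports that conclusion along the chain strong $\Rightarrow$ weak.
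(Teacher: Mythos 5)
Your proposal is correct and coincides exactly with the paper's own proof: the paper also argues by contradiction, invoking Corollary~\ref{14} (strong $\Rightarrow$ weak) and Proposition~\ref{17} ($(R,G)$ is not weak when $R_{e}\subseteq K$). No gaps; nothing further needed.
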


\begin{proof} If $(R,G)$ is strong, then by Corollary \ref{14}, $(R,G)$ is weak and this contradicts Proposition \ref{17}. Hence, $(R,G)$ is not strong.
\end{proof}

\section{Invertible Graded Rings}

In this section, we introduce the concept of invertible graded rings and we study this concept assuming that $R$ is a weakly graded domain.

\begin{defn} Suppose that $R$ is a $G$-graded ring. Then $(R,G)$ is said to be invertible if the identity component $R_{e}$ is a field.
\end{defn}

\begin{exa}\label{2.2} Consider $R=\mathbb{C}$ (the field of complex numbers) and $G=\mathbb{Z}_{2}$. Then $R$ is $G$-graded by $R_{0}=\mathbb{R}$ and $R_{1}=i\mathbb{R}$. Since $R_{0}$ is a field, $(R,G)$ is invertible.
\end{exa}

The next example shows that if $(R,G)$ is invertible, then $R$ need not to be a field in general.

\begin{exa}\label{2.3} Consider $R=K[x]$ (where $K$ is a field) and $G=\mathbb{Z}$. Then $R$ is $G$-graded by $R_{j}=Kx^{j}$ for $j\geq0$ and $R_{j}=\{0\}$ otherwise. Since $R_{0}=K$ is a field, $(R,G)$ is invertible. However, $R$ it self is not a field.
\end{exa}

\begin{lem}\label{2.4} Suppose that $R$ is a $G$-graded ring. If $g\in G$ and $r\in R_{g}$ is an unit, then $r^{-1}\in R_{g^{-1}}$.
\end{lem}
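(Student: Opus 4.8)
The plan is to exploit the uniqueness of the homogeneous decomposition together with the uniqueness of inverses in a ring. Since $r\in R_{g}$ is a unit, let $s=r^{-1}$ be its (unique) two-sided inverse, and write $s=\sum_{h\in G}s_{h}$ with $s_{h}\in R_{h}$. First I would expand the relation $rs=1$ as $1=\sum_{h\in G}rs_{h}$. Because $r\in R_{g}$ and $s_{h}\in R_{h}$, each term satisfies $rs_{h}\in R_{gh}$, and the map $h\mapsto gh$ is injective, so these products land in distinct graded components. Comparing with the decomposition $1\in R_{e}$ and using uniqueness of components, every component must vanish except the one in $R_{e}$, which forces $rs_{g^{-1}}=1$ (the index $h=g^{-1}$ being the unique solution of $gh=e$) and $rs_{h}=0$ for all $h\neq g^{-1}$.

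Next I would run the symmetric argument on $sr=1$, writing $1=\sum_{h\in G}s_{h}r$ with $s_{h}r\in R_{hg}$; the same reasoning yields $s_{g^{-1}}r=1$. Thus $s_{g^{-1}}$ is a two-sided inverse of $r$. The cleanest way to finish is then a one-line cancellation: $s=s\cdot 1=s\,(rs_{g^{-1}})=(sr)\,s_{g^{-1}}=1\cdot s_{g^{-1}}=s_{g^{-1}}$, so $s=s_{g^{-1}}\in R_{g^{-1}}$, i.e.\ $r^{-1}\in R_{g^{-1}}$. Alternatively one may simply invoke uniqueness of the inverse in a ring, since both $s$ and $s_{g^{-1}}$ are two-sided inverses of $r$.

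I do not expect any serious obstacle here; the only point requiring care is the bookkeeping of indices, namely observing that $h\mapsto gh$ (and $h\mapsto hg$) are bijections of $G$, so that the products $rs_{h}$ (respectively $s_{h}r$) occupy pairwise distinct homogeneous components and the uniqueness of the decomposition of $1$ can be applied componentwise. No hypothesis beyond $R$ being $G$-graded with $r$ a unit is needed, so invertibility or weakness of $(R,G)$ plays no role in this particular statement.
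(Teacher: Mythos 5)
Your proof is correct, but it takes a genuinely more self-contained route than the paper's. The paper's proof is a two-step affair: it first \emph{cites} Proposition 1.1.1 of Nastasescu and Van Oystaeyen for the fact that the inverse of a homogeneous unit is itself homogeneous, so that $r^{-1}\in R_{h}$ for some $h\in G$, and only then pins down the degree by observing that $0\neq rr^{-1}=1\in R_{e}\bigcap R_{gh}$, which forces $gh=e$ since distinct homogeneous components intersect trivially. You instead reprove the homogeneity of $r^{-1}$ from scratch: writing $s=r^{-1}=\sum_{h\in G}s_{h}$, using the bijectivity of $h\mapsto gh$ and $h\mapsto hg$ so that the terms of $rs$ and of $sr$ occupy pairwise distinct components, and extracting $rs_{g^{-1}}=1=s_{g^{-1}}r$ componentwise from $1\in R_{e}$; uniqueness of inverses (or your one-line cancellation $s=s(rs_{g^{-1}})=(sr)s_{g^{-1}}=s_{g^{-1}}$) then yields $r^{-1}=s_{g^{-1}}\in R_{g^{-1}}$. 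In effect you have inlined the proof of the cited proposition: your argument is fully self-contained where the paper's is not, at the cost of some index bookkeeping, while the paper's version, by outsourcing homogeneity, reduces the lemma to the single observation that $R_{e}\bigcap R_{gh}\neq\{0\}$ implies $gh=e$. Both arguments are valid and use only the graded-ring axioms together with $1\in R_{e}$ (which the paper records in the introduction), and your closing remark that neither invertibility nor weakness of $(R,G)$ plays any role here is also correct.
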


\begin{proof} By (\cite{Nastasescue}, Proposition 1.1.1), $r^{-1}\in R_{h}$ for some $h\in G$ and then $rr^{-1}\in R_{g}R_{h}\subseteq R_{gh}$. On the other hand, $rr^{-1}=1\in R_{e}$. So, $0\neq rr^{-1}\in R_{e}\bigcap R_{gh}$ and then $gh=e$, i.e., $h=g^{-1}$. Hence, $r^{-1}\in R_{g^{-1}}$.
\end{proof}

\begin{lem}\label{2.5} Let $R$ be a weakly $G$-graded domain. If $(R,G)$ is invertible, then every nonzero homogeneous element is unit.
\end{lem}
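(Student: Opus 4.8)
The plan is to fix an arbitrary nonzero homogeneous element and manufacture a two-sided inverse for it out of the opposite graded component, using that $R_{e}$ is a field to do the final inversion. So I would begin by taking $g\in G$ and $a_{g}\in R_{g}-\{0\}$, with the goal of showing $a_{g}$ is a unit. Since $a_{g}\neq 0$ we have $g\in supp(R,G)$. Because $R$ is a domain and $(R,G)$ is weak, Proposition \ref{9} guarantees that $supp(R,G)$ is a subgroup of $G$; hence $g^{-1}\in supp(R,G)$, i.e. $R_{g^{-1}}\neq\{0\}$. (This step is exactly where weakness is indispensable: without it, $R_{g}$ could be nonzero while $R_{g^{-1}}=\{0\}$, as in Example \ref{5}, and no candidate inverse would exist.)

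Next I would choose any $b\in R_{g^{-1}}-\{0\}$ and look at the two products $a_{g}b\in R_{g}R_{g^{-1}}\subseteq R_{e}$ and $ba_{g}\in R_{g^{-1}}R_{g}\subseteq R_{e}$. Since $R$ has no zero divisors, both $a_{g}b$ and $ba_{g}$ are nonzero, so they are nonzero elements of the field $R_{e}$ and therefore invertible with inverses lying in $R_{e}$. Setting $u=(a_{g}b)^{-1}\in R_{e}$ and $v=(ba_{g})^{-1}\in R_{e}$, I get $a_{g}(bu)=(a_{g}b)u=1$, so $bu\in R_{g^{-1}}$ is a right inverse of $a_{g}$, and $(vb)a_{g}=v(ba_{g})=1$, so $vb\in R_{g^{-1}}$ is a left inverse of $a_{g}$.

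Finally, the usual one-line cancellation $vb=(vb)\big(a_{g}(bu)\big)=\big((vb)a_{g}\big)(bu)=bu$ shows that the left and right inverses coincide, whence $a_{g}$ is a unit (with inverse in $R_{g^{-1}}$, consistent with Lemma \ref{2.4}). I expect the genuine content of the argument to be concentrated in the first step, namely securing $R_{g^{-1}}\neq\{0\}$; once that is in hand, the domain hypothesis forces $a_{g}b$ and $ba_{g}$ to be nonzero and the field hypothesis on $R_{e}$ makes the rest routine. The only mild care needed is to produce a genuine two-sided inverse rather than merely a one-sided one, which the short cancellation above handles.
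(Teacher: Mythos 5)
Your proof is correct and follows essentially the same route as the paper's: weakness gives $R_{g^{-1}}\neq\{0\}$, the absence of zero divisors makes the product of $a_{g}$ with a nonzero element of $R_{g^{-1}}$ a nonzero element of $R_{e}$, and the field hypothesis on $R_{e}$ supplies the inverse. The only cosmetic differences are that the paper packages this as the nonzero ideal $I=R_{g^{-1}}r$ of $R_{e}$, forcing $I=R_{e}$ and $1=xr$ for some $x\in R_{g^{-1}}$, whereas you work with a single $b\in R_{g^{-1}}-\{0\}$ and explicitly check the inverse is two-sided (a point the paper leaves implicit), and that you route $R_{g^{-1}}\neq\{0\}$ through Proposition \ref{9} rather than applying the definition of weakness directly at $g^{-1}$.
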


\begin{proof} Let $g\in G$ and $r\in R_{g}-\{0\}$. Then since $(R, G)$ is weak, $R_{g^{-1}}\neq\{0\}$ and then since $R$ is a domain, $I=R_{g^{-1}}r$ is a nonzero ideal of $R_{e}$ and since $R_{e}$ is a field, $I=R_{e}$ and then $1\in I$. It follows that $1=xr$ for some $x\in R_{g^{-1}}$ and hence $r$ is unit.
\end{proof}

\begin{prop}\label{2.6} Let $R$ be a weakly $G$-graded domain. If $(R,G)$ is invertible, then $R_{g}$ is cyclic $R_{e}$-module for all $g\in G$.
\end{prop}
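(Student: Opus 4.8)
The plan is to treat the trivial and nontrivial degrees separately and, in the nontrivial case, to exhibit an explicit generator of $R_{g}$ as an $R_{e}$-module. First note that $R_{g}$ really is an $R_{e}$-module, since $R_{e}R_{g}\subseteq R_{eg}=R_{g}$. If $R_{g}=\{0\}$, then $R_{g}$ is cyclic (generated by $0$) and there is nothing to prove, so I may assume $g\in supp(R,G)$ and fix some $r\in R_{g}-\{0\}$.

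The heart of the argument is to show $R_{g}=R_{e}r$. Because $R$ is a weakly graded domain and $(R,G)$ is invertible, Lemma \ref{2.5} guarantees that $r$ is a unit, and Lemma \ref{2.4} then locates its inverse in the correct component, namely $r^{-1}\in R_{g^{-1}}$. Now take an arbitrary $s\in R_{g}$. The product $sr^{-1}$ lies in $R_{g}R_{g^{-1}}\subseteq R_{gg^{-1}}=R_{e}$, so $sr^{-1}=a$ for some $a\in R_{e}$; multiplying on the right by $r$ yields $s=ar\in R_{e}r$. This establishes $R_{g}\subseteq R_{e}r$, while the reverse inclusion $R_{e}r\subseteq R_{e}R_{g}\subseteq R_{g}$ is immediate. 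Hence $R_{g}=R_{e}r$ is a cyclic $R_{e}$-module generated by $r$.

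I expect the only delicate point to be ensuring that the computation $sr^{-1}\in R_{e}$ produces a \emph{left}-module generator rather than a right-module one: dividing on the right by the homogeneous unit $r$ is exactly what turns a degree-$g$ element into a scalar of $R_{e}$, and it is Lemma \ref{2.4} that supplies $r^{-1}\in R_{g^{-1}}$ so the degree bookkeeping closes up correctly. Beyond invoking these two lemmas there is no real obstacle; the weak hypothesis is precisely what allows Lemma \ref{2.5} to apply, since it forces $R_{g^{-1}}\neq\{0\}$ whenever $R_{g}\neq\{0\}$.
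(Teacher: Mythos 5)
Your proof is correct and follows essentially the same route as the paper's: both handle $R_{g}=\{0\}$ trivially, invoke Lemma \ref{2.5} to make a nonzero $r\in R_{g}$ a unit and Lemma \ref{2.4} to place $r^{-1}$ in $R_{g^{-1}}$, and then establish $R_{g}=R_{e}r$ via the computation $x=xr^{-1}r\in R_{g}R_{g^{-1}}r\subseteq R_{e}r$ together with the reverse inclusion $R_{e}r\subseteq R_{e}R_{g}\subseteq R_{g}$. Your extra remark on the left/right bookkeeping is a harmless refinement of the same argument.
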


\begin{proof} Let $g\in G$. If $R_{g}=\{0\}$, then it is done. Suppose that $R_{g}\neq\{0\}$. Then there exists a nonzero $r\in R_{g}$ and then by Lemma \ref{2.4} and Lemma \ref{2.5}, $r$ is unit with $r^{-1}\in R_{g^{-1}}$. Let $x\in R_{g}$. Then $x=x.1=xr^{-1}r\in R_{g}R_{g^{-1}}r\subseteq R_{e}r$ and then $R_{g}\subseteq R_{e}r\subseteq R_{e}R_{g}\subseteq R_{g}$. Hence, $R_{g}=R_{e}r$, i.e., $R_{g}$ is cyclic $R_{e}$-module.
\end{proof}

\begin{cor}\label{2.7} Let $R$ be a weakly $G$-graded domain. If $(R,G)$ is invertible and $supp(R,G)$ is finite, then $R$ is Noetherian.
\end{cor}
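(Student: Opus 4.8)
The plan is to show that $R$ is a finite-dimensional vector space over the field $R_{e}$, and then deduce the ascending chain condition on ideals from the boundedness of $R_{e}$-dimension. Since every ideal of $R$ is in particular an $R_{e}$-subspace of $R$, once we know $\dim_{R_{e}}R<\infty$ the Noetherian property follows immediately: a strictly ascending chain of ideals yields a strictly ascending chain of $R_{e}$-subspaces, whose dimensions strictly increase while staying bounded by $\dim_{R_{e}}R$, forcing the chain to stabilize.

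First I would use the grading decomposition $R=\bigoplus_{g\in G}R_{g}$ and discard the zero components to write $R=\bigoplus_{g\in supp(R,G)}R_{g}$, a finite direct sum by the hypothesis that $supp(R,G)$ is finite. Next, for each $g\in supp(R,G)$, Proposition \ref{2.6} gives that $R_{g}$ is a cyclic $R_{e}$-module, say $R_{g}=R_{e}r$ for some nonzero homogeneous $r\in R_{g}$; by Lemma \ref{2.5} this generator $r$ is a unit. The $R_{e}$-linear map $R_{e}\to R_{g}$ sending $a\mapsto ar$ is surjective by cyclicity and injective because $R$ is a domain and $r\neq0$, so $R_{g}\cong R_{e}$ as $R_{e}$-modules. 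Combining these isomorphisms over the finitely many $g\in supp(R,G)$ shows that $R$ is a free $R_{e}$-module of finite rank $n=|supp(R,G)|$; since $R_{e}$ is a field, $R$ is an $n$-dimensional $R_{e}$-vector space.

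With $\dim_{R_{e}}R=n<\infty$ established, I would finish by the dimension-counting argument above, concluding that $R$ satisfies the ascending chain condition on ideals and is therefore Noetherian. The main obstacle is not the final chain-condition argument, which is routine, but rather assembling the structural description of $R$ as a finite-rank free $R_{e}$-module: the crux is verifying that each homogeneous component contributes exactly one free $R_{e}$-generator, which relies essentially on the domain hypothesis (to make the generator map injective) together with invertibility and weakness (to guarantee, via Lemma \ref{2.5} and Proposition \ref{2.6}, that the generator is a unit and the component is cyclic). One may optionally note via Proposition \ref{9} that $supp(R,G)$ is actually a subgroup of $G$, which clarifies the structure of the support, though it is not needed for the counting.
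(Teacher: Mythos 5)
Your proposal is correct and follows essentially the same route as the paper: both decompose $R=\bigoplus_{g\in supp(R,G)}R_{g}$ as a finite direct sum, invoke Proposition \ref{2.6} to see each component is a cyclic $R_{e}$-module over the field $R_{e}$, and conclude $R$ is Noetherian because every ideal is an $R_{e}$-submodule. The only cosmetic difference is your finish: you upgrade each $R_{g}$ to a free rank-one $R_{e}$-module (using the domain hypothesis for injectivity, which is not actually needed, since a cyclic module over a field is already at most one-dimensional) and count dimensions, whereas the paper simply notes each cyclic $R_{e}$-module is Noetherian and sums up.
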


\begin{proof} Since supp$(R,G)$ is finite, $R=\bigoplus_{i=1}^{n}R_{g_{i}}$. By Proposition \ref{2.6}, $R_{g_{i}}$ is cyclic $R_{e}$-module for all $i\leq i\leq n$. On the other hand, since $R_{e}$ is a field, $R_{e}$ is Noetherian. So, each $R_{g_{i}}$ is Noetherian and hence $R$ is Noetherian.
\end{proof}

\begin{prop}\label{2.8} Let $R$ be a weakly $G$-graded domain. If $(R,G)$ is invertible, then $R_{g}$ is simple $R_{e}$ module for all $g\in G$.
\end{prop}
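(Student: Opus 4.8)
The plan is to reduce everything to the observation that $R_g \cong R_e$ as an $R_e$-module whenever $R_g \neq \{0\}$, after which simplicity is immediate because a field is a simple module over itself. The substantive content lies entirely in the case $g \in supp(R,G)$; when $R_g = \{0\}$ the assertion is to be read in the degenerate sense, so I would dispose of that case in a single sentence and concentrate on $g$ with $R_g \neq \{0\}$.

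For such $g$, I would fix a nonzero element $r \in R_g$. By Lemma \ref{2.5} this $r$ is a unit, and by Proposition \ref{2.6} we have $R_g = R_e r$. The key step is then to consider the $R_e$-linear map $\varphi : R_e \to R_g$ given by $\varphi(a) = ar$. Surjectivity of $\varphi$ is precisely the equality $R_g = R_e r$ supplied by Proposition \ref{2.6}. For injectivity, if $ar = 0$ with $a \in R_e$, then since $R$ is a domain and $r \neq 0$ we must have $a = 0$. Thus $\varphi$ is an isomorphism of $R_e$-modules, giving $R_g \cong R_e$.

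To finish, I would invoke the fact that $R_e$ is a field, so its only ideals (equivalently, its only $R_e$-submodules) are $\{0\}$ and $R_e$; hence $R_e$ is a simple $R_e$-module, and transporting this property across the isomorphism $\varphi$ yields that $R_g$ is simple. An equivalent direct route avoids the isomorphism: given a nonzero $R_e$-submodule $N \subseteq R_g$, pick $0 \neq x \in N$, write $x = ar$ with $a \in R_e$ and note $a \neq 0$ since $R$ is a domain and $r \neq 0$; then $a^{-1} \in R_e$ forces $r = a^{-1}x \in N$, so $R_g = R_e r \subseteq N$ and $N = R_g$.

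I do not expect a genuine obstacle, since the real work has already been carried out in Lemma \ref{2.5} and Proposition \ref{2.6}. The only points demanding care are the correct handling of the trivial component $R_g = \{0\}$ and the verification that $\varphi$ is injective, which is exactly where the domain hypothesis is used.
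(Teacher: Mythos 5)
Your proposal is correct, and both of its variants reach the same conclusion the paper does, but the finishing mechanism differs from the paper's. The paper's proof, after invoking Proposition \ref{2.6} to write $R_{g}=R_{e}r$, takes a nonzero $R_{e}$-submodule $N$ of $R_{g}$, forms the ideal $I=\{x\in R_{e}:xr\in N\}$ of $R_{e}$, verifies $N=Ir$, and concludes $I=R_{e}$ (hence $N=R_{g}$) because $R_{e}$ is a field; notably, it never uses the domain hypothesis or the fact that $r$ is a unit inside this proof --- those are consumed only within Proposition \ref{2.6} itself. Your primary route instead builds the explicit map $\varphi:R_{e}\rightarrow R_{g}$, $\varphi(a)=ar$, using the domain hypothesis for injectivity, and transports simplicity across the isomorphism; this proves the strictly stronger statement $R_{g}\cong R_{e}$ as $R_{e}$-modules (information the paper only extracts later, in the weakly crossed product setting), at the cost of an extra appeal to the domain hypothesis. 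Your ``direct route'' is essentially the paper's argument compressed: given $0\neq x\in N$ with $x=ar$, you invert $a$ in the field $R_{e}$ rather than passing through the ideal $I$. Two small attributions in your write-up are superfluous: in the direct route, $a\neq0$ follows immediately from $x=ar\neq0$ (no domain hypothesis needed there), and Lemma \ref{2.5} (that $r$ is a unit) is not actually used in either of your arguments --- surjectivity of $\varphi$ comes from Proposition \ref{2.6} and injectivity from the domain property, so the citation of Lemma \ref{2.5} can be dropped.
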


\begin{proof} Let $g\in G$. If $R_{g}=\{0\}$, then it is done. Suppose that $R_{g}\neq\{0\}$. Then by Proposition \ref{2.6}, $R_{g}=R_{e}r$ for some $r\in R_{g}$. Let $N$ be a nonzero $R_{e}$-submodule of $R_{g}$ and let $I=\{x\in R_{e}:xr\in N\}$. Then clearly, $I$ is an ideal of $R_{e}$. We show that $N=Ir$. Let $n\in N\subseteq R_{g}$. Then $n=xr$ for some $x\in R_{e}$ and then $x\in I$ and hence $n\in Ir$. Thus, $N\subseteq Ir$. Let $n\in Ir$. Then $n=xr$ for some $x\in I$. Since $x\in I$, $xr\in N$ and then $n\in N$. Thus, $Ir\subseteq N$. Therefore, $N=Ir$. Now, if $I=\{0\}$, then $N=\{0\}$ a contradiction. So, $I$ is a nonzero ideal of $R_{e}$ and since $R_{e}$ is a field, $I=R_{e}$ and then $N=R_{e}r=R_{g}$. Hence, $R_{g}$ is simple $R_{e}$-module.
\end{proof}

If $(R,G)$ is invertible, then $R$ need not to be strongly $G$-graded. To see this, in Example \ref{2.3}, $(R,G)$ is invertible but $R$ is not strongly $G$-graded since $1\notin R_{1}R_{-1}=\{0\}$. Also, if $R$ is strongly $G$-graded, then $(R,G)$ need not to be invertible, see the following example.

\begin{exa}\label{2.9} Consider $R=\mathbb{Z}[i]$ and $G=\mathbb{Z}_{2}$. Then $R$ is $G$-graded by $R_{0}=\mathbb{Z}$ and $R_{1}=i\mathbb{Z}$. Since $1\in R_{0}R_{0}$ and $1\in R_{1}R_{1}$, $R$ is strongly $G$-graded. However, $(R,G)$ is not invertible since $R_{0}=\mathbb{Z}$ is not a field.
\end{exa}

In fact, we prove that every invertible weakly graded domain is first strongly graded.

\begin{prop}\label{2.10} Let $R$ be a weakly $G$-graded domain. If $(R,G)$ is invertible, then $R$ is first strongly $G$-graded.
\end{prop}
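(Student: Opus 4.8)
The plan is to verify the definition of first strongly graded directly, namely to show that $1\in R_{g}R_{g^{-1}}$ for every $g\in supp(R,G)$. The two key ingredients I need, Lemma \ref{2.5} and Lemma \ref{2.4}, have already been established for a weakly $G$-graded domain with $R_{e}$ a field, so the argument reduces to combining them in the right order.

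First I would fix an arbitrary $g\in supp(R,G)$, which by definition means $R_{g}\neq\{0\}$, and choose any nonzero element $r\in R_{g}$. Since $R$ is a weakly $G$-graded domain and $(R,G)$ is invertible, Lemma \ref{2.5} guarantees that $r$ is a unit of $R$. Then Lemma \ref{2.4} pins down the location of its inverse, giving $r^{-1}\in R_{g^{-1}}$. From here the conclusion is immediate: $1=rr^{-1}\in R_{g}R_{g^{-1}}$, which is precisely the defining condition for $(R,G)$ to be first strong, and since $g$ was an arbitrary element of $supp(R,G)$ this finishes the proof.

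The main point to be careful about is not a genuine obstacle but a bookkeeping matter: essentially all of the real work has been front-loaded into Lemmas \ref{2.4} and \ref{2.5}, so the present proposition is in effect their corollary. It is worth noting that the weak hypothesis enters only implicitly, through Lemma \ref{2.5}, where it is used to ensure $R_{g^{-1}}\neq\{0\}$ and hence the invertibility of $r$. As a consistency check one could also observe that, by Proposition \ref{9}, $supp(R,G)$ is already a subgroup of $G$, which matches the known characterization of first strong rings in terms of the support being a subgroup together with $R_{g}R_{h}=R_{gh}$ on the support.
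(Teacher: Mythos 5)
Your proof is correct and is essentially identical to the paper's: the paper likewise picks a nonzero $r\in R_{g}$ for $g\in supp(R,G)$, invokes Lemma \ref{2.5} to get that $r$ is a unit and Lemma \ref{2.4} to place $r^{-1}$ in $R_{g^{-1}}$, and concludes $1=rr^{-1}\in R_{g}R_{g^{-1}}$. Your added remarks on where the weak hypothesis enters and the consistency check via Proposition \ref{9} are accurate but not needed.
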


\begin{proof} Let $g\in supp(R, G)$. Then there exists a nonzero $r\in R_{g}$ and then by Lemma \ref{2.4} and Lemma \ref{2.5}, $r$ is unit with $r^{-1}\in R_{g^{-1}}$. So, $1=rr^{-1}\in R_{g}R_{g^{-1}}$ and hence $R$ is first strongly $G$-graded.
\end{proof}

 \begin{defn}(\cite{Nastasescue}) A graded ring $R$ is said to be graded simple if the only graded ideals of $R$ are $\{0\}$ and $R$ itself.
 \end{defn}

\begin{prop}\label{2.12} Let $R$ be a weakly $G$-graded domain. If $(R,G)$ is invertible, then $R$ is graded simple.
\end{prop}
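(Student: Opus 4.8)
The plan is to take an arbitrary nonzero graded ideal $I$ of $R$ and show that it must equal all of $R$. Since $I$ is a graded ideal, we have $I=\bigoplus_{g\in G}(I\bigcap R_{g})$, so if $I\neq\{0\}$ there must exist some $g\in G$ with $I\bigcap R_{g}\neq\{0\}$; that is, $I$ contains a nonzero homogeneous element $r\in R_{g}$.

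The key step is to invoke Lemma \ref{2.5}: because $R$ is a weakly $G$-graded domain and $(R,G)$ is invertible, every nonzero homogeneous element is a unit. Hence the element $r$ above is a unit, and since $I$ is an ideal we get $1=r^{-1}r\in I$, which forces $I=R$. Consequently the only graded ideals of $R$ are $\{0\}$ and $R$, so $R$ is graded simple.

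I do not expect any genuine obstacle here, since the substantive content has already been packaged into Lemma \ref{2.5}, where both the weak and invertible hypotheses (together with the domain assumption) are used to promote nonzero homogeneous elements to units. The only thing to be careful about is the elementary but essential observation that a nonzero graded ideal is forced, by its very definition as the direct sum of its homogeneous components, to contain a nonzero homogeneous element; this is what lets Lemma \ref{2.5} apply.

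As an alternative route, one could instead deduce the statement directly from Lemma \ref{16}, which under the same hypotheses shows that $R$ is in fact simple (not merely graded simple), and every graded ideal is in particular an ideal. However, the direct argument via Lemma \ref{2.5} is shorter and keeps the reasoning entirely within the graded setting.
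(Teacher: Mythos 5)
Your proof is correct, but it takes a genuinely different (and shorter) route than the paper. The paper's own proof picks $g$ with $I_{g}=I\bigcap R_{g}\neq\{0\}$, invokes Proposition \ref{2.8} to say $R_{g}$ is a simple $R_{e}$-module so that $I_{g}=R_{g}$, hence $R_{g}\subseteq I$, and then uses Proposition \ref{2.10} (first strongness) to conclude $1\in R_{e}=R_{g^{-1}}R_{g}\subseteq R_{g^{-1}}I\subseteq I$, so $I=R$. You instead apply Lemma \ref{2.5} directly: the nonzero homogeneous element $r\in I\bigcap R_{g}$ is a unit, so $1=r^{-1}r\in I$ at once. Since Propositions \ref{2.8} and \ref{2.10} are themselves consequences of Lemma \ref{2.5} (via Proposition \ref{2.6}), your argument short-circuits the paper's chain of intermediate results; what the paper's longer version buys is a demonstration of how the module-theoretic facts (simplicity of each $R_{g}$ over $R_{e}$, first strongness) combine, which fits its surrounding development, while your version isolates the single fact actually needed. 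Your alternative remark is also sound and in fact stronger: Lemma \ref{16} applies because invertible means $R_{e}$ is a field (a division ring), a domain has no zero divisors, and weakness is hypothesized, so $R$ is simple outright, and graded simplicity follows since every graded ideal is an ideal. One small point you handled correctly and should keep explicit: obtaining $1\in I$ from $r\in I$ uses only that $I$ is a left ideal, and the unit produced by Lemma \ref{2.5} is two-sided in a domain, so no issue arises even without commutativity.
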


\begin{proof} Let $I$ be a nonzero graded ideal of $R$. Then there exists $g\in G$ such that $I_{g}\neq\{0\}$, i.e., $I_{g}$ is a nonzero $R_{e}$-submodule of $R_{g}$. By Proposition \ref{2.8}, $R_{g}$ is simple $R_{e}$-module and then $I_{g}=R_{g}$. On the other hand, $I_{g}=I\bigcap R_{g}$ and then $R_{g}\subseteq I$. Now, by Proposition \ref{2.10}, $R$ is first strongly $G$-graded and then $1\in R_{e}=R_{g^{-1}}R_{g}\subseteq R_{g^{-1}}I\subseteq I$ and hence $I=R$. Therefore, $R$ is graded simple.
\end{proof}

In the rest of this section, we suppose that $R$ is a $G$-graded ring such that $(R,G)$ is invertible. Then we can consider $R$ as a vector space over $R_{e}$, i.e., $R_{e}$ is the field of scalars.

\begin{prop}\label{3.1} Suppose that $R$ is a non-trivially $G$-graded ring such that $(R,G)$ is invertible. Define $T:R\rightarrow R_{e}$ by $T(x)=x_{e}$. Then

\begin{enumerate}

\item $T$ is Linear transformation.

\item $KerT\bigcap R_{e}=\{0\}$.

\item $R_{g}\subseteq Ker(T)$ for all $g\in G-\{e\}$.

\item $T$ is not injective.

\item $T$ is surjective.
\end{enumerate}
\end{prop}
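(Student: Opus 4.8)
The map $T$ sends $x = \sum_{g \in G} x_g$ to its identity component $x_e$, and the whole proposition amounts to unpacking what this projection does. The plan is to handle the five items essentially in the order given, since the later parts lean on the earlier ones. For part (1), I would verify $R_e$-linearity directly from the uniqueness of the homogeneous decomposition: if $x = \sum_g x_g$ and $y = \sum_g y_g$, then $x+y = \sum_g (x_g + y_g)$ with $(x+y)_g = x_g + y_g$, so $T(x+y) = x_e + y_e = T(x)+T(y)$; and for a scalar $\lambda \in R_e$, the degree-$g$ component of $\lambda x$ is $\lambda x_g$ because $R_e R_g \subseteq R_g$, so $T(\lambda x) = \lambda x_e = \lambda T(x)$. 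The only subtlety is that scalars come from $R_e$ rather than an external field, but $R_e$ being a field (invertibility) is exactly what makes the vector-space structure legitimate, and $1 \in R_e$ guarantees $T$ fixes $R_e$ pointwise.

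Parts (2) and (3) are immediate structural observations. For (3), if $g \neq e$ and $x \in R_g$, then the identity component of $x$ is $0$, so $T(x) = 0$ and $R_g \subseteq \ker T$. For (2), if $x \in \ker T \cap R_e$, then $x \in R_e$ forces $x = x_e = T(x) = 0$, giving $\ker T \cap R_e = \{0\}$; equivalently, $T$ restricted to $R_e$ is the identity, which is injective.

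Parts (4) and (5) follow by combining these. For surjectivity (5), note $T$ restricted to $R_e$ is the identity on $R_e$, so every element of $R_e$ is hit; hence $T$ is surjective. For non-injectivity (4), this is where the hypothesis \emph{non-trivially graded} is essential: it says $R_g \neq \{0\}$ for some $g \neq e$, so by (3) there is a nonzero element lying in $\ker T$, whence $T$ has nontrivial kernel and is not injective. The main point to flag is that (4) genuinely needs the non-triviality assumption — for the trivial graduation $T$ would be the identity and hence injective — so I would make sure to invoke it explicitly rather than treating (4) as formal. None of the steps presents a real obstacle; the only care required is keeping the scalar field $R_e$ straight throughout and pinning down exactly where non-triviality enters.
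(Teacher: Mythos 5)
Your proposal is correct and follows essentially the same route as the paper's proof: componentwise verification of linearity (using uniqueness of the homogeneous decomposition and $R_{e}R_{g}\subseteq R_{g}$ for the scalar step, which the paper compresses into ``since $r\in R_{e}$''), the observation that $T$ restricts to the identity on $R_{e}$ for (2) and (5), and non-triviality producing a nonzero $R_{g}\subseteq Ker(T)$ for (4). Your explicit flag that (4) is the only place where the non-triviality hypothesis is used is accurate and matches the paper's argument.
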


\begin{proof}

\begin{enumerate}

\item Let $x,y\in R$ and $r\in R_{e}$. Then $T(x+y)=(x+y)_{e}=x_{e}+y_{e}=T(x)+T(y)$ and $T(rx)=(rx)_{e}=rx_{e}=rT(x)$ (since $r\in R_{e}$). Hence, $T$ is linear.

\item Let $x\in Ker(T)\bigcap R_{e}$. Then $x\in R_{e}$ with $T(x)=0$ and then $0=T(x)=x_{e}=x$ (since $x\in R_{e}$). Hence, $Ker(T)\bigcap R_{e}=\{0\}$.

\item Let $g\in G-\{e\}$ and $x\in R_{g}$. Then $T(x)=x_{e}=0$ (since $x\in R_{g}$ and $g\neq e$), i.e., $x\in Ker(T)$. Hence, $R_{g}\subseteq Ker(T)$.

\item Since $R$ is non-trivially graded, there exists $g\in G-\{e\}$ such that $R_{g}\neq \{0\}$ and then by (3), $\{0\}\neq R_{g}\subseteq Ker(T)$, i.e., $Ker(T)\neq \{0\}$ and hence $T$ is not injective.

\item Let $x\in R_{e}$. Then $x\in R$ such that $T(x)=x_{e}=x$ (since $x\in R_{e}$). Hence, $T$ is surjective.
\end{enumerate}
\end{proof}

\begin{lem}\label{3.2} If $(R,G)$ is invertible, then $R_{g}$ is a subspace of $R$ for all $g\in G$.
\end{lem}

\begin{proof} Let $g\in G$. Since $R_{g}$ is additive subgroup, $x+y\in R_{g}$ for all $x,y\in R_{g}$. Let $r\in R_{e}$ and $x\in R_{g}$. Then $rx\in R_{e}R_{g}\subseteq R_{g}$. Hence, $R_{g}$ is a subspace of $R$.
\end{proof}

\begin{prop}\label{3.3} If $(R,G)$ is invertible, then $(R_{g}+R_{h})/R_{g}\approx R_{h}$ for all $g,h\in G$, $g\neq h$.
\end{prop}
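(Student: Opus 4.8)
The plan is to recognize this statement as an instance of the second isomorphism theorem, now read inside the category of $R_{e}$-vector spaces. By Lemma \ref{3.2}, each of $R_{g}$ and $R_{h}$ is a subspace of $R$ over the field $R_{e}$, so $R_{g}+R_{h}$ is again a subspace and the quotient $(R_{g}+R_{h})/R_{g}$ is a well-defined $R_{e}$-vector space; thus the statement $(R_{g}+R_{h})/R_{g}\approx R_{h}$ is to be read as an isomorphism of $R_{e}$-vector spaces. Rather than invoke the isomorphism theorem as a black box, I would make the map explicit, since this keeps everything self-contained and makes the role of the hypothesis $g\neq h$ transparent.

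Concretely, I would define $\varphi:R_{h}\rightarrow (R_{g}+R_{h})/R_{g}$ by $\varphi(x)=x+R_{g}$ and check three things in order. First, linearity: $\varphi(x+y)=(x+y)+R_{g}=\varphi(x)+\varphi(y)$, and for $r\in R_{e}$ one has $\varphi(rx)=rx+R_{g}=r(x+R_{g})=r\varphi(x)$, using that $R_{e}$ is the field of scalars. Second, surjectivity: an arbitrary coset in $(R_{g}+R_{h})/R_{g}$ has the form $(a+b)+R_{g}$ with $a\in R_{g}$, $b\in R_{h}$, and since $a\in R_{g}$ we get $(a+b)+R_{g}=b+R_{g}=\varphi(b)$. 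Third, I would compute the kernel: $\mathrm{Ker}(\varphi)=\{x\in R_{h}:x\in R_{g}\}=R_{g}\cap R_{h}$.

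The one genuine point, where the hypothesis $g\neq h$ enters, is the identification $R_{g}\cap R_{h}=\{0\}$. This is exactly the directness of the grading: since $R=\bigoplus_{g\in G}R_{g}$, distinct homogeneous components meet only in $0$, so for $g\neq h$ we have $R_{g}\cap R_{h}=\{0\}$, hence $\mathrm{Ker}(\varphi)=\{0\}$ and $\varphi$ is injective. Combining the three steps, $\varphi$ is an $R_{e}$-linear bijection, and therefore $(R_{g}+R_{h})/R_{g}\approx R_{h}$. I do not expect a real obstacle here; the only substantive ingredient is the triviality of $R_{g}\cap R_{h}$ coming from the direct-sum decomposition, and the invertibility hypothesis is used only to guarantee, via Lemma \ref{3.2}, that we are working with genuine subspaces over the field $R_{e}$.
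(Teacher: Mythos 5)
Your proof is correct and follows essentially the same route as the paper: the paper also defines the map $T:R_{h}\rightarrow (R_{g}+R_{h})/R_{g}$, $T(x)=x+R_{g}$, proves surjectivity exactly as you do, and computes $\mathrm{Ker}(T)=R_{h}\bigcap R_{g}=\{0\}$ using $g\neq h$. The only cosmetic difference is that the paper invokes the first isomorphism theorem after showing the kernel is trivial, whereas you conclude directly that the map is a linear bijection; this is the same argument.
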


\begin{proof} Let $g,h\in G$ such that $g\neq h$. Clearly, $R_{g}$ is a subspace of $R_{g}+R_{h}$. Define $T:R_{h}\rightarrow (R_{g}+R_{h})/R_{g}$ by $T(x)=x+R_{g}$. Then it is easy to prove that $T$ is well-defined linear transformation. To prove that $T$ is surjective, let $y\in (R_{g}+R_{h})/R_{g}$. Then $y=r+R_{g}$ for some $r\in R_{g}+R_{h}$ and then $y=(a+b)+R_{g}$ for some $a\in R_{g}$ and $b\in R_{h}$ and so $y=b+R_{g}$ and hence $T(b)=y$. Therefore, $T$ is surjective. So, $R_{h}/Ker(T)\approx (R_{g}+R_{h})/R_{g}$. On the other hand, $Ker(T)=\{x\in R_{h}:x+R_{g}=\{0\}\}=\{x\in R_{h}:x\in R_{g}\}=R_{h}\bigcap R_{g}=\{0\}$ (since $g\neq h$). Thus, $(R_{g}+R_{h})/R_{g}\approx R_{h}/\{0\}\approx R_{h}$.
\end{proof}

\begin{prop}\label{3.4} If $(R,G)$ is invertible, then for every $g\in G$ there exists a linear transformation $T_{g}$ such that $Ker(T_{g})=R_{g}$.
\end{prop}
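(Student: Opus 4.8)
The plan is to exhibit an explicit $R_{e}$-linear endomorphism of $R$ that annihilates precisely the homogeneous component of degree $g$. Mirroring the construction of $T$ in Proposition \ref{3.1}, where the map $x\mapsto x_{e}$ was used, I would define $T_{g}:R\rightarrow R$ by $T_{g}(x)=x-x_{g}$, where $x_{g}$ denotes the component of $x$ in $R_{g}$. Here the ambient field of scalars is $R_{e}$, which is a field precisely because $(R,G)$ is invertible, so that speaking of a linear transformation makes sense.

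First I would verify that $T_{g}$ is a linear transformation over $R_{e}$. Additivity is immediate from the uniqueness of the graded decomposition, since $(x+y)_{g}=x_{g}+y_{g}$, and hence $T_{g}(x+y)=T_{g}(x)+T_{g}(y)$. For compatibility with scalars, the one point requiring care is that multiplication by $r\in R_{e}$ respects the grading: because $R_{e}R_{g}\subseteq R_{g}$, writing $x=\sum_{h\in G}x_{h}$ gives $rx=\sum_{h\in G}rx_{h}$ with $rx_{h}\in R_{h}$, so by uniqueness $(rx)_{g}=rx_{g}$. Consequently $T_{g}(rx)=rx-rx_{g}=r(x-x_{g})=rT_{g}(x)$, which is exactly the use of Lemma \ref{3.2} that $R_{g}$ is a subspace.

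Then I would compute the kernel directly. If $T_{g}(x)=0$ then $x=x_{g}$, so $x\in R_{g}$; conversely, if $x\in R_{g}$ then $x_{g}=x$ and $T_{g}(x)=0$. Therefore $Ker(T_{g})=R_{g}$, as required. The main (and essentially only) obstacle is the scalar-compatibility check above, and it reduces to invoking $R_{e}R_{g}\subseteq R_{g}$ together with uniqueness of the homogeneous decomposition; everything else is bookkeeping. An equally acceptable alternative, should one prefer, is to take $T_{g}$ to be the canonical quotient map $R\rightarrow R/R_{g}$, which is $R_{e}$-linear with kernel $R_{g}$ precisely because $R_{g}$ is a subspace by Lemma \ref{3.2}.
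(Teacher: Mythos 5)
Your proof is correct, and its primary route differs from the paper's: the paper simply takes $T_{g}$ to be the canonical quotient map $T_{g}:R\rightarrow R/R_{g}$, $T_{g}(x)=x+R_{g}$ --- precisely the ``equally acceptable alternative'' you mention in your closing sentence --- relying on Lemma \ref{3.2} to know $R_{g}$ is a subspace, so that $R/R_{g}$ is a vector space over $R_{e}$ and the quotient map is linear with kernel $R_{g}$. Your main construction instead is an endomorphism of $R$ itself, $T_{g}(x)=x-x_{g}$, the projection onto the complement $\bigoplus_{h\neq g}R_{h}$; this buys an explicit map with no quotient space, at the price of the scalar-compatibility check, and the two constructions are of course linked by the isomorphism $R/R_{g}\approx\bigoplus_{h\neq g}R_{h}$. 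One small repair to your check: to conclude $(rx)_{g}=rx_{g}$ from uniqueness of the homogeneous decomposition you need $rx_{h}\in R_{h}$ for \emph{every} $h\in G$, i.e., the inclusions $R_{e}R_{h}\subseteq R_{eh}=R_{h}$ for all $h$, not only the single inclusion $R_{e}R_{g}\subseteq R_{g}$ that you cite; since these hold identically by the grading axiom, the argument goes through unchanged. The kernel computation is exactly right in both versions, and neither needs more from invertibility than that $R_{e}$ is a field, which is what makes ``linear transformation'' meaningful here.
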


\begin{proof} Let $g\in G$. Define $T_{g}:R\rightarrow R/R_{g}$ by $T_{g}(x)=x+R_{g}$. Then it is easy to prove that $T_{g}$ is well-defined linear transformation with $Ker(T_{g})=R_{g}$.
\end{proof}

\section{Weakly Crossed Products}

In this section, we study the relations between weakly crossed products and several properties of graded rings, and we introduce several results concerning weakly crossed products.

\begin{defn} Let $R$ be a $G$-graded ring. Then $(R,G)$ is said to be weakly crossed product if $R_{g}$ contains a unit for all $g\in supp(R,G)$.
\end{defn}

Clearly, if $(R,G)$ is a crossed product, then $(R,G)$ is a weakly crossed product. However, the converse is not true in general; if $|G|\geq2$, then the trivial graduation of $R$ ($R_{e}=R$ and $R_{g}=0$ otherwise) is a weakly crossed product but it is not a crossed product since for $g\neq e$, $R_{g}$ does not contain units.

\begin{exa} In Example \ref{3}, $(R,G)$ is a weakly crossed product since $\left(
                                                                          \begin{array}{cc}
                                                                            1 & 0 \\
                                                                            0 & 1 \\
                                                                          \end{array}
                                                                        \right)\in R_{0}$ and $\left(
                                                                                                 \begin{array}{cc}
                                                                                                   0 & 1 \\
                                                                                                   1 & 0 \\
                                                                                                 \end{array}
                                                                                               \right)\in R_{2}$ are units. However, $(R,G)$ is not a crossed product since $G\neq supp(R,G)$.
\end{exa}

\begin{exa} In Example \ref{5}, $(R,G)$ is not a weakly crossed product since $R_{1}$ does not contain units.
\end{exa}

\begin{prop} If $(R,G)$ is a weakly crossed product, then $(R,G)$ is first strong.\end{prop}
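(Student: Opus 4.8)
The plan is to go directly to the definition of first strong, which asks that $1\in R_{g}R_{g^{-1}}$ for every $g\in supp(R,G)$, and to produce such a factorization of $1$ from the unit guaranteed in each homogeneous component over the support. First I would fix an arbitrary $g\in supp(R,G)$. By the definition of a weakly crossed product, $R_{g}$ contains a unit, say $u\in R_{g}$, so that $u$ is invertible in $R$.

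The crucial step is to locate the inverse $u^{-1}$ in the correct homogeneous component. This is exactly what Lemma \ref{2.4} supplies: since $u\in R_{g}$ is a unit, we get $u^{-1}\in R_{g^{-1}}$. With the two factors placed in $R_{g}$ and $R_{g^{-1}}$ respectively, I would then observe that $1=uu^{-1}\in R_{g}R_{g^{-1}}$, which is precisely the condition $1\in R_{g}R_{g^{-1}}$ required of a first strong ring at the index $g$.

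Since $g\in supp(R,G)$ was arbitrary, this establishes $1\in R_{g}R_{g^{-1}}$ for all $g\in supp(R,G)$, and hence $(R,G)$ is first strong. There is essentially no obstacle here beyond invoking Lemma \ref{2.4} correctly: the entire content is that the inverse of a homogeneous unit is again homogeneous, of the inverse degree, so that the trivial identity $uu^{-1}=1$ already witnesses the product $R_{g}R_{g^{-1}}$ containing $1$. I would not need the subgroup characterization of $supp(R,G)$ or the equality $R_{g}R_{h}=R_{gh}$; the bare definition of first strong is reached immediately.
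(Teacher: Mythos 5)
Your proof is correct and matches the paper's argument exactly: both fix $g\in supp(R,G)$, take the unit $u\in R_{g}$ guaranteed by the weakly crossed product hypothesis, invoke Lemma \ref{2.4} to place $u^{-1}$ in $R_{g^{-1}}$, and conclude $1=uu^{-1}\in R_{g}R_{g^{-1}}$. Your closing remark that neither the subgroup property of the support nor the equality $R_{g}R_{h}=R_{gh}$ is needed is also consistent with the paper, which uses only the bare definition of first strong.
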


\begin{proof}Let $g\in $supp$(R,G)$. Since $(R,G)$ is weakly crossed product, $R_{g}$ contains a unit, say $r$ and then $1=rr^{-1}\in
R_{g}R_{g^{-1}}$ by Lemma \ref{2.4}. Hence, $(R,G)$ is first strong.\end{proof}

\begin{cor} If $(R,G)$ is a weakly crossed product, then $supp(R,G)$ is a subgroup of $G$.\end{cor}

Also, since every first strongly graded ring is second strongly graded, we can state the following.

\begin{cor} If $(R,G)$ is a weakly crossed product, then $(R,G)$ is second strong.\end{cor}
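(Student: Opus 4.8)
The plan is to chain together two implications that are already available. The immediately preceding Proposition establishes that if $(R,G)$ is a weakly crossed product, then $(R,G)$ is first strong; and the introductory material recalls from \cite{Refai} that every first strongly graded ring is second strong. Composing these two facts yields the claim directly, so the proof is a one-line deduction rather than a fresh argument.

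Concretely, I would first invoke the previous Proposition to conclude that $(R,G)$ is first strong under the weakly crossed product hypothesis. Then I would cite the established fact that first strong implies second strong to finish. For completeness one could instead argue intrinsically: a weakly crossed product has $supp(R,G)$ a subgroup of $G$ (by the preceding Corollary), and for each $g,h\in supp(R,G)$ one picks units $r\in R_g$ and $s\in R_h$, so that $rs\in R_gR_h$ is a unit lying in $R_{gh}$, forcing $R_gR_h=R_{gh}$; together with $supp(R,G)$ being a monoid this gives second strong. But the shorter route through the previous Proposition is preferable and self-contained.

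There is essentially no obstacle here: all of the substantive work was carried out in the prior Proposition (namely producing the unit $r\in R_g$ and applying Lemma \ref{2.4} to obtain $1=rr^{-1}\in R_gR_{g^{-1}}$). The present Corollary only records the already-noted observation that first strong rings are second strong, so I would keep the proof to a single sentence citing the Proposition and the known implication.
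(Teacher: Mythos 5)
Your proposal matches the paper exactly: the paper states this corollary without a separate proof, justifying it by the same one-line chain you give --- the preceding Proposition (weakly crossed product $\Rightarrow$ first strong) combined with the fact from \cite{Refai}, recalled in the introduction, that every first strongly graded ring is second strong. Your optional intrinsic argument via units $r\in R_{g}$, $s\in R_{h}$ is also sound, but the short route is precisely what the paper intends.
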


As every first strongly graded ring is non-degenerate, we have the following.

\begin{cor}\label{18} If $(R,G)$ is a weakly crossed product, then $(R,G)$ is non-degenerate.\end{cor}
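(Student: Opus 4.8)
The plan is to deduce non-degeneracy by chaining two implications that are already on the table. First I would invoke the immediately preceding Proposition, which asserts that a weakly crossed product is first strong. Then I would appeal to the fact recorded in the introduction and attributed to \cite{Refai}, namely that every first strongly graded ring is non-degenerate (indeed, $(R,G)$ is first strong if and only if it is second strong and non-degenerate, so being first strong already forces non-degeneracy). Combining these two gives the claim in a single line, which is precisely the reasoning foreshadowed by the sentence ``As every first strongly graded ring is non-degenerate'' that precedes the statement.

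For a self-contained alternative I would argue directly from the definition. Let $g\in G$ and $a_{g}\in R_{g}-\{0\}$; then $g\in supp(R,G)$, so since $(R,G)$ is a weakly crossed product there is a unit $r\in R_{g}$, and by Lemma \ref{2.4} its inverse lies in $R_{g^{-1}}$. The products $a_{g}r^{-1}$ and $r^{-1}a_{g}$ then lie in $R_{g}R_{g^{-1}}\subseteq R_{e}$ and in $R_{g^{-1}}R_{g}\subseteq R_{e}$ respectively. Since $r^{-1}$ is a unit and $a_{g}\neq 0$, both products are nonzero, exhibiting nonzero elements of $a_{g}R_{g^{-1}}$ and of $R_{g^{-1}}a_{g}$; hence $(R,G)$ is non-degenerate.

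There is essentially no real obstacle here: the corollary is a formal consequence of the preceding Proposition together with a previously cited implication. The only point requiring a moment's care, in the direct approach, is the claim that multiplying the nonzero element $a_{g}$ by the unit $r^{-1}$ yields something nonzero; this holds because a unit is never a zero divisor, so $a_{g}r^{-1}=0$ would force $a_{g}=0$. Given this, I would present the one-line chained argument as the official proof and, at most, relegate the explicit unit computation to a remark.
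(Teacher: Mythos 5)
Your primary argument is exactly the paper's: the corollary carries no written proof and is stated immediately after the remark that every first strongly graded ring is non-degenerate, so the intended justification is precisely the chain you give (weakly crossed product $\Rightarrow$ first strong by the preceding Proposition, and first strong $\Rightarrow$ non-degenerate by the fact from \cite{Refai} that first strong is equivalent to second strong plus non-degenerate). Your supplementary direct verification---producing a unit $r\in R_{g}$, placing $r^{-1}\in R_{g^{-1}}$ via Lemma \ref{2.4}, and noting that $a_{g}r^{-1}$ and $r^{-1}a_{g}$ are nonzero since units are not zero divisors---is also correct, but it is an optional extra rather than a different route.
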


\begin{exa}\label{1} Let $R=M_{2}(K)$ (the ring of all $2\times2$ matrices with entries from a field $K$) and $G=\mathbb{Z}$ (the group of integers). Then $R$ is $G$-graded by

 $R_{0}=\left(
          \begin{array}{cc}
            K & 0 \\
            0 & K \\
          \end{array}
        \right)$, $R_{1}=\left(
                           \begin{array}{cc}
                             0 & K \\
                             0 & 0 \\
                           \end{array}
                         \right)$, $R_{-1}=\left(
                                             \begin{array}{cc}
                                               0 & 0 \\
                                               K & 0 \\
                                             \end{array}
                                           \right)$ and $R_{j}=\{0\}$ otherwise. $(R,G)$ is non-degenerate but it is not weakly crossed product since $R_{1}$ does not contain units. So, the converse of Corollary \ref{18} is not true in general.
\end{exa}

\begin{prop}\label{21} If $(R, G)$ is a weakly crossed product, then $(R, G)$ is weak.
\end{prop}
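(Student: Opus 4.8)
The plan is to exploit the chain of implications already assembled in this section rather than to argue from scratch. The proposition immediately preceding this one shows that a weakly crossed product is first strong, and Corollary~\ref{12} shows that every first strong graded ring is weak. Composing these two facts yields the claim at once, and this is almost certainly the form in which I would record the proof. An equally short alternative routes through the support: the corollary stating that a weakly crossed product has $supp(R,G)$ a subgroup of $G$, combined with Proposition~\ref{8}, gives weakness directly.

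If I wanted a self-contained argument that does not invoke ``first strong,'' I would argue by contradiction using Lemma~\ref{2.4}. Suppose $g\in G$ with $R_{g}=\{0\}$, and assume toward a contradiction that $R_{g^{-1}}\neq\{0\}$. Then $g^{-1}\in supp(R,G)$, so the weakly crossed product hypothesis furnishes a unit $r\in R_{g^{-1}}$. Applying Lemma~\ref{2.4} to $r$, its inverse satisfies $r^{-1}\in R_{(g^{-1})^{-1}}=R_{g}$. But $r^{-1}\neq0$ since $r$ is a unit, so $R_{g}\neq\{0\}$, contradicting the assumption $R_{g}=\{0\}$. Hence $R_{g^{-1}}=\{0\}$, which is precisely the weakness condition.

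The only genuine input is Lemma~\ref{2.4}, which identifies the degree of the inverse of a homogeneous unit; everything else is bookkeeping with the support. I do not expect any real obstacle here. The definition of weakly crossed product supplies a unit exactly on the support, while the weakness condition need only be verified at indices $g$ with $R_{g}=\{0\}$; for such $g$ the implication ``$g^{-1}\in supp(R,G)\Rightarrow g\in supp(R,G)$'' is immediate once inverses of units are known to sit in the inverse degree. The statement is therefore essentially a corollary, and the main decision is simply which of the already-proved facts to cite.
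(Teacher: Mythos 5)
Your proposal is correct, and your self-contained argument by contradiction (take $g$ with $R_{g}=\{0\}$, assume $R_{g^{-1}}\neq\{0\}$, get a unit $r\in R_{g^{-1}}$, and use Lemma \ref{2.4} to place $0\neq r^{-1}\in R_{g}$) is exactly the proof the paper gives. Your shortcut via the preceding proposition and Corollary \ref{12} is also valid, so either route you cite would be fine.
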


\begin{proof} Let $g\in G$ such that $R_{g}=\{0\}$. If $R_{g^{-1}}\neq\{0\}$, then $g^{-1}\in supp(R, G)$, and then $R_{g^{-1}}$ contains a unit, say $r$. By Lemma \ref{2.4}, $0\neq r^{-1}\in R_{g}$ which is a contradiction. So, $R_{g^{-1}}=\{0\}$, and hence $(R, G)$ is weak.
\end{proof}

\begin{exa}\label{2} Let $R=M_{3}(K)$ and $G=\mathbb{Z}_{2}$ (the group of integers modulo $2$). Then $R$ is $G$-graded by

 $R_{0}=\left(
          \begin{array}{ccc}
            K & 0 & K \\
            0 & K & 0 \\
            K & 0 & K \\
          \end{array}
        \right)$ and $R_{1}=\left(
                              \begin{array}{ccc}
                                0 & K & 0 \\
                                K & 0 & K \\
                                0 & K & 0 \\
                              \end{array}
                            \right)$. $(R, G)$ is weak but not weakly crossed product since $R_{1}$ does not contain units. So, the converse of Proposition \ref{21} is not true in general.
\end{exa}

\begin{prop}\label{19} Let $R$ be a weakly $G$-graded domain. If $(R, G)$ is invertible, then $(R,G)$ is a weakly crossed product.\end{prop}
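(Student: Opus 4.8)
The plan is to derive this directly from Lemma \ref{2.5}, which already does the essential work. Recall that Lemma \ref{2.5} asserts that under precisely the present hypotheses---$R$ a weakly $G$-graded domain with $(R,G)$ invertible---\emph{every} nonzero homogeneous element of $R$ is a unit. Since the definition of a weakly crossed product only demands that each component $R_g$ with $g\in supp(R,G)$ contains a unit, the statement should follow almost immediately once Lemma \ref{2.5} is invoked.

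Concretely, first I would fix an arbitrary $g\in supp(R,G)$. By the very definition of the support, this means $R_g\neq\{0\}$, so I may select some nonzero homogeneous element $r\in R_g$. Next, because $R$ is a weakly $G$-graded domain and $(R,G)$ is invertible, the hypotheses of Lemma \ref{2.5} are met, and that lemma lets me conclude that $r$ is a unit. Hence $R_g$ contains a unit.

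Finally, since $g$ was an arbitrary element of $supp(R,G)$, every such component contains a unit, which is exactly the assertion that $(R,G)$ is a weakly crossed product. I would close the argument there.

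There is no real obstacle here: the substance is packaged entirely inside Lemma \ref{2.5}, and the remaining content is just unwinding the definition of the support and the definition of a weakly crossed product. The only point worth stating cleanly is the passage from ``$g\in supp(R,G)$'' to ``$R_g$ contains a nonzero (hence, by the lemma, invertible) element,'' so that the quantifier over all $g\in supp(R,G)$ is handled correctly.
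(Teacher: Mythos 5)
Your proof is correct and matches the paper's own argument exactly: the paper's proof consists of the single line ``Apply Lemma \ref{2.5},'' and your write-up is precisely the unwinding of that lemma together with the definitions of support and weakly crossed product. Nothing further is needed.
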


\begin{proof} Apply Lemma \ref{2.5}.
\end{proof}

\begin{rem} In Example \ref{2.2}, $(R, G)$ is weakly crossed product by Proposition \ref{19}.
\end{rem}

\begin{rem} The converse of Proposition \ref{19} is not true in general; if $R$ is not a field, then the trivial graduation of $R$  is a weakly crossed product but not invertible.
\end{rem}

\begin{prop} \label{20} If $(R,G)$ is a weakly crossed product, then for all $g\in supp(R,G)$, there exists a unit $r\in R_{g}$ such that $R_{g}=R_{e}r$ (i.e., $R_{g}$ is a cyclic $R_{e}$-module).\end{prop}

\begin{proof}Let $g\in supp(R,G)$. Since $(R,G)$ is a weakly crossed product, $R_{g}$ contains a unit, say $r$. Let $x\in R_{g}$. Then
$x=x.1=x.r^{-1}r\in R_{g}R_{g^{-1}}r\subseteq R_{e}r$. On the other hand, $R_{e}r\subseteq R_{e}R_{g}\subseteq R_{g}$. Hence $R_{g}=R_{e}r$.\end{proof}

\begin{rem} Similarly, one can prove that if $(R,G)$ is a weakly crossed product, then for all $g\in supp(R,G)$, there exists a unit $r\in R_{g}$ such that $R_{g}=rR_{e}$.
\end{rem}

\begin{cor} If $(R,G)$ is a weakly crossed product, then $R_{g}$ is isomorphic to $R_{e}$ as an $R_{e}$ - module for all $g\in supp(R,G)$.\end{cor}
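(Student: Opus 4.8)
The plan is to read off the isomorphism directly from Proposition \ref{20}, which already supplies, for each $g\in supp(R,G)$, a unit $r\in R_{g}$ with $R_{g}=R_{e}r$. Fixing such a $g$ and such a unit $r$, I would define the candidate map
\[
\phi:R_{e}\longrightarrow R_{g},\qquad \phi(a)=ar,
\]
and argue that $\phi$ is the desired $R_{e}$-module isomorphism. Here $R_{e}$ is viewed as a left $R_{e}$-module over itself and $R_{g}$ as a left $R_{e}$-module via $R_{e}R_{g}\subseteq R_{g}$.

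First I would verify that $\phi$ is well-defined and $R_{e}$-linear: for $a\in R_{e}$ we have $ar\in R_{e}R_{g}\subseteq R_{g}$, additivity is immediate, and for $b\in R_{e}$ one gets $\phi(ba)=(ba)r=b(ar)=b\phi(a)$ by associativity, so $\phi$ respects the left $R_{e}$-action. Surjectivity is essentially a restatement of Proposition \ref{20}: since $R_{g}=R_{e}r$, every $x\in R_{g}$ has the form $x=ar=\phi(a)$ for some $a\in R_{e}$. For injectivity I would use that $r$ is a unit: if $\phi(a)=ar=0$, then multiplying on the right by $r^{-1}$ gives $a=(ar)r^{-1}=0$, so $\ker\phi=\{0\}$. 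Combining these three points yields that $\phi$ is an $R_{e}$-module isomorphism, hence $R_{g}\cong R_{e}$.

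Alternatively, one could exhibit the inverse map explicitly as $\psi:R_{g}\to R_{e}$, $\psi(x)=xr^{-1}$, noting that by Lemma \ref{2.4} we have $r^{-1}\in R_{g^{-1}}$, so $xr^{-1}\in R_{g}R_{g^{-1}}\subseteq R_{e}$; then $\psi$ lands in $R_{e}$ and is a two-sided inverse of $\phi$. This variant makes the role of Lemma \ref{2.4} transparent.

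There is no serious obstacle here, as the structural content is entirely contained in Proposition \ref{20}; the only point requiring care is bookkeeping of the module structure—ensuring we consistently regard both $R_{e}$ and $R_{g}$ as left $R_{e}$-modules so that multiplying the generator $r$ on the right produces an $R_{e}$-linear (rather than merely additive) map, and that injectivity genuinely uses the invertibility of $r$ rather than just $r\neq 0$. I would keep the write-up short and let Proposition \ref{20} do the work.
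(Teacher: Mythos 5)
Your proposal is correct and follows exactly the paper's own argument: the paper likewise invokes Proposition \ref{20} to obtain a unit $r\in R_{g}$ with $R_{g}=R_{e}r$ and then declares $f:R_{e}\rightarrow R_{g}$, $f(x)=xr$, to be an $R_{e}$-isomorphism. Your write-up merely supplies the routine verifications (linearity, surjectivity from $R_{g}=R_{e}r$, injectivity from invertibility of $r$) that the paper leaves implicit.
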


\begin{proof} Let $g\in suup(R, G)$. Then by Proposition \ref{20}, $R_{g}=R_{e}r$ for some unit $r\in R_{g}$, and then $f:R_{e}\rightarrow R_{g}$ such that $f(x)=xr$ is an $R_{e}$-isomorphism.
\end{proof}

\begin{prop}Let $R$ be a commutative $G$ - graded ring and $M$ be a $G$-graded $R$-module. If $(R,G)$ is a weakly crossed product, then $M_{g}$ is isomorphic to $M_{e}$ as an $R_{e}$ - module for all $g\in $supp$(R,G)$.\end{prop}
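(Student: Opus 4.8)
The plan is to mimic the proof of Proposition \ref{20} and the corollary preceding it, constructing the isomorphism explicitly from a homogeneous unit. First I would fix $g\in\mathrm{supp}(R,G)$ and invoke Proposition \ref{20} to produce a unit $r\in R_{g}$; by Lemma \ref{2.4} its inverse satisfies $r^{-1}\in R_{g^{-1}}$. These two homogeneous elements serve as the degree-shifting operators between the $e$-component and the $g$-component of $M$.

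Next I would define $f\colon M_{e}\rightarrow M_{g}$ by $f(m)=rm$. This is well defined with the stated codomain because $rm\in R_{g}M_{e}\subseteq M_{ge}=M_{g}$. Additivity of $f$ is immediate, and for $a\in R_{e}$ one has $f(am)=r(am)=(ra)m=(ar)m=a(rm)=af(m)$, where the middle equality $ra=ar$ is precisely where the commutativity hypothesis on $R$ is used; thus $f$ is $R_{e}$-linear.

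It then remains to check that $f$ is bijective, and here the inverse is supplied by multiplication by $r^{-1}$. For injectivity, if $rm=0$ then $m=1\cdot m=(r^{-1}r)m=r^{-1}(rm)=0$. For surjectivity, given $n\in M_{g}$ put $m=r^{-1}n$; then $m\in R_{g^{-1}}M_{g}\subseteq M_{g^{-1}g}=M_{e}$, so $m$ lies in the domain, and $f(m)=r(r^{-1}n)=(rr^{-1})n=n$. Hence $f$ is an $R_{e}$-module isomorphism and $M_{g}\cong M_{e}$.

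I do not expect any serious obstacle; the only points requiring care are the bookkeeping of graded degrees, ensuring that $f$ and its inverse land in the correct homogeneous components, and the explicit appeal to commutativity needed to slide the scalar $a\in R_{e}$ past the unit $r$. If $R$ were not assumed commutative, the map $m\mapsto rm$ would in general be only a homomorphism of abelian groups rather than of $R_{e}$-modules, so the commutativity assumption is essential to the statement as phrased.
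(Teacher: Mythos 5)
Your proof is correct, and it reaches the paper's conclusion by a more self-contained route. Both arguments build the same map $f\colon M_{e}\rightarrow M_{g}$, $f(m)=rm$, from a homogeneous unit $r\in R_{g}$ (with $r^{-1}\in R_{g^{-1}}$ by Lemma \ref{2.4}), and both use commutativity exactly where you say, to slide scalars from $R_{e}$ past $r$. The difference is in how bijectivity is obtained. The paper first notes that a weakly crossed product is first strong, then invokes the external result of Refai and Moh'd that over a first strongly graded ring every graded module is first strong, to conclude $M_{g}=R_{g}M_{e}=R_{e}rM_{e}=rR_{e}M_{e}=rM_{e}$; surjectivity of $f$ is then immediate and injectivity is left implicit. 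You instead verify everything directly: injectivity because $rm=0$ forces $m=(r^{-1}r)m=0$, and surjectivity because $n\in M_{g}$ gives $r^{-1}n\in R_{g^{-1}}M_{g}\subseteq M_{e}$ with $f(r^{-1}n)=n$. In particular your surjectivity computation proves $M_{g}=rM_{e}$ without the first-strong machinery, so your argument shows the citation to first strongly graded modules is dispensable here: the invertibility of a single homogeneous element of degree $g$ suffices. What the paper's route buys is brevity and a conceptual placement within the first-strong framework developed earlier in the section; what yours buys is an elementary, fully checked proof whose only inputs are Lemma \ref{2.4}, the module axioms, and the grading containments $R_{g}M_{e}\subseteq M_{g}$ and $R_{g^{-1}}M_{g}\subseteq M_{e}$. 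Your closing remark is also apt as it concerns this particular map: without commutativity, $m\mapsto rm$ is only additive, which is precisely why the hypothesis appears in the statement (and is used in the paper's step $R_{e}rM_{e}=rR_{e}M_{e}$), though strictly speaking this shows the hypothesis is essential to the proof strategy rather than constituting a counterexample to any noncommutative analogue.
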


\begin{proof} Let $g\in supp(R,G)$. Since $(R,G)$ is a weakly crossed product, we have $R_{g}=R_{e}r$ for some unit $r\in R_{g}$. Since $(R,G)$ is
first strong, $(M,G)$ is first strong. So, $M_{g}=R_{g}M_{e}=R_{e}rM_{e}=rR_{e}M_{e}=rM_{e}$, and then $f:M_{e}\rightarrow M_{g}$ such that $f(x)=rx$ is an $R_{e}$-isomorphism.\end{proof}

\begin{prop}\label{22} Let $(R,G)$ be a weakly crossed product, $M$ a $G$-graded $R$-module and $f:M\rightarrow R$ an $R$-homomorphism. Then $f(M_{e})=R_{e}$ if and only if $f(M_{g})=R_{g}$ for all $g\in supp(R,G)$.\end{prop}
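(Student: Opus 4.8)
The plan is to handle the two implications separately, with the backward direction essentially immediate and the forward direction carrying all the content.

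First I would dispose of the backward implication. Since $1\in R_{e}$ and $1\neq 0$, we have $R_{e}\neq\{0\}$, so $e\in supp(R,G)$. Thus if $f(M_{g})=R_{g}$ holds for every $g\in supp(R,G)$, then specializing to $g=e$ gives $f(M_{e})=R_{e}$ at once.

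For the forward implication, assume $f(M_{e})=R_{e}$ and fix $g\in supp(R,G)$. The preliminary facts I would invoke are the following: a weakly crossed product is first strong, hence by the equivalence recalled in the introduction every graded $R$-module, in particular $M$, is first strongly graded; and, since $R_{g}$ contains a unit $r$, Proposition \ref{20} together with its following remark gives $R_{g}=rR_{e}$. I would then compute $M_{g}$ explicitly: first strongness of $(M,G)$ with second index $e$ gives $M_{g}=R_{g}M_{e}$, and substituting $R_{g}=rR_{e}$ and $R_{e}M_{e}=M_{e}$ (valid because $1\in R_{e}$) yields $M_{g}=rM_{e}$. Applying $f$ and using that it is $R$-linear, $f(M_{g})=f(rM_{e})=rf(M_{e})=rR_{e}=R_{g}$, where the last two equalities use the hypothesis $f(M_{e})=R_{e}$ and the remark $R_{g}=rR_{e}$. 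This completes the argument.

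The single point that deserves care, and the one I expect to be the real obstacle, is that $f$ is only assumed to be an $R$-homomorphism and \emph{not} a graded one, so one cannot argue directly that $f$ maps $M_{g}$ into $R_{g}$. The device that circumvents this is to write $M_{g}=rM_{e}$ for a single unit $r\in R_{g}$: this recasts the graded claim as a plain $R$-linearity computation, after which the unit $r$ can simply be pulled through $f$.
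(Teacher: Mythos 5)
Your proof is correct and follows essentially the same route as the paper's: for the forward direction both use that a weakly crossed product is first strong (hence $M_{g}=R_{g}M_{e}$) together with $R_{g}=rR_{e}$ for a unit $r\in R_{g}$, yielding $f(M_{g})=f(rM_{e})=rf(M_{e})=rR_{e}=R_{g}$, and the converse is immediate since $e\in supp(R,G)$. Your closing observation about why the unit device is needed (since $f$ is not assumed graded) is a nice explicit articulation of what the paper's computation implicitly relies on.
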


\begin{proof} Suppose that $f(M_{e})=R_{e}$. Let $g\in supp(R,G)$. Since $(R,G)$ is a weakly crossed product, $R_{g}=rR_{e}$ for some unit $r\in R_{g}$ and $(R,G)$ is first strong, and hence $(M,G)$ is first strong. So, $f(M_{g})=f(R_{g}M_{e})=f(rR_{e}M_{e})=f(rM_{e})=rf(M_{e})=rR_{e}=R_{g}$.
The converse is obvious.\end{proof}

Graded fixing maps have been introduced and studied in \cite{Refai Dawwas}; an $R$-homomorphism $f:M\rightarrow R$ is said to be a grade fixing map if $f(M_{g})\subseteq R_{g}$ for all $g\in G$. It has been proved that if $f$ is a surjective grade fixing map, then $f(M_{g})=R_{g}$ for all $g\in G$. Also, if $f$ is an injective grade fixing map, then $supp(M,G)\subseteq supp(R,G)$.

\begin{prop} Let $(R,G)$ be a weakly crossed product, $M$ a $G$-graded $R$-module and $f:M\rightarrow R$ an $R$-isomorphism. Then $f$ is a grade fixing map if and only if $f(M_{e})=R_{e}$.
\end{prop}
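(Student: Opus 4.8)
The plan is to prove the two implications separately, using Proposition \ref{22} together with the two facts about grade fixing maps recalled just before the statement.

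For the forward implication, suppose $f$ is a grade fixing map. Since $f$ is an isomorphism it is in particular a surjective grade fixing map, so by the recalled result $f(M_g)=R_g$ for all $g\in G$; taking $g=e$ gives $f(M_e)=R_e$. This direction is immediate.

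For the converse, suppose $f(M_e)=R_e$. First, since $f$ is an $R$-homomorphism, Proposition \ref{22} gives $f(M_g)=R_g$, hence $f(M_g)\subseteq R_g$, for every $g\in supp(R,G)$. It then remains to handle $g\notin supp(R,G)$, where $R_g=\{0\}$; for these I must show $f(M_g)=\{0\}$, equivalently (as $f$ is injective) $M_g=\{0\}$. The key observation is that the free generator $m_{0}:=f^{-1}(1)$ lies in $M_{e}$: indeed $1\in R_{e}=f(M_{e})$, so some element of $M_{e}$ maps to $1$, and injectivity of $f$ forces this element to be $m_{0}$. Consequently $M=f^{-1}(R)=f^{-1}(R\cdot 1)=R\,m_{0}$, and since $m_{0}\in M_{e}$ we have $R_{g}m_{0}\subseteq R_{g}M_{e}\subseteq M_{g}$ for each $g$. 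Comparing the decomposition $M=\sum_{g}R_{g}m_{0}$ with the direct sum $M=\bigoplus_{g}M_{g}$ yields $M_{g}=R_{g}m_{0}$ for every $g$. Hence $R_{g}=\{0\}$ forces $M_{g}=\{0\}$, so $f(M_{g})=\{0\}=R_{g}$. Combining the two cases gives $f(M_{g})\subseteq R_{g}$ for all $g\in G$, i.e. $f$ is a grade fixing map.

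The routine steps are the two inclusions and the uniqueness-of-decomposition bookkeeping. The only genuinely delicate point is the converse for $g\notin supp(R,G)$: one must rule out that $M$ has nonzero homogeneous components in degrees where $R$ vanishes, since Proposition \ref{22} only controls the components indexed by $supp(R,G)$. The plan resolves this by locating the generator $m_{0}$ in the identity component $M_{e}$, which propagates the vanishing of $R_{g}$ to $M_{g}$ through the identity $M_{g}=R_{g}m_{0}$.
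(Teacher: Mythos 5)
Your proof is correct, and your converse takes a genuinely different route from the paper's. The forward direction is identical in both: $f$ is a surjective grade fixing map, so $f(M_{g})=R_{g}$ for all $g$, and $g=e$ gives the claim. For the converse, the paper also starts from Proposition \ref{22} to get $f(M_{g})=R_{g}$ on $supp(R,G)$, but then handles $g\notin supp(R,G)$ by asserting $supp(M,G)\subseteq supp(R,G)$ ``since $f$ is injective'' --- on its face an appeal to the recalled fact about injective \emph{grade fixing} maps, which would be circular here since grade fixing is precisely the conclusion; the step can be repaired by observing that $f$ carries the graded submodule $\bigoplus_{g\in supp(R,G)}M_{g}$ onto $\sum_{g\in supp(R,G)}R_{g}=R=f(M)$, so injectivity forces that submodule to equal $M$. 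You sidestep this issue entirely: locating $m_{0}=f^{-1}(1)$ in $M_{e}$ and deducing $M_{g}=R_{g}m_{0}$ for every $g$ makes the vanishing of $M_{g}$ off the support immediate, and the uniqueness-of-decomposition bookkeeping you use for that identity is sound. Note, moreover, that your identity already gives $f(M_{g})=R_{g}f(m_{0})=R_{g}$ for \emph{all} $g\in G$, so your appeal to Proposition \ref{22} is redundant --- and with it the weakly crossed product hypothesis: your argument establishes the converse over an arbitrary $G$-graded ring, whereas the paper's route is tied to the crossed-product machinery behind Proposition \ref{22}. What the paper's approach buys is brevity and reuse of its stated results; what yours buys is a self-contained argument immune to the circularity and strictly more general.
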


\begin{proof} Suppose that $f$ is a grade fixing map. Since $f$ is surjective, $f(M_{e})=R_{e}$. Conversely, by Proposition \ref{22}, $f(M_{g})=R_{g}$ for all $g\in supp(R,G)$. Since $f$ is injective, $supp(M,G)\subseteq supp(R,G)$. If $g\notin supp(R,G)$, then $g\notin supp(M,G)$ and then $f(M_{g})=f(\{0\})=\{0\}=R_{g}$. Hence, $f$ is a grade fixing map.
\end{proof}

Graded Noetherian modules have been introduced and studied in \cite{Nastasescue}; a graded $R$-module $M$ is said to be graded Noetherian if every descending chain of graded $R$-submodules of $M$ terminates. Clearly, every Noetherian Module is graded Noetherian. However, the converse is not true in general (see \cite{Nastasescue})

\begin{prop} Let $R$ be a $G$-graded domain such that $(R,G)$ is a weakly crossed product, $(R, G)$ is invertible and $supp(R,G)$ is finite. Let $M$ be a $G$-graded $R$-module. Suppose that there exists a bijective grade fixing map from $M$ to $R$. If $M_{e}$ is $R_{e}$-simple, then $M$ is graded Noetherian.
\end{prop}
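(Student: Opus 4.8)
The plan is to reduce the statement to the assertion that $M$ has only finitely many graded $R$-submodules; once this is in hand, every descending chain of graded $R$-submodules has only finitely many distinct terms and must therefore stabilize, which is exactly graded Noetherianity in the sense defined above.

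First I would record the structural consequences of the hypotheses. Since $(R,G)$ is a weakly crossed product it is first strong, and hence the graded module $(M,G)$ is first strongly graded; in particular $M_{g}=R_{g}M_{e}$ for every $g\in supp(R,G)$. Because the bijective grade fixing map $f\colon M\rightarrow R$ is injective, the cited property of injective grade fixing maps gives $supp(M,G)\subseteq supp(R,G)$, so $M_{g}=\{0\}$ whenever $g\notin supp(R,G)$ and $M=\bigoplus_{g\in supp(R,G)}M_{g}$ is a sum of finitely many homogeneous components.

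The key step is to show that each nonzero component $M_{g}$ is a simple $R_{e}$-module. For this I would use that $f$ is a surjective grade fixing map, so $f(M_{g})=R_{g}$, together with injectivity of $f$, so that $f$ restricts to an $R_{e}$-module isomorphism $M_{g}\cong R_{g}$. Since $(R,G)$ is a weakly crossed product, $R_{g}\cong R_{e}$ as $R_{e}$-modules for every $g\in supp(R,G)$, and since $(R,G)$ is invertible, $R_{e}$ is a field and hence a simple $R_{e}$-module. Chaining the isomorphisms yields $M_{g}\cong R_{e}$, so $M_{g}$ is a simple $R_{e}$-module; this is consistent with, and extends to all components, the hypothesis that $M_{e}$ is $R_{e}$-simple. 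Alternatively one may propagate simplicity directly from $M_{e}$: writing $R_{g}=R_{e}r$ for a unit $r\in R_{g}$ gives a surjective $R_{e}$-map $M_{e}\rightarrow M_{g}$ whose kernel, being an $R_{e}$-submodule of the simple module $M_{e}$, is $\{0\}$ or $M_{e}$, forcing $M_{g}\cong M_{e}$.

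Finally I would count the graded submodules. If $N$ is a graded $R$-submodule of $M$, then $N=\bigoplus_{g\in G}(N\cap M_{g})$ with each $N\cap M_{g}$ an $R_{e}$-submodule of $M_{g}$; by the previous step $N\cap M_{g}$ is either $\{0\}$ or $M_{g}$. As $supp(M,G)$ is finite, there are at most $2^{|supp(M,G)|}$ possibilities, so $M$ has only finitely many graded $R$-submodules, and every descending chain of them stabilizes. I expect the main obstacle to be the key step, namely verifying that each graded component is $R_{e}$-simple: this is precisely where invertibility (giving that $R_{e}$ is a field), the weakly crossed product property (giving $R_{g}\cong R_{e}$), and the bijectivity of the grade fixing map (giving $M_{g}\cong R_{g}$) must be combined. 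Once that is established, finiteness of the support makes the enumeration of graded submodules, and hence the descending chain condition, immediate.
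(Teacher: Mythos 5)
Your proposal is correct, but it takes a genuinely different route from the paper. The paper transfers the whole problem to $R$: it first notes (somewhat redundantly, since invertibility already makes $R_{e}$ a field) that $R_{e}$ is $R_{e}$-simple, then combines Proposition \ref{21} with Corollary \ref{2.7} to conclude that $R$ is Noetherian, hence graded Noetherian, and finally pushes a chain $N_{1}\subseteq N_{2}\subseteq\cdots$ of graded $R$-submodules of $M$ forward through the bijective grade fixing map $f$ to a chain of graded ideals $f(N_{1})\subseteq f(N_{2})\subseteq\cdots$ of $R$, pulling the stabilization back by injectivity. You instead work component-wise inside $M$: injectivity of $f$ gives $supp(M,G)\subseteq supp(R,G)$, which is finite; bijectivity plus the grade fixing property gives $R_{e}$-isomorphisms $M_{g}\cong R_{g}$; and the weakly crossed product and invertibility hypotheses make each $R_{g}$, $g\in supp(R,G)$, isomorphic to the field $R_{e}$, hence $R_{e}$-simple. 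Your counting conclusion is then stronger than the paper's: $M$ has at most $2^{|supp(M,G)|}$ graded submodules, so it satisfies both chain conditions (it is graded Artinian as well as graded Noetherian, and your argument is insensitive to the paper's ascending/descending phrasing of the definition). Your route also reveals that two hypotheses are dispensable along it: the domain hypothesis, which the paper uses only through Corollary \ref{2.7}, and the $R_{e}$-simplicity of $M_{e}$, which on your route is automatic from $M_{e}\cong R_{e}$. What the paper's route buys in exchange is the Noetherianity of $R$ itself as a byproduct, obtained from previously established machinery. Two minor caveats in your alternative propagation step only: the map $M_{e}\rightarrow M_{g}$, $x\mapsto rx$, is $R_{e}$-linear because $R$ is commutative (which is how this paper uses ``domain''), and when $M_{g}=\{0\}$ the kernel is all of $M_{e}$, so the conclusion $M_{g}\cong M_{e}$ should be restricted to $g\in supp(M,G)$; since your primary route via $M_{g}\cong R_{g}\cong R_{e}$ is complete without this step, neither caveat affects the proof.
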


\begin{proof} Let $f$ be a bijective grade fixing map from $M$ to $R$. Firstly, we prove that $R_{e}$ is $R_{e}$-simple. Let $I$ be a nonzero ideal of $R_{e}$. Then $f^{-1}(I)$ is a nonzero $R_{e}$-submodule of $M_{e}$ and $M_{e}$ is $R_{e}$-simple, $f^{-1}(I)=M_{e}$ and then $I=f(M_{e})=R_{e}$. By Proposition \ref{21} and Corollary \ref{2.7}, $R$ is Noetherian and hence $R$ is graded Noetherian. Let $N_{1}\subseteq N_{2}\subseteq.....$ be graded $R$-submodules of $M$. Then $f(N_{1})\subseteq f(N_{2})\subseteq.....$ are graded ideals of $R$ and since $R$ is graded Noetherian, there exists $m\in \mathbf{N}$ such that $f(N_{n})=f(N_{m})$ for all $n\geq m$ and then $N_{n}=N_{m}$ for all $n\geq m$. Hence, $M$ is graded Noetherian.
\end{proof}

\section{Graded Semi-essential Submodules}

In this section, we introduce and study the concepts of graded semi-essential submodules and graded semi-uniform modules.

\begin{defn} Let $M$ be a graded $R$-module and $K$ a nonzero graded $R$-submodule of $M$. Then $K$ is said to be a graded semi-essential $R$-submodule of $M$ if $K\bigcap P\neq\{0\}$ for every nonzero graded prime $R$-submodule $P$ of $M$. A nonzero graded ideal $I$ of a graded ring $R$ is said to be graded semi-essential if $I\bigcap B\neq\{0\}$ for every nonzero graded prime ideal $B$ of $R$.
\end{defn}

Clearly, every graded essential $R$-submodule is graded semi-essential. The converse is not true in general as we see in the following example.

\begin{exa}\label{Example 1.2e} Consider $R=\mathbb{Z}$ (the ring of integers), $G=\mathbb{Z}_{2}$ (the group of integers modulo $2$) and $M=\mathbb{Z}_{12}[i]=\left\{a+ib:a, b\in \mathbb{Z}_{12}, i^{2}=-1\right\}$ where $\mathbb{Z}_{12}$ is the ring of integers modulo $12$. Then $R$ is $G$-graded by $R_{0}=\mathbb{Z}$ and $R_{1}=\{0\}$, $M$ is $G$-graded by $M_{0}=\mathbb{Z}_{12}$ and $M_{1}=i\mathbb{Z}_{12}$. The graded $R$-submodule $N=\langle6\rangle$ of $M$ is graded semi-essential but not graded essential.
\end{exa}

The following proposition gives a necessary and sufficient condition for a graded $R$-submodule to be a graded semi-essential. The proof is clear and hence it is omitted.

\begin{prop}\label{Theorem 1.3e} Let $M$ be a graded $R$-module and $K$ a nonzero graded $R$-submodule of $M$. Then $K$ is a graded semi-essential $R$-submodule of $M$ if and only if for every graded prime $R$-submodule $P$ of $M$ there exist $r\in h(R)$ and $m\in h(M)$ such that $m\in P$ and $0\neq rm\in K$.
\end{prop}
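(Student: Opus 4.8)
The plan is to verify both implications directly from the definitions; the only structural facts needed are that the intersection of two graded $R$-submodules is again a graded $R$-submodule and that a graded prime submodule is in particular a graded $R$-submodule. These together let me replace a nonzero element of such an intersection by a nonzero homogeneous component, which is the one nontrivial move in the argument.

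First I would handle the forward implication. Assuming $K$ is graded semi-essential, I take an arbitrary nonzero graded prime $R$-submodule $P$ of $M$; by definition $K\bigcap P\neq\{0\}$. Since $K$ and $P$ are both graded, their intersection $K\bigcap P$ is a graded $R$-submodule, so any nonzero $x\in K\bigcap P$ written as $x=\sum_{g}x_{g}$ has all of its homogeneous components $x_{g}$ in $K\bigcap P$, and at least one of them, say $m=x_{g}$, is nonzero. Then $m\in h(M)$ with $m\in P$, and taking $r=1\in R_{e}\subseteq h(R)$ gives $0\neq m=rm\in K$, which is exactly the required pair $(r,m)$.

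For the converse I would assume the displayed condition and fix any nonzero graded prime $R$-submodule $P$. Choosing $r\in h(R)$ and $m\in h(M)$ with $m\in P$ and $0\neq rm\in K$, I note that $m\in P$ forces $rm\in P$, since $P$ is an $R$-submodule; hence $0\neq rm\in K\bigcap P$, so $K\bigcap P\neq\{0\}$. As $P$ was an arbitrary nonzero graded prime $R$-submodule, $K$ is graded semi-essential.

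There is no genuine obstacle here: the statement is essentially a reformulation of the definition of graded semi-essential, and the same argument applies verbatim to the ideal case. The only point requiring a moment's care is the passage from a nonzero element of $K\bigcap P$ to a nonzero \emph{homogeneous} element, which is legitimate precisely because $K\bigcap P$ is graded; this is why the paper is content to declare the proof clear and omit it.
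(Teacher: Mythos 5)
Your proof is correct and is precisely the argument the paper intends: the paper omits the proof as ``clear,'' and your two reductions --- using gradedness of $K\bigcap P$ to extract a nonzero homogeneous $m$ with $r=1\in R_{e}\subseteq h(R)$ for the forward direction, and $m\in P\Rightarrow rm\in P$ for the converse --- are the standard way to fill it in. One pedantic remark: you rightly read the quantifier as ranging over \emph{nonzero} graded prime submodules (matching the definition of graded semi-essential), since taken literally the stated condition would fail for any module in which $\{0\}$ happens to be a graded prime submodule; your reading is clearly the intended one.
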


Also, the proof of the following proposition is straightforward and hence it is omitted.

\begin{prop}\label{Theorem 1.4e} Let $M$ be a graded $R$-module and $K_{1}$, $K_{2}$ two graded $R$-submodules of $M$ such that $K_{1}$ is a graded $R$-submodule of $K_{2}$. If $K_{1}$ is a graded semi-essential $R$-submodule of $M$, then $K_{2}$ is a graded semi-essential $R$-submodule of $M$.
\end{prop}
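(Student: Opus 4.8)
The plan is to argue directly from the definition of graded semi-essential submodule, exploiting only the containment $K_{1}\subseteq K_{2}$. Recall that to show $K_{2}$ is graded semi-essential I must verify two things: first that $K_{2}$ is a nonzero graded $R$-submodule of $M$, and second that $K_{2}\bigcap P\neq\{0\}$ for every nonzero graded prime $R$-submodule $P$ of $M$.

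The first point is immediate: $K_{2}$ is a graded $R$-submodule by hypothesis, and since $K_{1}$ is graded semi-essential it is in particular nonzero, so $K_{2}\supseteq K_{1}\neq\{0\}$ forces $K_{2}\neq\{0\}$. For the second point I would fix an arbitrary nonzero graded prime $R$-submodule $P$ of $M$. The key observation is that the containment $K_{1}\subseteq K_{2}$ yields $K_{1}\bigcap P\subseteq K_{2}\bigcap P$. Since $K_{1}$ is graded semi-essential, we have $K_{1}\bigcap P\neq\{0\}$, and therefore $K_{2}\bigcap P\supseteq K_{1}\bigcap P\neq\{0\}$, giving $K_{2}\bigcap P\neq\{0\}$.

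As $P$ was an arbitrary nonzero graded prime $R$-submodule of $M$, this establishes exactly the defining condition for $K_{2}$ to be graded semi-essential. I do not anticipate any genuine obstacle here: the argument is purely set-theoretic monotonicity of intersections under inclusion, and no structural property of prime submodules beyond their existence is needed. This is presumably why the authors flagged the proof as straightforward and omitted it; the only subtlety worth recording explicitly is the verification that $K_{2}$ inherits nonzeroness from $K_{1}$, which some readers might otherwise overlook.
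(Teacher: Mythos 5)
Your proof is correct and is exactly the straightforward argument the paper had in mind when it omitted the proof: monotonicity of intersection under $K_{1}\subseteq K_{2}$ gives $\{0\}\neq K_{1}\bigcap P\subseteq K_{2}\bigcap P$ for every nonzero graded prime $R$-submodule $P$, and your explicit check that $K_{2}$ inherits nonzeroness from $K_{1}$ is a reasonable touch. No issues.
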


The next example shows that the converse of Proposition \ref{Theorem 1.4e} is not true in general.

\begin{exa}\label{Example 1.5e} Consider Example \ref{Example 1.2e}. Clearly, $K_{1}=\langle4\rangle$ and $K_{2}=\langle2\rangle$ are graded $R$-submodules of $M$ such that $K_{1}$ is a graded $R$-submodule of $K_{2}$. Since $P=\langle3\rangle$ is a graded prime $R$-submodule of $M$ satisfies $K_{1}\bigcap P=\{0\}$, $K_{1}$ is not a graded semi-essential $R$-submodule of $M$, but it is obvious that $K_{2}$ is a graded semi-essential $R$-submodule of $M$.
\end{exa}

As a result of Proposition \ref{Theorem 1.4e}, we can state the following corollary.

\begin{cor}\label{Corollary 1.6e} Let $M$ be a graded $R$-module and $K_{1}$, $K_{2}$ two graded $R$-submodules of $M$. If $K_{1}\bigcap K_{2}$ is a graded semi-essential $R$-submodule of $M$, then $K_{1}$ and $K_{2}$ are graded semi-essential $R$-submodules of $M$.
\end{cor}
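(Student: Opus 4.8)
The plan is to obtain this as a direct consequence of Proposition \ref{Theorem 1.4e}, applied once to $K_{1}$ and once to $K_{2}$. The underlying observation is that $K_{1}\bigcap K_{2}$ is a graded $R$-submodule sitting inside each of $K_{1}$ and $K_{2}$, so the monotonicity furnished by Proposition \ref{Theorem 1.4e} (a larger submodule inherits graded semi-essentiality from a smaller one contained in it) can be invoked with $K_{1}\bigcap K_{2}$ playing the role of the smaller submodule.

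Concretely, I would first record that $K_{1}\bigcap K_{2}$ is a graded $R$-submodule of $M$ by part (2) of the opening lemma, and that it is contained in both $K_{1}$ and $K_{2}$. Since a graded $R$-submodule of $M$ contained in $K_{i}$ is automatically a graded $R$-submodule of $K_{i}$ (the grading of $K_{i}$ being the componentwise one, $(K_{i})_{g}=K_{i}\bigcap M_{g}$), the hypothesis that $K_{1}\bigcap K_{2}$ is graded semi-essential in $M$, together with Proposition \ref{Theorem 1.4e} applied with $K_{1}\bigcap K_{2}$ as the small submodule and $K_{1}$ as the large one, yields that $K_{1}$ is graded semi-essential in $M$. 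Repeating the identical step with $K_{2}$ in place of $K_{1}$ gives that $K_{2}$ is graded semi-essential in $M$, and the corollary is proved.

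There is no substantive obstacle here: the statement is a formal corollary, and its entire content consists of the two applications of Proposition \ref{Theorem 1.4e}. The only point meriting a moment's attention is checking that $K_{1}\bigcap K_{2}$, known to be graded in $M$ and contained in $K_{i}$, genuinely qualifies as a graded $R$-submodule of $K_{i}$ so that Proposition \ref{Theorem 1.4e} is literally applicable; this is immediate from the definition of the inherited grading and requires no computation.
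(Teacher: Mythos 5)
Your proposal is correct and follows essentially the same route as the paper, which states this corollary as an immediate consequence of Proposition \ref{Theorem 1.4e} applied twice, with $K_{1}\bigcap K_{2}$ (graded by part (2) of the opening lemma) serving as the smaller submodule inside each of $K_{1}$ and $K_{2}$. Your extra verification that $K_{1}\bigcap K_{2}$ carries the inherited grading $K_{1}\bigcap K_{2}\bigcap M_{g}$ and hence literally satisfies the hypothesis of Proposition \ref{Theorem 1.4e} is sound and fills in the only detail the paper leaves implicit.
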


The next example shows that the converse of Corollary \ref{Corollary 1.6e} is not true in general.

\begin{exa}\label{Example 1.7e} Consider $R=\mathbb{Z}$, $G=\mathbb{Z}_{2}$ and $M=\mathbb{Z}_{36}[i]$. Then $R$ is $G$-graded by $R_{0}=\mathbb{Z}$ and $R_{1}=\{0\}$, $M$ is $G$-graded by $M_{0}=\mathbb{Z}_{36}$ and $M_{1}=i\mathbb{Z}_{36}$. Clearly, $K_{1}=\langle12\rangle$ and $K_{2}=\langle18\rangle$ are graded $R$-submodules of $M$. The only graded prime $R$-submodules of $M$ are $P_{1}=\langle2\rangle$ and $P_{2}=\langle3\rangle$. Since $K_{1}\bigcap P_{1}\neq\{0\}$ and $K_{1}\bigcap P_{2}\neq\{0\}$, $K_{1}$ is a graded semi-essential $R$-submodule of $M$. Similarly, $K_{2}$ is a graded semi-essential $R$-submodule of $M$. But $K_{1}\bigcap K_{2}=\{0\}$ which is not a graded semi-essential $R$-submodule of $M$.
\end{exa}

The next Proposition gives a condition under which the converse of Corollary \ref{Corollary 1.6e} is true.

\begin{prop}\label{Theorem 1.8e} Let $M$ be a graded $R$-module and $K_{1}$, $K_{2}$ two graded $R$-submodules of $M$. If $K_{1}$ is a graded essential $R$-submodule of $M$ and $K_{2}$ is a graded semi-essential $R$-submodule of $M$, then $K_{1}\bigcap K_{2}$ is a graded semi-essential $R$-submodule of $M$.
\end{prop}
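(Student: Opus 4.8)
The plan is to verify the graded semi-essential condition for $K_{1}\bigcap K_{2}$ directly against an arbitrary nonzero graded prime submodule, chaining the two hypotheses in the right order. First I would fix a nonzero graded prime $R$-submodule $P$ of $M$ and aim to show $(K_{1}\bigcap K_{2})\bigcap P\neq\{0\}$. Since $K_{2}$ is graded semi-essential and $P$ is a nonzero graded prime submodule, the hypothesis gives immediately that $K_{2}\bigcap P\neq\{0\}$.

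The key observation is that $K_{2}\bigcap P$ is itself a \emph{graded} $R$-submodule of $M$: both $K_{2}$ and $P$ are graded, and by part (2) of the first Lemma the intersection of two graded submodules is again a graded submodule. Thus $K_{2}\bigcap P$ is a nonzero graded $R$-submodule of $M$, which is precisely the kind of object to which the graded essentiality of $K_{1}$ applies. Invoking that $K_{1}$ is graded essential against $N=K_{2}\bigcap P$ yields $K_{1}\bigcap(K_{2}\bigcap P)\neq\{0\}$.

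It then remains only to rearrange the intersection: by associativity and commutativity of intersection, $K_{1}\bigcap(K_{2}\bigcap P)=(K_{1}\bigcap K_{2})\bigcap P$, so $(K_{1}\bigcap K_{2})\bigcap P\neq\{0\}$. Since $P$ was an arbitrary nonzero graded prime submodule, this establishes that $K_{1}\bigcap K_{2}$ satisfies the defining condition of a graded semi-essential submodule. I would also remark, to complete the check that $K_{1}\bigcap K_{2}$ is an admissible (nonzero graded) submodule in the first place, that $K_{1}\bigcap K_{2}$ is graded by the same Lemma, and it is nonzero because $K_{2}$ is a nonzero graded submodule and the essentiality of $K_{1}$ forces $K_{1}\bigcap K_{2}\neq\{0\}$.

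I do not expect a genuine obstacle here: the argument is a short two-step application of the two hypotheses, and the only point requiring care is the bookkeeping that every intersection appearing along the way ($K_{2}\bigcap P$ and $K_{1}\bigcap K_{2}$) is a graded submodule, so that the graded essential and graded semi-essential definitions are legitimately applicable. This is guaranteed by the closure of graded submodules under intersection.
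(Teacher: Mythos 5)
Your proof is correct and is exactly the routine argument the paper omits (its proof of this proposition reads only ``It is straightforward''): apply the semi-essentiality of $K_{2}$ to the given graded prime $P$, observe that $K_{2}\bigcap P$ is a nonzero \emph{graded} submodule so that the essentiality of $K_{1}$ applies, and reassociate the intersection. Your additional check that $K_{1}\bigcap K_{2}$ is itself nonzero and graded (via the closure of graded submodules under intersection and the essentiality of $K_{1}$) is a worthwhile detail that the paper leaves implicit.
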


\begin{proof} It is straightforward.
\end{proof}

Before we give another condition under which the converse of Corollary \ref{Corollary 1.6e} is true, we need the following lemma.

\begin{lem}\label{Lemma 1.9e} Let $M$ be a graded $R$-module, $K$ a graded $R$-submodule of $M$ and $P$ a graded prime $R$-submodule of $M$. If $(K\bigcap P:_{R}m)=Ann(M)$ for all $m\in h(M)-(K\bigcap P)$, then $K\bigcap P$ is a graded prime $R$-submodule of $M$.
\end{lem}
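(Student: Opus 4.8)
The plan is to verify directly that $K\bigcap P$ meets the definition of a graded prime $R$-submodule, i.e.\ that it is a proper graded $R$-submodule of $M$ and that the homogeneous primeness condition holds. First I would record that $K\bigcap P$ is a graded $R$-submodule of $M$, since the intersection of two graded $R$-submodules is again graded (this is the second part of the opening lemma of the paper), using that $P$, being graded prime, is in particular a graded $R$-submodule. Properness is immediate: because $P$ is graded prime it is proper, and $K\bigcap P\subseteq P\neq M$ forces $K\bigcap P\neq M$.

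The key observation, which I would isolate before the primeness test, is that $Ann(M)$ is always contained in $(K\bigcap P:_{R}M)$. Indeed, if $r\in Ann(M)$ then $rM=\{0\}\subseteq K\bigcap P$, so $r\in (K\bigcap P:_{R}M)$. This turns the hypothesis involving $Ann(M)$ into exactly the data needed to conclude $r\in (K\bigcap P:_{R}M)$, which is the second alternative in the definition of graded prime.

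To carry out the test, I would take $r\in h(R)$ and $m\in h(M)$ with $rm\in K\bigcap P$ and split into two cases. If $m\in K\bigcap P$ there is nothing to prove. Otherwise $m\in h(M)-(K\bigcap P)$, so the hypothesis gives $(K\bigcap P:_{R}m)=Ann(M)$; since $rm\in K\bigcap P$ says precisely that $r\in (K\bigcap P:_{R}m)$, we obtain $r\in Ann(M)\subseteq (K\bigcap P:_{R}M)$ by the observation above. In either case the graded prime condition is satisfied, and hence $K\bigcap P$ is a graded prime $R$-submodule of $M$.

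I do not anticipate a genuine obstacle here; the argument is essentially an unwinding of definitions. The only point that needs a moment of care is the realization that membership in $Ann(M)$ immediately yields membership in $(K\bigcap P:_{R}M)$ because $\{0\}\subseteq K\bigcap P$, after which the homogeneity of $r$ and $m$ lets the hypothesis be applied verbatim to the single residual case $m\notin K\bigcap P$.
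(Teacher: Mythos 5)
Your proof is correct and follows essentially the same route as the paper's: take homogeneous $r,m$ with $rm\in K\bigcap P$, dispose of the case $m\in K\bigcap P$, and otherwise use the hypothesis to get $r\in Ann(M)\subseteq (K\bigcap P:_{R}M)$. Your version is in fact slightly more careful than the paper's, since you also verify that $K\bigcap P$ is graded and proper, which the paper leaves implicit.
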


\begin{proof} Let $r\in h(R)$ and $m\in h(M)$ such that $rm\in K\bigcap P$. Suppose that $m\notin K\bigcap P$. Since $rm\in K\bigcap P$, $r\in (K\bigcap P:_{R}m)$ and then $r\in Ann(M)$ which implies that $r\in (K:_{R}M)\bigcap(P:_{R}M)$ and hence $r\in (K\bigcap P:_{R}M)$. Therefore, $K\bigcap P$ is a graded prime $R$-submodule of $M$.
\end{proof}

\begin{prop}\label{Theorem 1.10e}  Let $M$ be a graded $R$-module and $K_{1}$, $K_{2}$ two graded semi-essential $R$-submodules of $M$. If $(K_{1}\bigcap P:_{R}m)=Ann(M)$ for every graded prime $R$-submodule $P$ of $M$ and for every $m\in h(M)-(K_{1}\bigcap P)$, then $K_{1}\bigcap K_{2}$ is a graded semi-essential $R$-submodule of $M$.
\end{prop}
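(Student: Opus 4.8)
The plan is to verify the defining condition of graded semi-essentiality directly: for every nonzero graded prime $R$-submodule $P$ of $M$, show that $(K_{1}\bigcap K_{2})\bigcap P\neq\{0\}$. The central observation is that this intersection rewrites as $K_{2}\bigcap(K_{1}\bigcap P)$, so it suffices to recognize $K_{1}\bigcap P$ as a \emph{nonzero graded prime} $R$-submodule of $M$ and then feed it into the semi-essentiality of $K_{2}$.

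First I would fix a nonzero graded prime $R$-submodule $P$ of $M$. Since $K_{1}$ is graded semi-essential, $K_{1}\bigcap P\neq\{0\}$, which already disposes of the nonvanishing requirement. Next, the hypothesis of the proposition is precisely the premise of Lemma \ref{Lemma 1.9e} applied to $K=K_{1}$ and this $P$: indeed $(K_{1}\bigcap P:_{R}m)=Ann(M)$ for every $m\in h(M)-(K_{1}\bigcap P)$. Hence Lemma \ref{Lemma 1.9e} yields that $K_{1}\bigcap P$ is a graded prime $R$-submodule of $M$. Combining the two facts, $K_{1}\bigcap P$ is a nonzero graded prime $R$-submodule of $M$.

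Now I would invoke the semi-essentiality of $K_{2}$ with respect to the graded prime submodule $K_{1}\bigcap P$: this gives $K_{2}\bigcap(K_{1}\bigcap P)\neq\{0\}$, that is, $(K_{1}\bigcap K_{2})\bigcap P\neq\{0\}$. Since $P$ was an arbitrary nonzero graded prime $R$-submodule of $M$, the submodule $K_{1}\bigcap K_{2}$ meets every such $P$ nontrivially; in particular $K_{1}\bigcap K_{2}$ is itself nonzero, and it is a graded $R$-submodule by the intersection part of Lemma \ref{4}'s ambient lemma on graded submodules. Therefore $K_{1}\bigcap K_{2}$ is graded semi-essential.

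I do not expect a genuine obstacle here: the argument is essentially bookkeeping. The only point requiring care is to confirm that the proposition's hypothesis matches the premise of Lemma \ref{Lemma 1.9e} verbatim (so that $K_{1}\bigcap P$ is certified graded prime rather than merely graded), and to use the semi-essentiality of $K_{1}$ first, in order to guarantee $K_{1}\bigcap P\neq\{0\}$ \emph{before} one is entitled to apply the semi-essentiality of $K_{2}$ to the graded prime submodule $K_{1}\bigcap P$.
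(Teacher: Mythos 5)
Your proof is correct and follows essentially the same route as the paper's: apply Lemma \ref{Lemma 1.9e} to certify $K_{1}\bigcap P$ as graded prime, then rewrite $(K_{1}\bigcap K_{2})\bigcap P=K_{2}\bigcap(K_{1}\bigcap P)$ and invoke the semi-essentiality of $K_{2}$. In fact you make explicit a small step the paper leaves tacit, namely that the semi-essentiality of $K_{1}$ guarantees $K_{1}\bigcap P\neq\{0\}$ before $K_{2}$'s semi-essentiality may be applied.
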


\begin{proof} Let $P$ be a nonzero graded prime $R$-submodule of $M$. Then by Lemma \ref{Lemma 1.9e}, $K_{1}\bigcap P$ is a graded prime $R$-submodule of $M$ and then $(K_{1}\bigcap K_{2})\bigcap P=K_{2}\bigcap(K_{1}\bigcap P)\neq\{0\}$. Hence, $K_{1}\bigcap K_{2}$ is a graded semi-essential $R$-submodule of $M$.
\end{proof}

\begin{prop}\label{Theorem 1.12e} Let $M$ be a graded $R$-module, $K$ a nonzero graded $R$-submodule of $M$ and $T$ a nonzero graded prime $R$-submodule of $M$ such that $(K+T)/T$ is a graded semi-essential $R$-submodule of $M/T$. If $P$ is a graded prime $R$-submodule of $M$ such that $T\subseteq P$ and $K\bigcap P=\{0\}$, then $T=P$.
\end{prop}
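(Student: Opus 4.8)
The plan is to argue by contradiction, reducing everything to the quotient module $M/T$ and applying the graded semi-essentiality hypothesis to the submodule $P/T$. Let $\pi:M\rightarrow M/T$ denote the canonical graded $R$-epimorphism, whose kernel is $T$. First I would record the structural facts about $P/T$: since $T\subseteq P$ and both are graded, Lemma~\ref{4} guarantees that $P/T$ is a graded $R$-submodule of $M/T$, and applying Lemma~\ref{6}(1) to $\pi$ with the role of the intermediate submodule played by $T$ itself (so that $\mathrm{Ker}(\pi)=T\subseteq T$), the image $\pi(P)=P/T$ is a graded prime $R$-submodule of $M/T$. As graded prime submodules are by definition proper, $P/T$ is a proper graded prime submodule of $M/T$.

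Next I would set up the contradiction. Suppose $T\neq P$; since $T\subseteq P$ this forces $T\subsetneq P$, and hence $P/T$ is a \emph{nonzero} graded prime $R$-submodule of $M/T$. Because $(K+T)/T$ is assumed to be a graded semi-essential $R$-submodule of $M/T$, its intersection with any nonzero graded prime submodule of $M/T$ must be nonzero; in particular
\[
\big((K+T)/T\big)\cap\big(P/T\big)\neq\{0\}.
\]
Using the standard identity for submodules containing $T$ (here $K+T\supseteq T$ and $P\supseteq T$, with $K+T$ graded by Lemma 1.1), this intersection equals $\big((K+T)\cap P\big)/T$.

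The key computation, which I expect to be the crux of the argument, is to show that $(K+T)\cap P=T$, for this collapses the intersection above to the zero submodule. The inclusion $T\subseteq(K+T)\cap P$ is immediate from $T\subseteq K+T$ and $T\subseteq P$. For the reverse inclusion I would take $y\in(K+T)\cap P$ and write $y=k+t$ with $k\in K$ and $t\in T$; since $t\in T\subseteq P$ and $y\in P$, it follows that $k=y-t\in P$, whence $k\in K\cap P=\{0\}$, so $k=0$ and $y=t\in T$. This modular-law style argument is exactly where the hypothesis $K\cap P=\{0\}$ is used. Consequently $\big((K+T)\cap P\big)/T=T/T=\{0\}$, contradicting the nonvanishing of the intersection obtained from semi-essentiality. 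Therefore the assumption $T\neq P$ is untenable, and we conclude $T=P$. The only delicate points to verify carefully are that $P/T$ genuinely qualifies as a nonzero graded prime submodule (so that the semi-essentiality of $(K+T)/T$ applies to it) and the quotient-intersection identity; both are routine once the correspondence lemmas \ref{4} and \ref{6} are invoked.
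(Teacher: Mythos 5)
Your proposal is correct and follows essentially the same route as the paper's own proof: the crux in both arguments is the computation $(K+T)\cap P=T$ deduced from $K\cap P=\{0\}$, which collapses $\bigl((K+T)/T\bigr)\cap\bigl(P/T\bigr)$ to zero, after which semi-essentiality of $(K+T)/T$ against the graded prime submodule $P/T$ of $M/T$ forces $P=T$. The only differences are cosmetic: you phrase it as a contradiction starting from $T\subsetneq P$ (so that $P/T$ is nonzero), whereas the paper concludes directly that $P/T=\{0\}$, and you make explicit the routine verifications via Lemmas \ref{4} and \ref{6} that the paper leaves implicit.
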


\begin{proof} Let $P$ be a graded prime $R$-submodule of $M$ such that $T\subseteq P$ and $K\bigcap P=\{0\}$. Assume that $m\in (K+T)\bigcap P$. Then $m=k+t=p$ for some $k\in K$, $t\in T$ and $p\in P$ and then $k=p-t\in K\bigcap P=\{0\}$ which implies that $k=0$ and hence $m=t\in T$. So, $(K+T)\bigcap P=T$ which implies that $((K+T)/T)\bigcap(P/T))=\{0\}$. But $(K+T)/T$ is a graded semi-essential $R$-submodule of $M/T$ by assumption and $P/T$ is a graded prime $R$-submodule of $M/T$, so $P/T=\{0\}$ which implies that $P=T$.
\end{proof}

\begin{prop}\label{Theorem 1.13e} Let $f:M\rightarrow M^{\prime}$ be a graded $R$-epimorphism such that $Ker(f)\subseteq Grad_{M}(M)$. If $K$ is a graded semi-essential $R$-submodule of $M^{\prime}$, then $f^{-1}(K)$ is a graded semi-essential $R$-submodule of $M$.
\end{prop}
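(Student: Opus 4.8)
The plan is to verify the definition directly: I must show that $f^{-1}(K)$ is a nonzero graded $R$-submodule of $M$ and that $f^{-1}(K)\bigcap P\neq\{0\}$ for every nonzero graded prime $R$-submodule $P$ of $M$. That $f^{-1}(K)$ is graded follows from the general fact (recorded before Lemma \ref{6}) that the preimage of a graded submodule under a graded $R$-homomorphism is graded; it is nonzero because $K\neq\{0\}$ and $f$ is surjective, so any $0\neq k\in K$ has a preimage $x$ with $f(x)=k\neq0$, whence $0\neq x\in f^{-1}(K)$. So I would fix a nonzero graded prime $R$-submodule $P$ of $M$, and the whole argument reduces to producing a nonzero element of $f^{-1}(K)\bigcap P$.

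The key use of the hypothesis $Ker(f)\subseteq Grad_{M}(M)$ is to force $Ker(f)\subseteq P$: the containment into the graded radical is meant to guarantee that $Ker(f)$ lies inside every graded prime $R$-submodule of $M$, in particular inside $P$. Granting this, I would apply Lemma \ref{6}(1) with the submodule ``$K$'' of that lemma taken to be $Ker(f)$ itself (which is graded and trivially contains $Ker(f)$): since $P$ is a graded prime $R$-submodule of $M$ containing $Ker(f)$, the image $f(P)$ is a graded prime $R$-submodule of $M^{\prime}$. I expect this to be the main obstacle, since it is exactly where the radical hypothesis must be converted into the containment $Ker(f)\subseteq P$ and then fed into Lemma \ref{6}.

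It then remains to split into two cases according to whether $f(P)$ is zero. If $f(P)=\{0\}$, then $P\subseteq Ker(f)\subseteq f^{-1}(\{0\})\subseteq f^{-1}(K)$, so $f^{-1}(K)\bigcap P=P\neq\{0\}$ and we are done without even invoking the semi-essentiality of $K$. Otherwise $f(P)$ is a \emph{nonzero} graded prime $R$-submodule of $M^{\prime}$, so the hypothesis that $K$ is graded semi-essential in $M^{\prime}$ yields $K\bigcap f(P)\neq\{0\}$. Choosing $0\neq y\in K\bigcap f(P)$ and writing $y=f(x)$ with $x\in P$, I get $f(x)=y\in K$, hence $x\in f^{-1}(K)$, while $x\in P$ and $x\neq 0$ (because $f(x)=y\neq0$). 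Thus $0\neq x\in f^{-1}(K)\bigcap P$.

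Since $P$ was an arbitrary nonzero graded prime $R$-submodule of $M$, this shows $f^{-1}(K)\bigcap P\neq\{0\}$ in every case, so $f^{-1}(K)$ is graded semi-essential. The only delicate point is the reading of the hypothesis in terms of the graded radical; once $Ker(f)\subseteq P$ is secured, Lemma \ref{6} together with the definition of graded semi-essentiality makes the remaining steps routine.
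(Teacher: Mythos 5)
Your proof is correct and is essentially the paper's own argument, just presented directly rather than contrapositively: both proofs convert the hypothesis $Ker(f)\subseteq Grad_{M}(M)$ into $Ker(f)\subseteq P$ for the given graded prime $P$, invoke Lemma \ref{6}(1) to conclude $f(P)$ is a graded prime $R$-submodule of $M^{\prime}$, use semi-essentiality of $K$ when $f(P)\neq\{0\}$, and handle the degenerate case via $P\subseteq Ker(f)\subseteq f^{-1}(K)$ (the paper runs this as: $f^{-1}(K)\bigcap P=\{0\}$ forces $f(P)=\{0\}$ and then $P=\{0\}$). Your additional verifications that $f^{-1}(K)$ is graded and nonzero are a minor tightening the paper leaves implicit, and your reading of $Grad_{M}(M)$ as lying in every graded prime submodule is exactly the reading the paper itself uses.
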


\begin{proof} Let $P$ be a graded prime $R$-submodule of $M$ such that $f^{-1}(K)\bigcap P=\{0\}$. Since $Ker(f)\subseteq Grad_{M}(M)\subseteq P$, $f(P)$ is a graded prime $R$-submodule of $M^{\prime}$ with $K\bigcap f(P)=\{0\}$. Since $K$ is a graded semi-essential $R$-submodule of $M^{\prime}$, $f(P)=\{0\}$ and then $P\subseteq f^{-1}(\{0\})=Ker(f)\subseteq f^{-1}(K)$ and hence $f^{-1}(K)\bigcap P=P$ which implies that $P=\{0\}$. Therefore, $f^{-1}(K)$ is a graded semi-essential $R$-submodule of $M$.
\end{proof}

\begin{prop}\label{Theorem 1.13(1)e} Let $f:M\rightarrow M^{\prime}$ be a graded $R$-isomorphism. If $K$ is a graded semi-essential $R$-submodule of $M$, then $f(K)$ is a graded semi-essential $R$-submodule of $M^{\prime}$.
\end{prop}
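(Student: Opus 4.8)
The plan is to deduce this statement from Proposition \ref{Theorem 1.13e} by applying that result to the inverse map $f^{-1}$. Since $f\colon M\to M'$ is a graded $R$-isomorphism, its inverse $f^{-1}\colon M'\to M$ is again a graded $R$-homomorphism: for each $g\in G$ one has $f(M_{g})=M'_{g}$ (the inclusion $f(M_{g})\subseteq M'_{g}$ holds because $f$ is graded, and the reverse inclusion follows from surjectivity together with uniqueness of the homogeneous decomposition), whence $f^{-1}(M'_{g})=M_{g}$, so $f^{-1}$ is graded; being bijective, it is a graded $R$-epimorphism. Moreover $f^{-1}$ is injective, so $Ker(f^{-1})=\{0\}$, and the trivial submodule is contained in $Grad_{M'}(M')$ (indeed $Grad_{M'}(M')=M'$, since no proper graded prime submodule contains the whole module); thus the hypothesis $Ker(f^{-1})\subseteq Grad_{M'}(M')$ of Proposition \ref{Theorem 1.13e} is automatically satisfied.

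Next I would invoke Proposition \ref{Theorem 1.13e} with the epimorphism $f^{-1}\colon M'\to M$ and the graded semi-essential $R$-submodule $K$ of $M$. The proposition then yields that $(f^{-1})^{-1}(K)$ is a graded semi-essential $R$-submodule of $M'$. Since $f$ is a bijection, $(f^{-1})^{-1}(K)=f(K)$, which is exactly the desired conclusion.

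Alternatively, a direct argument runs as follows. Let $Q$ be any nonzero graded prime $R$-submodule of $M'$. Because $f$ is a graded isomorphism, $f^{-1}(Q)$ is a nonzero graded prime $R$-submodule of $M$ (it is graded and prime by the preimage part of Lemma \ref{6} applied with the trivial kernel, and nonzero because $f$ is bijective and $Q\neq\{0\}$). Since $K$ is graded semi-essential in $M$, we get $K\bigcap f^{-1}(Q)\neq\{0\}$. Applying the injective map $f$ and using surjectivity in the form $f(f^{-1}(Q))=Q$, one obtains $f(K)\bigcap Q=f\big(K\bigcap f^{-1}(Q)\big)\neq\{0\}$. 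As $Q$ was an arbitrary nonzero graded prime submodule and $f(K)$ is a graded $R$-submodule of $M'$, it follows that $f(K)$ is graded semi-essential in $M'$.

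The steps are all routine; the only points requiring care are verifying that $f^{-1}$ is genuinely a graded homomorphism (equivalently that $f(M_{g})=M'_{g}$, not merely $\subseteq$) and that $f$ may be passed through the intersection, which is valid because $f$ is injective. Neither presents a real obstacle, so I expect the cleanest write-up to be the one-line reduction to Proposition \ref{Theorem 1.13e} via the inverse map.
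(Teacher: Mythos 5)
Your proposal is correct, and your ``alternative'' direct argument is precisely the paper's own proof: the paper takes a nonzero graded prime submodule $P$ of $M^{\prime}$, notes that $f^{-1}(P)$ is a graded prime $R$-submodule of $M$ (via Lemma \ref{6}, with trivial kernel), intersects it with $K$, and pushes the nonzero intersection forward through the injective $f$. Your preferred route --- applying Proposition \ref{Theorem 1.13e} to the inverse map $f^{-1}\colon M^{\prime}\rightarrow M$ --- is genuinely different and also valid, and your verifications are the right ones: $f(M_{g})=M^{\prime}_{g}$ (so $f^{-1}$ is graded) does need exactly the surjectivity-plus-unique-decomposition argument you sketch, and $Ker(f^{-1})=\{0\}$ makes the radical hypothesis automatic. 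One caution about that route: the hypothesis of Proposition \ref{Theorem 1.13e} is phrased via $Grad_{M}(M)$, which under the paper's stated convention equals $M$ (no graded prime submodule contains all of $M$, since graded prime submodules are proper), making the hypothesis vacuous for every epimorphism; yet the paper's proof of that proposition uses the inclusion $Grad_{M}(M)\subseteq P$ for each graded prime $P$, which only makes sense if $Grad_{M}(M)$ is read as the intersection of all graded prime submodules of $M$. Your application is immune to this ambiguity, because $Ker(f^{-1})=\{0\}$ lies in every graded prime submodule under either reading, but the reduction does inherit a proposition whose statement and proof are not quite aligned, whereas the direct argument is self-contained. A minor point in your direct version: the identity $f(K)\bigcap Q=f(K\bigcap f^{-1}(Q))$ actually holds without injectivity (since $f^{-1}(Q)$ is saturated); injectivity is what guarantees the image of the nonzero intersection is nonzero. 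In sum, the reduction buys a one-line deduction exhibiting the isomorphism case as a special case of the epimorphism result, while the direct proof (the paper's choice) avoids any dependence on the $Grad$ convention.
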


\begin{proof} Let $P$ be a nonzero graded prime $R$-submodule of $M^{\prime}$. Since $f$ is epimorphism, $f^{-1}(P)$ is a graded prime $R$-submodule of $M$ and then $K\bigcap f^{-1}(P)\neq\{0\}$ and hence $f(K)\bigcap P\neq\{0\}$ since $f$ is monomorphism. Hence, $f(K)$ is a graded semi-essential $R$-submodule of $M^{\prime}$.
\end{proof}

\begin{prop}\label{Theorem 2.1e} Let $M$ be a graded multiplication faithful $R$-module and $K$ a graded $R$-submodule of $M$. If $(K:_{R}M)$ is a graded semi-essential ideal of $R$, then $K$ is a graded semi-essential $R$-submodule of $M$.
\end{prop}

\begin{proof} Let $P$ be a graded prime $R$-submodule of $M$ such that $K\bigcap P=\{0\}$. Since $M$ is graded multiplication, $K=(K:_{R}M)M$ and $P=(P:_{R}M)M$. Now, $\{0\}=K\bigcap P=((K:_{R}M)M\bigcap(P:_{R}M)M)=((K:_{R}M)\bigcap(P:_{R}M))M$ and since $M$ is faithful, $(K:_{R}M)\bigcap(P:_{R}M)=\{0\}$. But $(K:_{R}M)$ is a graded semi-essential ideal of $R$ and $(P:_{R}M)$ is a graded prime ideal of $R$, so $(P:_{R}M)=\{0\}$ and hence $P=\{0\}$. Therefore, $K$ is a graded semi-essential $R$-submodule of $M$.
\end{proof}

\begin{defn} A nonzero graded $R$-module $M$ is said to be graded semi-uniform if every nonzero graded $R$-submodule of $M$ is graded semi-essential. A nonzero graded ring $R$ is said to be graded semi-uniform if every nonzero graded ideal of $R$ is graded semi-essential.
\end{defn}

Clearly, every graded uniform $R$-module is graded semi-uniform. The converse is not true in general as we see in the following example.

\begin{exa}\label{Example 4.2e} Consider Example \ref{Example 1.7e}, $M=\mathbb{Z}_{36}[i]$ is a graded semi-uniform $\mathbb{Z}$-module. But $M=\mathbb{Z}_{36}[i]$ is not a graded uniform $\mathbb{Z}$-module since $\langle18\rangle\bigcap\langle12\rangle=\{0\}$.
\end{exa}

It is known that the graded uniform property is hereditary. The graded semi-uniform property is not hereditary as we see in the following example.

\begin{exa}\label{Example 4.3e} Consider Example \ref{Example 1.7e}, $M=\mathbb{Z}_{36}[i]$ is a graded semi-uniform $\mathbb{Z}$-module and $K=\langle3\rangle$ is a graded $\mathbb{Z}$-submodule of $M$. The only graded prime $\mathbb{Z}$-submodules of $K$ are $P_{1}=\langle6\rangle$ and $P_{2}=\langle9\rangle$. But $N=\langle12\rangle$ is a graded $\mathbb{Z}$-submodule of $K$ with $N\bigcap P_{2}=\{0\}$. So, $K$ is not a graded semi-uniform $\mathbb{Z}$-module.
\end{exa}

\begin{prop}\label{Theorem 4.5e} Let $R$ be a graded semi-uniform ring. Then every graded multiplication faithful $R$-module is graded semi-uniform.
\end{prop}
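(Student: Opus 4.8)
The plan is to reduce the claim to Proposition \ref{Theorem 2.1e} by passing to the ideal $(K:_{R}M)$. First I would fix an arbitrary nonzero graded $R$-submodule $K$ of a graded multiplication faithful $R$-module $M$, and aim to show that $K$ is graded semi-essential; since $K$ is arbitrary, this establishes that $M$ is graded semi-uniform. Because $M$ is both graded multiplication and faithful, Proposition \ref{Theorem 2.1e} applies, so it suffices to prove that $(K:_{R}M)$ is a graded semi-essential ideal of $R$.

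Next I would bring in the hypothesis that $R$ is graded semi-uniform, which says precisely that every nonzero graded ideal of $R$ is graded semi-essential. Recall from Lemma \ref{4}'s surrounding material (the Farzalipour lemma stated at the start of the introduction) that $(K:_{R}M)$ is always a graded ideal of $R$. Therefore the only thing left to verify is that $(K:_{R}M)\neq\{0\}$, after which the graded semi-uniform hypothesis immediately gives that $(K:_{R}M)$ is graded semi-essential.

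For the nonvanishing of $(K:_{R}M)$ I would use the graded multiplication property directly: since $M$ is graded multiplication, $K=(K:_{R}M)M$. If we had $(K:_{R}M)=\{0\}$, then $K=\{0\}M=\{0\}$, contradicting $K\neq\{0\}$. Hence $(K:_{R}M)$ is a nonzero graded ideal, so it is graded semi-essential by hypothesis, and Proposition \ref{Theorem 2.1e} then yields that $K$ is a graded semi-essential $R$-submodule of $M$.

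There is no genuine obstacle here; the argument is a short chain of implications. The only point requiring care is the order of the two reductions: one must first establish that $(K:_{R}M)$ is nonzero (via the graded multiplication identity $K=(K:_{R}M)M$) before the graded semi-uniform hypothesis can be invoked, and only then can Proposition \ref{Theorem 2.1e} be applied. The faithfulness of $M$ is needed in Proposition \ref{Theorem 2.1e} itself rather than in this reduction, so it enters only implicitly.
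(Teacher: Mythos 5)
Your proposal is correct and follows essentially the same route as the paper: reduce to showing $(K:_{R}M)$ is a graded semi-essential ideal and then invoke Proposition \ref{Theorem 2.1e}. In fact you are slightly more careful than the paper, which silently assumes $(K:_{R}M)\neq\{0\}$ when applying the graded semi-uniform hypothesis; your verification via $K=(K:_{R}M)M$ fills in exactly that implicit step.
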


\begin{proof} Let $M$ be a graded multiplication faithful $R$-module and $K$ a nonzero graded $R$-submodule of $M$. Since $R$ is graded semi-uniform, $(K:_{R}M)$ is a graded semi-essential ideal of $R$ and then by Proposition \ref{Theorem 2.1e}, $K$ is a graded semi-essential $R$-submodule of $M$ and hence $M$ is a graded semi-uniform $R$-module.
\end{proof}

\end{document}